\documentclass[11pt,a4paper]{amsart}
\usepackage{latexsym}
\usepackage{graphicx}
\usepackage{subfig}
\usepackage{caption}
\usepackage{float}
\usepackage{enumerate}
\usepackage[top=3.2cm,bottom=3.8cm,left=3cm,right=2cm]{geometry}
\usepackage{mathrsfs}
\usepackage{amssymb}
\usepackage{amsbsy}
\usepackage{mathtools}
\usepackage{xcolor}
\usepackage{needspace}
\usepackage[colorlinks,linkcolor=blue,citecolor=blue,pagebackref]{hyperref}

\newtheorem{theorem}{Theorem}[section]
\newtheorem{lemma}[theorem]{Lemma}

\theoremstyle{definition}

\theoremstyle{remark}

\numberwithin{equation}{section}

\newcommand{\R}{\mathbb{R}}
\newcommand{\Sph}{\mathbb{S}}
\newcommand{\eps}{\varepsilon}
\newcommand{\Var}{\operatorname{Var}}

\newcommand{\Hess}{\nabla^2}
\newcommand{\HS}{\mathrm{HS}}
\newcommand{\Id}{\mathrm{Id}}
\newcommand{\E}{\mathbb{E}}
\newcommand{\II}{\mathrm{II}_K}
\newcommand{\tr}{\operatorname{tr}}

\makeatletter
\renewcommand{\section}{\@startsection{section}{1}%
	\z@{.7\linespacing\@plus\linespacing}{.5\baselineskip}%
	{\normalfont\scshape\centering}}
\renewcommand{\subsection}{\@startsection{subsection}{2}%
	\z@{.5\linespacing\@plus.7\linespacing}{.5\baselineskip}%
	{\normalfont\bfseries}}
\renewcommand{\subsubsection}{\@startsection{subsubsection}{3}%
	\z@{.5\linespacing\@plus.7\linespacing}{.5\baselineskip}%
	{\normalfont\itshape}}
\makeatother

\newcommand{\proofstep}[2]{%
	\par\Needspace{5\baselineskip}%
	\addvspace{\smallskipamount}%
	\noindent\textbf{Step #1.} #2\par
	\nobreak\vspace{.0\baselineskip}\nobreak
}
\begin{document}
	
	\begin{center}
		{\large\bf The Brunn--Minkowski inequality for the Gaussian measure}
	\end{center}
	
	\vskip 15pt
	\begin{center}
		{\small\bf Kai-Wen Yang}\\~~ \\
		\small{School of Mathematical Sciences, Key Laboratory of Intelligent Computing and Applications (Ministry of Education), Tongji University, Shanghai, 200092, China}
	\end{center}
	
	\vskip 5pt
	\begin{NoHyper}
		\footnotetext{E-mail address:  yangkaiwen@tongji.edu.cn.}
	\end{NoHyper}
	
	\begin{center}
		\begin{minipage}{14cm}
			{{\bf Abstract:} Let $\gamma_n$ be the standard Gaussian measure on $\R^n$, $n\ge2$, and let $\alpha_\gamma(n)$ be the largest number for which
				\[
				\gamma_n(\lambda K+(1-\lambda)L)^{\alpha_\gamma(n)} \ge \lambda\gamma_n(K)^{\alpha_\gamma(n)} +(1-\lambda)\gamma_n(L)^{\alpha_\gamma(n)}
				\]
				holds for all convex bodies $K,L\subset\R^n$ containing the origin and all $\lambda\in[0,1]$.
				In this paper, we prove that
				\[
				\alpha_\gamma(n) =1-\frac{2}{n-1} \frac{\Gamma(\frac n2)^2}{\Gamma(\frac{n-1}{2})^2}.
				\]
				The core of the proof is a raywise radial--tangential localization of the Hessian energy of the solution of a Neumann problem, which reduces source selection of the Neumann problem to a one-dimensional optimization.
				Monotonicity in the segment length and Laguerre spectral analysis determine the sharp one-dimensional value, whereas the planar endpoint is treated separately.
			}
			
			\vskip 5pt{{\bf 2020 Mathematics Subject Classification:} 52A40, 60E15, 35J20.}
			
			\vskip 5pt{{\bf Keywords:} Gaussian measure; Brunn--Minkowski inequality; convex bodies.}
		\end{minipage}
	\end{center}
	
	\vskip 20pt
	
	\section{\bf Introduction}
	
	The setting for this paper is the $n$-dimensional Euclidean space, $\mathbb{R}^n$. Let $V_n$ be the $n$-dimensional Lebesgue measure. The classical Brunn--Minkowski inequality asserts that for all nonempty Borel sets $K, L \subseteq \mathbb{R}^n$ and all $\lambda \in [0,1]$, 
	$$
	V_n(\lambda K+(1-\lambda) L)^\frac{1}{n} \geq \lambda V_n(K)^\frac{1}{n}+ (1-\lambda) V_n(L)^\frac{1}{n}.
	$$
	Here, $$\lambda K+(1-\lambda) L=\left\{\lambda x+(1-\lambda) y: x \in K, y \in L\right\}$$ is the Minkowski combination of $K$ and $L$. 
	
	The Brunn--Minkowski inequality is one of the cornerstones of convex geometry. See, e.g., books by Gardner \cite{Gardner1} and Schneider \cite{Sch}.  
	It is also closely related to
	many other fundamental inequalities, such as the isoperimetric inequality, the Pr\'ekopa--Leindler inequality, the Sobolev inequality and the Brascamp--Lieb inequality. See, e.g., Barthe \cite{Barthe} and Bobkov and Ledoux \cite{Bobkov1,Bobkov2}. We refer to the survey by Gardner \cite{gar1} for more.
	
	In 2010, Gardner and Zvavitch \cite{Gardner} asked whether the standard Gaussian distribution $\gamma_n$, i.e., the measure with density $\frac{d \gamma_n}{d x}=\frac{1}{(2 \pi)^{n / 2}} e^{-\frac{|x|^2}{2}}$, satisfies
	\[
	\gamma_n(\lambda K+(1-\lambda)L)^{\frac{1}{n}} \ge \lambda\gamma_n(K)^{\frac{1}{n}} +(1-\lambda)\gamma_n(L)^{\frac{1}{n}}
	\] 
	for all $\lambda\in (0,1)$ and all convex bodies $K$, $L$ containing the origin? 
	In 2013, a counterexample to this problem was constructed by Nayar and Tkocz  \cite{Nayar}. 
	
	In 2021,  Kolesnikov and Livshyts \cite{Livshyts} proved that if $K$ and $L$ are  convex bodies containing the origin in $\mathbb{R}^n$ and  $\lambda \in[0,1]$, then
	$$
	\gamma_n(\lambda K+(1-\lambda) L)^{\frac{1}{2n}} \geq \lambda \gamma_n(K)^{\frac{1}{2n}}+(1-\lambda) \gamma_n(L)^{\frac{1}{2n}} .$$ 
	Later, Eskenazis and Moschidis \cite{Moschidis} settled the Gardner-Zvavitch problem when  $K$ and $L$ are \emph{origin-symmetric} convex bodies: If $K$ and $L$ are origin-symmetric convex bodies in $\mathbb{R}^n$ and  $\lambda \in[0,1]$, then
	\begin{equation}\label{gaussSys}
		\gamma_n(\lambda K+(1-\lambda) L)^{\frac{1}{n}} \geq \lambda \gamma_n(K)^{\frac{1}{n}}+(1-\lambda) \gamma_n(L)^{\frac{1}{n}} .
	\end{equation}
	In 2025, Aishwarya and Li \cite{Aishwarya} gave an optimal-transport proof of \eqref{gaussSys} and established related entropic and functional inequalities.
	Aishwarya and Rotem \cite[Theorem~1.3]{rotem} extended the Gaussian $\frac1{2n}$-concavity inequality to star bodies with respect to the origin.
	
	For $n\ge2$, let $\mathcal K_o^n$ be the class of convex bodies in $\R^n$ containing the origin and define
	$\alpha_\gamma(n)$ as the largest number for which
	\[
	\gamma_n(\lambda K+(1-\lambda)L)^{\alpha_\gamma(n)} \ge \lambda\gamma_n(K)^{\alpha_\gamma(n)} +(1-\lambda)\gamma_n(L)^{\alpha_\gamma(n)}
	\]
	holds for all convex bodies $K,L\in\mathcal K_o^n$ and all $\lambda\in[0,1]$. Recently, 
	Xiong and Yang \cite[Theorem~1.1]{XiongYang2026} gave that
	\[
	\frac1n e^{\frac n2}\Bigl(\frac n2\Bigr)^{\frac n2}\Gamma\Bigl(1-\frac n2,\frac n2\Bigr)
	\le\alpha_\gamma(n)\le 1-\frac{2}{n-1}\frac{\Gamma(\frac n2)^2}{\Gamma(\frac{n-1}{2})^2},
	\]
	where $\Gamma(a,x)=\int_x^\infty t^{a-1}e^{-t}\,dt$ is the upper incomplete gamma function. In this paper, we further determine  $\alpha_\gamma(n)$. 
	
	\begin{theorem}\label{thm:main}
		Let $\alpha_\gamma(n)$ be the largest number for which
		\[
		\gamma_n(\lambda K+(1-\lambda)L)^{\alpha_\gamma(n)} \ge \lambda\gamma_n(K)^{\alpha_\gamma(n)} +(1-\lambda)\gamma_n(L)^{\alpha_\gamma(n)}
		\]
		holds for all convex bodies $K,L\subset\R^n$ containing the origin and all $\lambda\in[0,1]$. Then
		\[
		\alpha_\gamma(n)=q_n:=1-\frac{2}{n-1}\frac{\Gamma(\frac n2)^2}{\Gamma(\frac{n-1}{2})^2}.
		\]
	\end{theorem}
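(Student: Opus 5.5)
The upper bound $\alpha_\gamma(n)\le q_n$ is already available from Xiong and Yang \cite[Theorem~1.1]{XiongYang2026}, so the task is the lower bound: for all $K,L\in\mathcal K_o^n$ and $\lambda\in[0,1]$, the function $t\mapsto\gamma_n(K_t)^{q_n}$ is concave along Minkowski combinations $K_t=(1-t)K+tL$. I would follow the Kolesnikov--Livshyts infinitesimal scheme.

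\textbf{Reduction to a spectral inequality.} By standard approximation it suffices to treat $C^{2}_+$ bodies $K$ containing the origin in the interior and smooth perturbations $K_t=\{x:h_{K_t}=h_K+tf\}$. Concavity of $\gamma_n(K_t)^{q}$ at $t=0$ for all $f$ is equivalent to
\[
\gamma_n(K)\,\frac{d^2}{dt^2}\gamma_n(K_t)\le(1-q)\Bigl(\frac{d}{dt}\gamma_n(K_t)\Bigr)^2 .
\]
Writing the second variation through the Gaussian Neumann problem $Lu=\Delta u-\langle x,\nabla u\rangle=c$ in $K$ with $\partial_\nu u=f$ on $\partial K$, as in \cite{Livshyts}, and using the Reilly-type formula for the Gaussian measure, the inequality becomes a lower bound of the form
\[
\int_K\bigl(\|\Hess u\|_{\HS}^2+|\nabla u|^2\bigr)\,d\gamma_n+\int_{\partial K}\II(\nabla_{\partial K}u,\nabla_{\partial K}u)\,d\gamma_{n}\ge\cdots\ \ge q\,\frac{(\int_K Lu\,d\gamma_n)^2}{\gamma_n(K)} ,
\]
together with the boundary term $\langle x,\nu\rangle u_\nu^2$, which is nonnegative because $o\in K$. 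The problem is therefore to find the smallest constant in this Poincaré/Brascamp--Lieb type estimate for constant-Laplacian solutions, uniformly over convex $K\ni o$.

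\textbf{Raywise localization.} Since $o\in K$, I would write $K$ in polar coordinates, $x=r\theta$ with $0\le r\le\rho_K(\theta)$, so that $d\gamma_n$ becomes $r^{n-1}e^{-r^2/2}\,dr\,d\theta$. I would then split $\|\Hess u\|^2\ge u_{rr}^2+\frac{(\Delta_{\tan}u)^2}{n-1}$ into its radial part and the trace of its tangential part; the tangential trace involves $u_r/r$ plus angular terms. Discarding the nonnegative angular terms and using Cauchy--Schwarz reduces the estimate to a one-dimensional problem on each ray $[0,\rho]$: minimize
\[
\int_0^\rho\Bigl(v'^2+\frac{(v/r)^2}{1}\cdot(n-1)+v^2\Bigr)r^{n-1}e^{-r^2/2}\,dr
\]
subject to a prescribed value of $\int_0^\rho (v'+\frac{n-1}{r}v-rv)\,r^{n-1}e^{-r^2/2}\,dr$, with $v=u_r$. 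The source selection, that is, the choice of constant $c$, becomes part of this one-dimensional optimization. Integrating over $\theta$ with Jensen/Cauchy--Schwarz then recovers the global inequality provided the one-dimensional constant is uniform in $\rho$.

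\textbf{The one-dimensional sharp constant (main obstacle).} I expect the hardest step to be showing that the worst ratio, over $\rho\in(0,\infty]$, is attained as $\rho\to\infty$ and equals $1-q_n=\frac{2}{n-1}\Gamma(\frac n2)^2/\Gamma(\frac{n-1}2)^2$. The approach is in two parts. First, prove that the optimal ratio is monotone in the segment length $\rho$, via a variational derivative in $\rho$ whose sign is controlled by the Euler--Lagrange ODE. Second, on the half-line, change variables to $s=r^2/2$. The operator then becomes a Laguerre operator with parameter $\frac n2-1$, so its eigenfunctions are Laguerre polynomials and the constrained minimum reduces to a projection onto the constants. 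The resulting constant is a ratio of Gamma values, namely $\E|G|$-type moments $\Gamma(\frac n2)/\Gamma(\frac{n-1}2)$, which should match $q_n$, consistent with the extremal examples of Xiong--Yang (half-spaces through the origin degenerating). Finally, for $n=2$ the tangential term $\frac1{n-1}$ has no slack and the Cauchy--Schwarz step may lose too much, so the planar endpoint needs a separate direct argument, treating the angular Fourier modes of $u$ explicitly.
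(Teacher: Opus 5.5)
Your proposal has a genuine gap at its core: you keep the source constant. In $Lu=c$, $u_{\nu_K}=f$, the constant is not at your disposal, because compatibility forces $c\,\gamma_n(K)=\int_{\partial K}f\,d\gamma_{\partial K}$. So there is no ``source selection'' left, and a constant source cannot reach $q_n$.

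Even with a correct raywise reduction, a constant source only yields the one-dimensional value $\inf_v\Phi_{m,R}(1,v)=\inf_v\E_{m,R}[v'^2+v^2+\frac1m(1+Tv)^2]$. For $m>1$ this is strictly below $\lambda_m(R)$. The saddle source $g_*=1+\frac{1}{m-1}(Tv_*-\E_{m,R}Tv_*)$ is nonconstant, because $Tz_{m,R}$ is strictly increasing, and $\Phi_{m,R}(g,v_*)=\Phi_{m,R}(g_*,v_*)-\frac{m-1}{m}\E_{m,R}(g-g_*)^2$.

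The loss persists on the half-line. For $n=3$ the Laguerre computation gives about $0.213$ instead of $q_3=1-\frac\pi4\approx0.2146$. Moreover, in thin cones, where the extremal configuration lives, functions of the form $U(x_n)+\frac12\phi(x_n)|x'|^2$ (corrected to have exactly constant $Lu$) essentially realize this one-dimensional value. So the constant-source criterion itself fails at $q_n$.

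The missing idea is that the Reilly bound $\mathcal B_K(f)\le\int_Kg^2\,d\gamma_n-\int_K(\|\Hess u\|_{\HS}^2+|\nabla u|^2)\,d\gamma_n$ holds for every compatible $g\in L^2(K,\gamma_n)$. Hence $g$ may depend on $f$ and be chosen ray by ray. This gives the criterion of Lemma~\ref{lem:nonconstant-criterion}. Its variance term $-\int_K(g-\bar g_K)^2\,d\gamma_n$ turns the raywise problem into the max--min $\lambda_m(R)=\sup_g\inf_v\Phi_{m,R}(g,v)$, whose maximizing $g$ is nonconstant. That supremum over $g$ is exactly what produces $\kappa_{n-1}=q_n$.

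Your localization step is also invalid as written. The tangential trace of $\Hess u$ is $\frac mr u_r+r^{-2}\Delta_{\Sph^{n-1}}u$, and the angular part enters linearly inside a square, so it cannot be ``discarded''. The resulting inequality $\|\Hess u\|_{\HS}^2\ge u_{rr}^2+\frac{m}{r^2}u_r^2$ is false. For $u=r^2\cos(\sqrt2\,\varphi)$ in the plane, at $(r,\varphi)=(1,0)$ the left side is $4$ and the right side is $8$. The paper never decomposes the trace: because $Lu=g$, it equals exactly $g+ru_r-u_{rr}$, which involves only the source and $v=u_r$ on the ray (Lemma~\ref{lem:polar-hessian}).

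Moreover, your constraint satisfies $\int_0^\rho(v'+\frac mr v-rv)w_m\,dr=w_m(\rho)v(\rho)$, a pure endpoint flux. This is essentially the radial-function problem. Its normalized ratio is at least a constant times $W_{m,\rho}/w_m(\rho)$, which tends to $+\infty$ as $\rho\to\infty$. So neither the monotonicity in $\rho$ nor the Laguerre half-line limit you anticipate can come out of it.

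Finally, for $n=2$ the Cauchy--Schwarz step is an identity, so it loses nothing. The real planar difficulty is that the quadratic terms in $g$ cancel in $\frac1m\E(g+Tv)^2-\Var(g)$ when $m=1$. This forces $Tv$ to be constant, and the paper handles it by a constrained problem together with the limit $m\downarrow1$.
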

	
	For the lower bound, we follow the local-to-global route of Kolesnikov and Livshyts \cite{Livshyts} in a streamlined and more flexible form; see Section~\ref{sec:local-reduction}.
	A fixed-source version of this framework was used in \cite{XiongYang2026} for their lower bound, whereas here the compatible source in $Lu=g$, with $L=\Delta-\langle x,\nabla\rangle$, may depend on the boundary datum.
	
	The core idea of the proof is to localize this source-selection problem ray by ray.
	Put $m=n-1$.
	At $x=r\theta$, the radial direction is normal to the sphere $r\Sph^{n-1}$, and the remaining $m$ directions form the tangential block of $\Hess u$.
	Since $Lu=g$, applying Cauchy--Schwarz only to the trace of this block gives
	\[
	\|\Hess u\|_{\HS}^2+|\nabla u|^2
	\ge u_{rr}^2+u_r^2+\frac{(g+ru_r-u_{rr})^2}{m}.
	\]
	See Lemma \eqref{lem:polar-hessian}.
	Polar integration now turns the original $n$-dimensional local estimate into a family of independent weighted problems on the segments $(0,\rho_K(\theta))$.
	In particular, the freedom to choose a source in $K$ becomes a one-dimensional source optimization on each ray.
	This raywise localization is therefore the step that exposes the sharp constant.
	
	This geometric localization principle is not intrinsically Gaussian.
	For another smooth rotationally invariant measure, polar disintegration still separates the rays, while the radial density changes the local energy and the resulting one-dimensional operator.
	Thus, once an appropriate local Neumann--Reilly criterion is available, the same mechanism should lead to measure-dependent one-dimensional source optimizations.
	This suggests a route beyond the Gaussian setting, although the sharp spectral computation below is Gaussian-specific.
	
	For the Gaussian measure, the resulting one-dimensional problem on a segment of length $R$ has max--min value $\lambda_m(R)$. See definition for \eqref{eq:lambda-definition}. 
	For $m>1$, we identify its saddle point, prove that $\lambda_m(R)$ is nonincreasing in $R$, and evaluate its half-line limit by Laguerre spectral analysis:
	\[
	\lim_{R\to\infty}\lambda_m(R)=1-\frac2m\frac{\Gamma(\frac{m+1}{2})^2}{\Gamma(\frac m2)^2}.
	\]
	The degenerate endpoint $m=1$ is treated separately.
	Finally, measurable maximizing profiles are assembled along the rays of $K$, giving the sharp local estimate and hence the lower bound.
	The matching upper bound was already obtained by Xiong and Yang \cite[Theorem~1.1]{XiongYang2026}; see Section~\ref{sec:upper-bound} for details.
	
	The paper is organized as follows.
	Section~\ref{sec:prelim} collects some facts will be used later.
	Section~\ref{sec:local-reduction} derives a lower bound for $\alpha_\gamma(n)$ by refining the analytic route of Kolesnikov and Livshyts \cite{Livshyts}.
	Section~\ref{sec:upper-bound} recalls the matching upper bound of Xiong and Yang.
	Section~\ref{sec:one-dim} solves the one-dimensional variational problem.
	Section~\ref{lift} constructs the source ray by ray and proves Theorem~\ref{thm:main}.
	Appendix~\ref{app:explicit-profiles} records closed formulas for the maximizing profiles; these formulas are not needed for the proof of the theorem.
	
	\vskip3pt \noindent{\bf Declaration on the use of AI}: The solution of the one-dimensional variational problem in Section 5 and the appendix are revised versions of arguments generated by GPT 5.6 Sol and subsequently checked and rewritten by the author.
	GPT 5.6 Sol was also used to identify a gap in the proof of Lemma \ref{lem:profile-dependence} in an earlier version of the manuscript. GPT-5.6 Sol was then used to polish the writing.
	
	\vskip 20pt
	
	\section{\bf Preliminaries}\label{sec:prelim}
	
	Throughout the paper $n\ge2$.
	In Subsection~\ref{ssec:radial-tools} and Section~\ref{sec:one-dim}, $m\ge1$ is an independent real parameter; elsewhere $m=n-1$.
	
	We write $\langle\cdot,\cdot\rangle$ and $|\cdot|$ for the Euclidean inner product and norm.
	The Euclidean Hessian is denoted by $\Hess u$, and $\|\cdot\|_{\HS}$ is the Hilbert--Schmidt norm.
	The unit Euclidean ball and sphere are denoted by $B_2^n$ and $\Sph^{n-1}$.
	The spherical Hessian is denoted by $\nabla_{\Sph^{n-1}}^2$.
	We write $\mathcal H^k$ for $k$-dimensional Hausdorff measure and $\Id$ for the identity on the relevant tangent space.
	
	\subsection{Convex bodies and Gaussian variations}
	
	A convex body in $\R^n$ is a compact convex set with nonempty interior.
	Its support function is
	\[
	h_K(\theta)=\max_{x\in K}\langle x,\theta\rangle, \qquad \theta\in\Sph^{n-1}.
	\]
	For convex bodies $K,L$ and $\alpha,\beta\ge0$,
	\[
	h_{\alpha K+\beta L}=\alpha h_K+\beta h_L.
	\]
	
	A convex body belongs to $\mathcal C^2_+$ if its boundary is a $C^2$ hypersurface with everywhere positive Gauss curvature.
	We write $\mathcal K_{+,o}^n$ for the members of $\mathcal C^2_+$ containing the origin in their interiors.
	For $h\in C^2(\Sph^{n-1})$, its spherical curvature matrix is
	\[
	Q(h)=\nabla_{\Sph^{n-1}}^2h+h\Id \quad\text{on }T\Sph^{n-1}.
	\]
	A convex body $K$ belongs to $\mathcal C^2_+$ precisely when $h_K\in C^2(\Sph^{n-1})$ and $Q(h_K)$ is positive definite; moreover, $0\in\operatorname{int}K$ precisely when $\min_{\Sph^{n-1}}h_K>0$.
	These standard support-function facts may be found in \cite{Sch}.
	
	If $0\in\operatorname{int}K$, its radial function is
	\[
	\rho_K(\theta)=\max\{r\ge0:r\theta\in K\}, \qquad \theta\in\Sph^{n-1}.
	\]
	It is continuous and strictly positive.
	Equivalently, the inclusions $aB_2^n\subset K\subset bB_2^n$ for some $0<a\le b<\infty$ give $a\le\rho_K\le b$ on the sphere.
	
	Let $K\in\mathcal K_{+,o}^n$ and let $\nu_K:\partial K\to\Sph^{n-1}$ be the Gauss map.
	It is a $C^1$ diffeomorphism, and
	\[
	x_K(\theta)=\nabla_{\Sph^{n-1}}h_K(\theta)+h_K(\theta)\theta
	\]
	is its inverse parametrization; in these coordinates $Q(h_K)$ is the inverse Weingarten map.
	Thus, if $K,L\in\mathcal K_{+,o}^n$, then $Q((1-t)h_K+th_L)>0$ for $0\le t\le1$.
	Likewise, for $\psi\in C^2(\Sph^{n-1})$, compactness gives both $Q(h_K+s\psi)>0$ and $\min(h_K+s\psi)>0$ when $|s|$ is small.
	Hence $h_K+s\psi$ is the support function of a body $K_s\in\mathcal K_{+,o}^n$.
	
	Let $\II(X,Y)=\langle D_X\nu_K,Y\rangle$ be the second fundamental form with respect to the outer unit normal.
	With this convention $\II$ is positive definite.
	By compactness, $\II\ge c_K\Id$ on $\partial K$ for some $c_K>0$, so $\II^{-1}$ is continuous and bounded.
	We write $\nabla_{\partial K}$ for the tangential gradient and put
	\[
	d\gamma_{\partial K} =(2\pi)^{-\frac n2}e^{-\frac{|x|^2}{2}}\,d\mathcal H^{n-1}(x), \qquad H_\gamma=\tr(\II)-\langle x,\nu_K(x)\rangle.
	\]
	Set $f=\psi\circ\nu_K\in C^1(\partial K)$.
	The following variational formulas were obtained by Kolesnikov and Milman \cite[proof of Theorem~6.6]{milman1}.
	\begin{equation}\label{eq:var}
		\frac{d}{ds}\gamma_n(K_s)\Big|_{s=0}=\int_{\partial K}f\,d\gamma_{\partial K}, \qquad \frac{d^2}{ds^2}\gamma_n(K_s)\Big|_{s=0}=\int_{\partial K}\Bigl(H_\gamma f^2-\bigl\langle\II^{-1}\nabla_{\partial K}f,\nabla_{\partial K}f\bigr\rangle\Bigr)d\gamma_{\partial K}.
	\end{equation}
	These formulas generalize the formulas obtained by Colesanti \cite{Colesanti} in the Euclidean space.
	
	For later use, if $f\in C^1(\partial K)$, define the boundary quadratic form
	\[
	\mathcal B_K(f) =\int_{\partial K} \Bigl( H_\gamma f^2 -\bigl\langle\II^{-1}\nabla_{\partial K}f, \nabla_{\partial K}f\bigr\rangle \Bigr)d\gamma_{\partial K}.
	\]
	
	\subsection{Functional analytic tools}
	\label{ssec:functional-tools}
	
	For an interval $I\subset(0,\infty)$ and a positive weight $w$ that is bounded above and away from zero on every compact subinterval of $I$, the space $H^1(I,w\,dr)$ consists of the functions in $H^1_{\mathrm{loc}}(I)$ for which
	\[
	\int_I (|v|^2+|v'|^2)w\,dr<\infty.
	\]
	It is a Hilbert space with the evident norm.
	We shall repeatedly use the following standard theorem; see \cite[Corollary~5.8]{Brezis2011}.
	
	\begin{theorem}[Lax--Milgram]
		\label{thm:lax-milgram}
		Let $X$ be a real Hilbert space and let $a:X\times X\to\R$ be bilinear.
		If there are constants $M<\infty$ and $c>0$ such that
		\[
		|a(u,v)|\le M\|u\|_X\|v\|_X, \qquad a(v,v)\ge c\|v\|_X^2 \quad(u,v\in X),
		\]
		then, for every $\ell\in X^*$, there is a unique $u\in X$ satisfying
		\[
		a(u,v)=\ell(v)\quad(v\in X), \qquad \|u\|_X\le c^{-1}\|\ell\|_{X^*}.
		\]
		If $a$ is symmetric, this $u$ is also the unique minimizer of
		\[
		v\longmapsto \frac12a(v,v)-\ell(v).
		\]
	\end{theorem}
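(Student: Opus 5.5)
We prove this by reducing it to the Riesz representation theorem. Throughout, $\langle\cdot,\cdot\rangle_X$ denotes the inner product of $X$.

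First, for fixed $u\in X$, the map $v\mapsto a(u,v)$ is linear and bounded by $M\|u\|_X\|v\|_X$. By Riesz, there is a unique $Au\in X$ with $a(u,v)=\langle Au,v\rangle_X$ for all $v$. Bilinearity makes $A:X\to X$ linear, and $\|Au\|_X\le M\|u\|_X$. Likewise, Riesz gives a unique $f\in X$ with $\ell(v)=\langle f,v\rangle_X$ and $\|f\|_X=\|\ell\|_{X^*}$. The problem therefore becomes: solve $Au=f$ uniquely, with the stated bound.

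Second, we use coercivity: $c\|v\|_X^2\le a(v,v)=\langle Av,v\rangle_X\le\|Av\|_X\|v\|_X$, so
\[
\|Av\|_X\ge c\|v\|_X \qquad (v\in X).
\]
This has three consequences.
\begin{enumerate}[(i)]
\item $A$ is injective.
\item The range of $A$ is closed: if $Au_k\to y$, then $(u_k)$ is Cauchy, hence $u_k\to u$, and continuity of $A$ gives $Au=y$.
\item The range of $A$ is dense: if $w\perp AX$, then $0=\langle Aw,w\rangle_X\ge c\|w\|_X^2$, so $w=0$.
\end{enumerate}
Hence $A$ is bijective, and $u=A^{-1}f$ is the unique solution of $a(u,v)=\ell(v)$ for all $v$. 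Moreover $c\|u\|_X\le\|Au\|_X=\|f\|_X=\|\ell\|_{X^*}$, which is the stated estimate. An alternative is the contraction-map route: $u\mapsto u-\varepsilon(Au-f)$ is a contraction for $\varepsilon<2c/M^2$. We prefer the closed-range argument because it gives the estimate directly.

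Third, for the symmetric case, set $J(v)=\frac12a(v,v)-\ell(v)$. For the solution $u$ and any $w\in X$, symmetry and bilinearity give
\[
J(u+w)=J(u)+\bigl(a(u,w)-\ell(w)\bigr)+\tfrac12a(w,w)=J(u)+\tfrac12a(w,w)\ge J(u)+\tfrac c2\|w\|_X^2 .
\]
So $u$ is a minimizer, and the minimizer is unique because the last inequality is strict for $w\neq0$.

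The only step that needs care is the second one, specifically surjectivity. Injectivity with a lower bound does not by itself give an onto map in infinite dimensions, so both closedness of the range and its density, each derived from coercivity, are needed.
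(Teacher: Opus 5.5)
Your proof is correct, but it does not follow the route behind the paper's statement. The paper gives no argument of its own and cites Brezis, Corollary~5.8, where Lax--Milgram is the case $K=X$ of Stampacchia's theorem.

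In that source, after the same Riesz reduction to $Au=f$, one shows that $v\mapsto P_K\bigl(v-\rho(Av-f)\bigr)$ is a strict contraction for $0<\rho<2c/M^2$. With $K=X$ the projection is the identity, so this is precisely the alternative you mention. In the symmetric case, $a$ is used as an equivalent inner product, $\ell$ is represented through it, and $u$ is identified with a projection and hence with the minimizer of $\frac12a(v,v)-\ell(v)$.

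You instead obtain bijectivity of $A$ directly from the lower bound $\|Av\|_X\ge c\|v\|_X$: the range is closed, and its orthogonal complement is trivial. You then treat the symmetric case by the exact expansion $J(u+w)=J(u)+\frac12a(w,w)$ at the solution $u$.

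The cited route buys generality, since the contraction-with-projection argument solves variational inequalities on arbitrary nonempty closed convex sets, not just the linear equation on all of $X$. Yours is shorter and purely linear. It also yields the estimate $\|u\|_X\le c^{-1}\|\ell\|_{X^*}$ within the argument. In the contraction route that estimate has to be added separately, e.g.\ by testing with $v=u$: $c\|u\|_X^2\le a(u,u)=\ell(u)\le\|\ell\|_{X^*}\|u\|_X$.
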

	
	We next recall the operator terminology needed below.
	A densely defined operator $A$ on a real Hilbert space $\mathcal H$ is self-adjoint if $A=A^*$, including equality of their domains, and it is nonnegative if $\langle Au,u\rangle_{\mathcal H}\ge0$ for every $u\in\operatorname{Dom}(A)$.
	If $D:\operatorname{Dom}(D)\subset\mathcal H\to\mathcal H$ is densely defined, its Hilbert-space adjoint is defined by
	\[
	\operatorname{Dom}(D^*) =\Bigl\{g\in\mathcal H: h\longmapsto\langle Dh,g\rangle_{\mathcal H} \text{ is bounded in the $\mathcal H$-norm}\Bigr\}.
	\]
	For $g$ in this domain, the bounded functional extends uniquely to $\mathcal H$, and the Riesz representation theorem determines the unique vector $D^*g$ satisfying
	\[
	\langle Dh,g\rangle_{\mathcal H} =\langle h,D^*g\rangle_{\mathcal H} \qquad(h\in\operatorname{Dom}(D)).
	\]
	The adjoint is closed; if $D$ is closed, then $D^*$ is also densely defined.
	
	A densely defined nonnegative symmetric form $\mathfrak q$ is closed if its domain is complete for $\bigl(\|u\|_{\mathcal H}^2+\mathfrak q(u,u)\bigr)^{\frac12}$.
	An operator $A$ is called associated with $\mathfrak q$ if, for $u\in\operatorname{Dom}(\mathfrak q)$ and $f\in\mathcal H$,
	\[
	u\in\operatorname{Dom}(A),\quad Au=f \quad\Longleftrightarrow\quad \mathfrak q(u,h)=\langle f,h\rangle_{\mathcal H} \quad(h\in\operatorname{Dom}(\mathfrak q)).
	\]
	The following existence theorem fixes the self-adjoint realization used on the half-line.
	
	\begin{theorem}[Self-adjoint realization]
		\label{thm:self-adjoint-realization}
		Let $\mathcal H$ be a real Hilbert space and let $D:\operatorname{Dom}(D)\subset\mathcal H\to\mathcal H$ be densely defined and closed.
		Define the form
		\[
		\mathfrak q(u,h)=\langle Du,Dh\rangle_{\mathcal H}, \qquad \operatorname{Dom}(\mathfrak q)=\operatorname{Dom}(D).
		\]
		It is densely defined, nonnegative, and closed, and it has a unique associated nonnegative self-adjoint operator.
		This operator is $D^*D$, with
		\[
		\operatorname{Dom}(D^*D) =\{u\in\operatorname{Dom}(D):Du\in\operatorname{Dom}(D^*)\}.
		\]
		For every $\lambda>0$ and $f\in\mathcal H$, there is a unique $u\in\operatorname{Dom}(D)$ satisfying
		\[
		\mathfrak q(u,h)+\lambda\langle u,h\rangle_{\mathcal H} =\langle f,h\rangle_{\mathcal H} \quad(h\in\operatorname{Dom}(D))
		\]
		and this solution belongs to $\operatorname{Dom}(D^*D)$ and equals $(D^*D+\lambda)^{-1}f$.
	\end{theorem}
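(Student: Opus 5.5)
The plan is to reduce everything to the graph norm of $D$ and to the Lax--Milgram theorem (Theorem~\ref{thm:lax-milgram}). Throughout, write $X=\operatorname{Dom}(D)$ with the graph inner product
\[
\langle u,h\rangle_X=\langle u,h\rangle_{\mathcal H}+\langle Du,Dh\rangle_{\mathcal H}.
\]

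\textbf{The form.} The form $\mathfrak q$ is densely defined because $D$ is. It is nonnegative since $\mathfrak q(u,u)=\|Du\|_{\mathcal H}^2\ge0$. It is closed because the form norm $\bigl(\|u\|_{\mathcal H}^2+\mathfrak q(u,u)\bigr)^{1/2}$ is exactly the graph norm of $D$. The graph norm is complete on $X$ precisely because $D$ is closed, so $X$ is a Hilbert space.

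\textbf{Identification of the associated operator.} By definition, the association relation fixes both $\operatorname{Dom}(A)$ and the value $Au$. Hence there is at most one operator associated with $\mathfrak q$, which gives uniqueness. I then check that $D^*D$ satisfies the defining equivalence.
\begin{itemize}
\item Suppose $u\in X$ and $\mathfrak q(u,h)=\langle f,h\rangle_{\mathcal H}$ for all $h\in X$. Then $h\mapsto\langle Dh,Du\rangle_{\mathcal H}=\langle h,f\rangle_{\mathcal H}$ is bounded in the $\mathcal H$-norm. So $Du\in\operatorname{Dom}(D^*)$ and $D^*Du=f$.
\item Conversely, suppose $Du\in\operatorname{Dom}(D^*)$ with $D^*Du=f$. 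The definition of the adjoint gives $\langle Dh,Du\rangle_{\mathcal H}=\langle h,f\rangle_{\mathcal H}$ for all $h\in X$.
\end{itemize}
Therefore the associated operator is $A=D^*D$, with the stated domain. It is nonnegative because $\langle Au,u\rangle_{\mathcal H}=\|Du\|_{\mathcal H}^2$. It is symmetric because $\langle Au,v\rangle_{\mathcal H}=\langle Du,Dv\rangle_{\mathcal H}$.

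\textbf{Resolvent equation.} Fix $\lambda>0$ and apply Theorem~\ref{thm:lax-milgram} on $X$ to the bilinear form
\[
a(u,h)=\mathfrak q(u,h)+\lambda\langle u,h\rangle_{\mathcal H},
\]
with the functional $\ell(h)=\langle f,h\rangle_{\mathcal H}$. The form $a$ is bounded with $M=\max(1,\lambda)$ and coercive with $c=\min(1,\lambda)$. The functional satisfies $|\ell(h)|\le\|f\|_{\mathcal H}\|h\|_X$. This yields a unique $u\in X$ with $a(u,h)=\ell(h)$ for all $h\in X$. Rewriting, $u$ satisfies the association relation for $\mathfrak q$ with datum $f-\lambda u$. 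By the previous step, $u\in\operatorname{Dom}(D^*D)$ and $(D^*D+\lambda)u=f$. In particular $D^*D+\lambda$ is surjective. It is also injective, since $\langle(D^*D+\lambda)u,u\rangle_{\mathcal H}\ge\lambda\|u\|_{\mathcal H}^2$. Hence $u=(D^*D+\lambda)^{-1}f$.

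\textbf{Density and self-adjointness.} This is the step requiring the most care, because equality of domains must be shown, not just symmetry.
\begin{itemize}
\item \emph{Density.} Let $g\perp\operatorname{Dom}(A)$ and set $u=(A+1)^{-1}g$. Then
\[
0=\langle g,u\rangle_{\mathcal H}=\|Du\|_{\mathcal H}^2+\|u\|_{\mathcal H}^2,
\]
so $u=0$ and hence $g=(A+1)u=0$. Thus $\operatorname{Dom}(A)$ is dense.
\item \emph{Self-adjointness.} Symmetry gives $A\subset A^*$, so it remains to show $\operatorname{Dom}(A^*)\subset\operatorname{Dom}(A)$. Let $v\in\operatorname{Dom}(A^*)$. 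By surjectivity, pick $u\in\operatorname{Dom}(A)$ with $(A+1)u=(A^*+1)v$. For every $w\in\operatorname{Dom}(A)$, symmetry of $A$ gives
\[
\langle(A+1)w,v\rangle_{\mathcal H}=\langle w,(A^*+1)v\rangle_{\mathcal H}=\langle w,(A+1)u\rangle_{\mathcal H}=\langle(A+1)w,u\rangle_{\mathcal H}.
\]
Since $A+1$ has range all of $\mathcal H$, this forces $v=u\in\operatorname{Dom}(A)$.
\end{itemize}
Hence $A=A^*$, and $D^*D$ is a nonnegative self-adjoint operator.

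Finally, $D^*$ is closed, and the remark that $D^*$ is densely defined when $D$ is closed is not needed for any of the steps above.
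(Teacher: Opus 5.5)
Your proof is correct, but it obtains self-adjointness by a different route from the paper. The paper proves closedness and the Lax--Milgram resolvent step as you do. For existence, uniqueness and self-adjointness of the associated operator, however, it cites Kato's first representation theorem and then reads off from the defining identity that this operator is $D^*D$.

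You reverse the order:
\begin{itemize}
\item you first check directly that $D^*D$ satisfies the association relation;
\item you use Lax--Milgram to get surjectivity of $D^*D+\lambda$;
\item from that surjectivity plus symmetry you derive both density of $\operatorname{Dom}(D^*D)$ and $\operatorname{Dom}(A^*)\subset\operatorname{Dom}(A)$.
\end{itemize}
This is in effect a self-contained proof of von Neumann's theorem that $D^*D$ is self-adjoint. The paper's route is shorter and fits the result into the general closed-form framework. Yours uses only tools the paper already states, namely Lax--Milgram and the definition of the adjoint.

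One small point concerns uniqueness. Your claim that there is at most one associated operator tacitly assumes $\operatorname{Dom}(A)\subset\operatorname{Dom}(\mathfrak q)$. That is part of Kato's definition, but the paper's definition, read literally, only constrains $\operatorname{Dom}(A)\cap\operatorname{Dom}(\mathfrak q)$. Non-self-adjoint extensions of $D^*D$ whose extra domain lies outside $\operatorname{Dom}(D)$ would then also count as associated. Uniqueness among self-adjoint operators, which is what the statement asserts, still follows from what you proved. Any self-adjoint associated $A$ extends $D^*D$, so $A=A^*\subset(D^*D)^*=D^*D$, and hence $A=D^*D$.
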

	
	\begin{proof}
		The graph norm makes $\operatorname{Dom}(D)$ complete, so $\mathfrak q$ is closed.
		The representation theorem gives a unique associated nonnegative self-adjoint operator.
		Its defining identity says precisely that $Du\in\operatorname{Dom}(D^*)$ and that the operator acts as $D^*Du$; hence it is $D^*D$.
		See \cite[Chapter~VI, Theorems~2.1 and~2.6, and Example~2.13]{Kato1995}.
		
		For $\lambda>0$, the shifted form is continuous and coercive in the graph norm because
		\[
		\|Du\|_{\mathcal H}^2+\lambda\|u\|_{\mathcal H}^2 \ge \min\{1,\lambda\} \bigl(\|Du\|_{\mathcal H}^2+\|u\|_{\mathcal H}^2\bigr).
		\]
		Theorem~\ref{thm:lax-milgram} gives a unique weak solution.
		Rewriting its identity as $\mathfrak q(u,h)=\langle f-\lambda u,h\rangle_{\mathcal H}$ shows, by the definition of the associated operator, that $u\in\operatorname{Dom}(D^*D)$ and $D^*Du=f-\lambda u$.
	\end{proof}
	
	Thus Lax--Milgram constructs the shifted weak solution, while the form representation theorem supplies the self-adjoint operator and identifies that solution with its resolvent.
	When the operator has a complete orthogonal eigenbasis, the spectral theorem diagonalizes the resolvent.
	
	\subsection{The Gaussian Neumann problem and the weighted Reilly formula}
	
	Fix $K\in\mathcal K_{+,o}^n$.
	Throughout this subsection the Sobolev spaces on $K$ and $\partial K$ are the standard unweighted spaces.
	Since $K$ is compact, the Gaussian density is smooth and bounded above and below by positive constants on $K$; the corresponding weighted and unweighted interior and boundary norms are equivalent.
	
	The Ornstein--Uhlenbeck operator is
	\[
	Lu=\Delta u-x\cdot\nabla u =e^{\frac{|x|^2}{2}}\operatorname{div}(e^{-\frac{|x|^2}{2}}\nabla u).
	\]
	Consequently, for smooth $u$ and $\varphi$,
	\[
	\int_K(Lu)\varphi\,d\gamma_n =-\int_K\langle\nabla u,\nabla\varphi\rangle\,d\gamma_n +\int_{\partial K}u_{\nu_K}\varphi\,d\gamma_{\partial K}.
	\]
	
	Let $g\in L^2(K,\gamma_n)$ and $f\in H^{\frac12}(\partial K)$.
	Consider
	\begin{equation}\label{eq:neumann}
		\begin{cases}
			Lu=g & \text{in }K,\\
			u_{\nu_K}=f & \text{on }\partial K.
		\end{cases}
	\end{equation}
	Here $u_{\nu_K}=\langle\nabla u,\nu_K\rangle$.
	A function $u\in H^1(K)$ is a weak solution of \eqref{eq:neumann} if
	\begin{equation}\label{eq:weak-neumann}
		\int_K\langle\nabla u,\nabla\varphi\rangle d\gamma_n =\int_{\partial K}f\varphi\,d\gamma_{\partial K} -\int_K g\varphi\,d\gamma_n, \qquad \varphi\in H^1(K).
	\end{equation}
	The associated compatibility condition is
	\begin{equation}\label{eq:compatibility}
		\int_K g\,d\gamma_n =\int_{\partial K}f\,d\gamma_{\partial K}.
	\end{equation}
	Put $\rho(x)=e^{-\frac{|x|^2}{2}}$.
	Since $\rho I$ is uniformly positive on $K$, the variational Neumann theorem for strongly elliptic operators \cite[Theorem~4.11]{McLean2000}, applied to
	\[
	-\operatorname{div}(\rho\nabla u)=-\rho g\quad\text{in }K,\qquad \rho\,\partial_{\nu_K}u=\rho f\quad\text{on }\partial K,
	\]
	shows that \eqref{eq:compatibility} is sufficient, that the solution is unique modulo constants, and that its Gaussian mean-zero representative satisfies
	\begin{equation}\label{eq:H1-estimate}
		\|u\|_{H^1(K)}\le C_K\bigl(\|g\|_{L^2(K,\gamma_n)}+\|f\|_{H^{-\frac12}(\partial K)}\bigr).
	\end{equation}
	
	Following \cite[\S2.2]{milman1}, let $\mathcal S_N(K)$ denote the class of functions
	\[
	u\in C^2_{\mathrm{loc}}(K^\circ)\cap C^1(K), \qquad u_{\nu_K}\in C^1(\partial K).
	\]
	This is the natural class for the generalized Reilly formula on a $C^2$ domain.
	
	We shall use the following standard elliptic regularity result.
	If $\Omega\subset\R^n$ is a bounded $C^2$ domain, $F\in L^2(\Omega)$, $b\in H^{\frac12}(\partial\Omega)$, and $U\in H^1(\Omega)$ is a weak solution of
	\[
	\Delta U=F\quad\text{in }\Omega, \qquad \partial_\nu U=b\quad\text{on }\partial\Omega,
	\]
	then $U\in H^2(\Omega)$ and
	\[
	\|U\|_{H^2(\Omega)} \le C_\Omega\bigl( \|F\|_{L^2(\Omega)}+ \|b\|_{H^{\frac12}(\partial\Omega)}+ \|U\|_{L^2(\Omega)}\bigr).
	\]
	This follows from the shifted Neumann estimate \cite[Corollary~2.2.2.6]{Grisvard1985}, applied to $-\Delta U+U=-F+U$.
	The $L^2$ term is necessary because Neumann solutions are defined only modulo constants.
	
	\begin{lemma}
		\label{lem:gaussian-neumann}
		Problem \eqref{eq:neumann} has a weak solution if and only if \eqref{eq:compatibility} holds.
		Whenever it exists, the solution is unique modulo constants.
		Its Gaussian mean-zero representative belongs to $H^2(K)$ and satisfies
		\begin{equation}\label{eq:H2-estimate}
			\|u\|_{H^2(K)} \le C_K\bigl(\|g\|_{L^2(K,\gamma_n)} +\|f\|_{H^{\frac12}(\partial K)}\bigr).
		\end{equation}
		If, in addition, $g\in C^\alpha(K)$ for some $\alpha\in(0,1)$ and $f\in C^1(\partial K)$, then $u\in\mathcal S_N(K)$.
	\end{lemma}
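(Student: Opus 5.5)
Necessity of \eqref{eq:compatibility} comes from testing \eqref{eq:weak-neumann} with $\varphi\equiv1\in H^1(K)$. The left side vanishes, which leaves $\int_{\partial K}f\,d\gamma_{\partial K}=\int_K g\,d\gamma_n$. Sufficiency, uniqueness modulo constants, and the $H^1$ bound \eqref{eq:H1-estimate} for the Gaussian mean-zero representative are already given by the McLean variational Neumann theorem applied to the divergence-form problem $-\operatorname{div}(\rho\nabla u)=-\rho g$, $\rho\,\partial_{\nu_K}u=\rho f$, as recorded just above the lemma. In particular, the difference of two weak solutions $w$ satisfies $\int_K|\nabla w|^2d\gamma_n=0$ (take $\varphi=w$). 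Since $K$ is connected, $w$ is constant. So the only real work is $H^2$ regularity and the classical regularity.

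For $H^2$, I would rewrite the equation in Laplacian form. Since $u\in H^1(K)$ and $x$ is bounded on $K$, the term $F:=g+x\cdot\nabla u$ lies in $L^2(K)$, with $\|F\|_{L^2}\le C(\|g\|_{L^2(K,\gamma_n)}+\|u\|_{H^1})$. I then check that $u$ is a weak solution of $\Delta u=F$ in $K$, $\partial_{\nu}u=f$ on $\partial K$, in the unweighted sense.

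\emph{Weight transfer.} For $\psi\in H^1(K)$, put $\varphi=\psi/\rho\cdot(2\pi)^{n/2}$. This is in $H^1(K)$ because $1/\rho$ is smooth and bounded on $K$. Insert it into \eqref{eq:weak-neumann}. The identity $\nabla\varphi=(2\pi)^{n/2}\rho^{-1}(\nabla\psi+\psi x)$ turns the Gaussian weak formulation into
\[
\int_K\langle\nabla u,\nabla\psi\rangle\,dx=\int_{\partial K}f\psi\,d\mathcal H^{n-1}-\int_K(g+x\cdot\nabla u)\psi\,dx.
\]
This is exactly the unweighted weak Neumann problem for $\Delta u=F$.

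Now $\partial K$ is $C^2$, and $f\in H^{1/2}(\partial K)$. The quoted Grisvard estimate gives $u\in H^2(K)$ with
\[
\|u\|_{H^2}\le C(\|F\|_{L^2}+\|f\|_{H^{1/2}}+\|u\|_{L^2}).
\]
Combining this with \eqref{eq:H1-estimate} and the bound $\|f\|_{H^{-1/2}}\le C\|f\|_{H^{1/2}}$ yields \eqref{eq:H2-estimate}.

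The last claim, $u\in\mathcal S_N(K)$ when $g\in C^\alpha$ and $f\in C^1(\partial K)$, is where I expect the main obstacle: the boundary is only $C^2$, so Schauder theory up to the boundary must be used carefully.

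\emph{Interior regularity.} First bootstrap inside $K^\circ$. From $u\in H^2$, Sobolev embedding and iteration of $W^{2,p}$ interior estimates for $\Delta u=g+x\cdot\nabla u$ give $\nabla u\in L^p_{\mathrm{loc}}$ for all $p$, hence $\nabla u\in C^{\beta}_{\mathrm{loc}}$. Then $F\in C^{\min(\alpha,\beta)}_{\mathrm{loc}}$, and interior Schauder estimates give $u\in C^{2,\cdot}_{\mathrm{loc}}(K^\circ)$.

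\emph{Up to the boundary.} I would use $L^p$ Neumann theory on $C^{1,1}$ domains (Grisvard, Thm.~2.4.2.7) with datum $f\in C^1(\partial K)\subset W^{1-1/p,p}(\partial K)$ for every $p<\infty$. Bootstrapping $F=g+x\cdot\nabla u\in L^p$ through successive $p$ gives $u\in W^{2,p}(K)$ for all $p<\infty$. Morrey's embedding then yields $u\in C^{1,\beta}(K)\subset C^1(K)$. Finally, $u_{\nu_K}=f\in C^1(\partial K)$ holds in the trace sense, and this trace is now classical because $\nabla u$ is continuous up to $\partial K$. This completes membership in $\mathcal S_N(K)$.
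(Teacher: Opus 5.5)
Your treatment of solvability, uniqueness modulo constants, and the $H^2$ estimate is essentially the paper's. You test with $\varphi\equiv1$ for necessity and rely on the McLean variational Neumann theorem for the rest. You then insert $\rho^{-1}\psi$ to reach the unweighted Neumann problem with $F=g+x\cdot\nabla u$, and close with the Grisvard estimate, \eqref{eq:H1-estimate}, and $H^{\frac12}\hookrightarrow H^{-\frac12}$.

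The genuine difference is the last claim, and your route there is correct. The paper proves no regularity at that point. It invokes Kolesnikov--Milman's existence theorem, which directly supplies some solution $\widetilde u\in\mathcal S_N(K)$ of the same compatible problem. Uniqueness modulo constants then transfers membership in $\mathcal S_N(K)$ to the mean-zero $u$, since adding a constant preserves the class.

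You instead derive the regularity yourself. A $W^{2,p}$ bootstrap for the shifted problem $-\Delta u+u=-F+u$ on the $C^{1,1}$ domain, with $f\in C^1(\partial K)\subset W^{1-\frac1p,p}(\partial K)$, gives $u\in W^{2,p}(K)$ for every $p<\infty$. Hence $u\in C^{1,\beta}(K)$, and the normal derivative equals $f$ classically. Interior Schauder estimates then give $u\in C^2_{\mathrm{loc}}(K^\circ)$.

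What your approach buys:
\begin{itemize}
\item It is self-contained modulo standard elliptic theory.
\item It proves more than needed: $W^{2,p}$ regularity up to the boundary for all $p$.
\item It shows that the H\"older hypothesis on $g$ is used only for interior $C^2$ regularity; boundedness of $g$ suffices for the boundary part.
\item It makes transparent why a $C^2$ boundary is enough, even though $C^{2,\alpha}$ Schauder estimates up to the boundary would need a smoother boundary.
\end{itemize}
The paper's route is a two-line reduction, at the price of depending on an external theorem stated exactly for the class $\mathcal S_N(K)$.

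One step you should state explicitly is identifying, at each stage of the bootstrap, the $W^{2,p}$ solution provided by the $L^p$ theory with $u$ itself. Since $p\ge2$, that solution lies in $H^1(K)$. The shifted weak Neumann problem is coercive on $H^1(K)$, so its weak solution is unique and must equal $u$.
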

	
	\begin{proof}
		\proofstep{1}{Variational solvability.}

		Taking $\varphi=1$ in \eqref{eq:weak-neumann} proves the necessity of \eqref{eq:compatibility}.
		Conversely, the variational Neumann result following \eqref{eq:compatibility} gives existence, uniqueness modulo constants, and the estimate \eqref{eq:H1-estimate}.
		
		\proofstep{2}{The $H^2$ estimate.}

		For every $\psi\in H^1(K)$, the function $\rho^{-1}\psi$ belongs to $H^1(K)$ and
		\[
		\rho\nabla(\rho^{-1}\psi)=\nabla\psi+x\psi.
		\]
		Using $\rho^{-1}\psi$ in \eqref{eq:weak-neumann} and cancelling the Gaussian normalizing constant gives
		\[
		\int_K\langle\nabla u,\nabla\psi\rangle\,dx=\int_{\partial K}f\psi\,d\mathcal H^{n-1}-\int_K(g+x\cdot\nabla u)\psi\,dx.
		\]
		Therefore $u$ is a weak solution of the standard Neumann problem with $F=g+x\cdot\nabla u\in L^2(K)$ and boundary datum $f$.
		The equivalence of the weighted and unweighted norms and the estimate from Step~1 give
		\[
		\|F\|_{L^2(K)}+\|u\|_{L^2(K)}\le C_K\bigl(\|g\|_{L^2(K,\gamma_n)}+\|f\|_{H^{-\frac12}(\partial K)}\bigr).
		\]
		The standard Neumann estimate stated before the lemma and the embedding $H^{\frac12}(\partial K)\hookrightarrow H^{-\frac12}(\partial K)$ now yield
		\[
		\|u\|_{H^2(K)}\le C_K\bigl(\|F\|_{L^2(K)}+\|f\|_{H^{\frac12}(\partial K)}+\|u\|_{L^2(K)}\bigr)\le C_K\bigl(\|g\|_{L^2(K,\gamma_n)}+\|f\|_{H^{\frac12}(\partial K)}\bigr),
		\]
		which is \eqref{eq:H2-estimate}.
		
		\proofstep{3}{Smooth data.}

		If $g\in C^\alpha(K)$ and $f\in C^1(\partial K)$, \cite[Theorem~2.5]{milman1} supplies a solution $\widetilde u\in\mathcal S_N(K)$ of the same compatible Neumann problem.
		It is also a weak solution, so Step~1 shows that $u-\widetilde u$ is constant.
		Since adding a constant preserves $\mathcal S_N(K)$, the mean-zero solution $u$ belongs to $\mathcal S_N(K)$.
	\end{proof}
	
	For later use, we recall the generalized weighted Reilly formula.
	If $u\in\mathcal S_N(K)$, then
	\begin{align}
		\int_K(Lu)^2\,d\gamma_n ={}&\int_K\bigl(\|\Hess u\|_{\HS}^2+|\nabla u|^2\bigr)d\gamma_n \notag\\
		&+\int_{\partial K}\bigl(H_\gamma u_{\nu_K}^2-2\langle\nabla_{\partial K}u,\nabla_{\partial K}u_{\nu_K}\rangle+\langle\II\nabla_{\partial K}u,\nabla_{\partial K}u\rangle\bigr)d\gamma_{\partial K}.
		\label{eq:reilly}
	\end{align}
	This is the Gaussian specialization of \cite[Theorem~1.1]{milman2}; see also \cite[Theorem~2.1]{milman1}.
	
	\subsection{The radial half-line operator and its Laguerre resolution}
	\label{ssec:radial-tools}
	
	This subsection fixes the finite-energy self-adjoint realization at the singular endpoint $r=0$ and then diagonalizes it by Laguerre polynomials.
	Formal adjoints on finite intervals, including the boundary term at $r=R$, are introduced only in Section~\ref{sec:one-dim}.
	
	\subsubsection{The half-line realization.}
	
	Let $w_m(r)=r^m e^{-\frac{r^2}{2}}$, where $m\ge1$. Define the derivative operator in $L^2((0,\infty),w_m\,dr)$ by
	\[
	Dv=v', \qquad \operatorname{Dom}(D)=H^1((0,\infty),w_m\,dr).
	\]
	The operator is densely defined because $C_c^\infty(0,\infty)$ is dense in the weighted $L^2$ space, and it is closed because its graph norm is the weighted $H^1$ norm.
	Theorem~\ref{thm:self-adjoint-realization} therefore gives the nonnegative self-adjoint operator $D^*D$ generated by
	\[
	(v,h)\longmapsto\int_0^\infty v'h'w_m\,dr, \qquad v,h\in H^1((0,\infty),w_m\,dr).
	\]
	Here $D^*$ is the Hilbert-space adjoint.
	In particular, for every $f\in L^2((0,\infty),w_m\,dr)$, the resolvent $v=(D^*D+2)^{-1}f$ is the unique weighted $H^1$ solution of
	\[
	\int_0^\infty(v'h'+2vh)w_m\,dr =\int_0^\infty fhw_m\,dr \quad\bigl(h\in H^1((0,\infty),w_m\,dr)\bigr).
	\]
	The shift makes this form coercive even though constants lie in $\ker D$.
	
	For $v\in\operatorname{Dom}(D^*D)$, testing against $C_c^\infty(0,\infty)$ and integrating by parts gives the interior expression
	\[
	D^*D v=-v''-\frac{w_m'}{w_m}v' =-v''+\Bigl(r-\frac mr\Bigr)v'
	\]
	in the sense of distributions.
	
	The differential expression above describes only the interior action of $D^*D$.
	The behavior at the singular endpoint $0$ is determined by the finite-energy space
	\[
	V_m:=H^1((0,\infty),w_m\,dr),\qquad
	\|v\|_{V_m}^2:=\int_0^\infty(|v|^2+|v'|^2)w_m\,dr.
	\]
	We record two consequences of this choice.
	
	\noindent\textbf{First}. Functions in $V_m$ can be approximated by functions that vanish near $0$.
	
	Since $w_m(r)^{-1}\sim r^{-m}$ and $m\ge1$,
	\[
	\int_0^b w_m^{-1}\,dr=\infty
	\]
	for every $b>0$.
	For $0<\eps<b$, define $\chi_\eps=0$ on $(0,\eps]$, $\chi_\eps=1$ on $[b,\infty)$, and
	\[
	\chi_\eps'(r)=\frac{w_m(r)^{-1}}{\int_\eps^b w_m(t)^{-1}\,dt}.
	\]
	For every fixed $r\in(0,b)$,
	\[
	1-\chi_\eps(r)
	=\frac{\int_r^b w_m(t)^{-1}\,dt}{\int_\eps^b w_m(t)^{-1}\,dt}
	\longrightarrow0,
	\]
	and
	\[
	\int_0^\infty|\chi_\eps'|^2w_m\,dr=\Bigl(\int_\eps^b w_m^{-1}\,dr\Bigr)^{-1}\longrightarrow0.
	\]
	If $v\in V_m\cap L^\infty$, dominated convergence gives
	\[
	\|(\chi_\eps-1)v\|_{L^2(w_m\,dr)}
	+\|(\chi_\eps-1)v'\|_{L^2(w_m\,dr)}\longrightarrow0,
	\]
	while
	\[
	\|\chi_\eps'v\|_{L^2(w_m\,dr)}
	\le\|v\|_\infty
	\Bigl(\int_\eps^b w_m^{-1}\,dr\Bigr)^{-\frac12}
	\longrightarrow0.
	\]
	The product rule therefore gives $\chi_\eps v\to v$ in $V_m$.
	Lipschitz truncation reduces a general $v\in V_m$ to the bounded case.
	A cutoff at infinity and mollification then give the required approximation by functions in $C_c^\infty(0,\infty)$.
	
	Thus $C_c^\infty(0,\infty)$ is dense in $V_m$ in the form norm; this is the form-core property used below.
	On a finite interval $(0,R)$, choose $b<R$.
	Then $\chi_\eps v$ vanishes near $0$ but agrees with $v$ near $R$.
	Passing to the limit in a weak integration by parts therefore removes the boundary term at $0$ and retains the one at $R$.
	
	\noindent\textbf{Second.} Finite energy selects the regular ODE branch at $0$.
	
	Substituting $v(r)=r^\alpha$ into the most singular part
	\[
	-v''-\frac mr v'
	\]
	of $(D^*D+2)v=0$ gives the indicial equation
	\[
	\alpha(\alpha+m-1)=0.
	\]
	Thus the indicial roots are $0$ and $1-m$, with a double root when $m=1$.
	The root $0$ gives a bounded regular solution, while a linearly independent solution has leading behavior $r^{1-m}$ when $m>1$ and $\log r$ when $m=1$.
	For the latter solution,
	\[
	|v'(r)|^2w_m(r)\asymp r^{-m},
	\]
	and hence
	\[
	\int_0^\eps |v'|^2w_m\,dr=\infty.
	\]
	The regular solution has finite energy near $0$.
	Hence membership in $H^1((0,R),w_m\,dr)$ forces the coefficient of the nonregular homogeneous solution to vanish.
	For an inhomogeneous equation, it likewise excludes any nonregular homogeneous term added to a particular solution.
	Thus ``the regular branch at the origin'' means exactly the branch selected by finite energy; it is not an additional boundary condition.
	
	\subsubsection{Laguerre diagonalization}
	
	Set
	\[
	\nu=\frac{m+1}{2},\qquad s=\frac{r^2}{2},\qquad d\eta_\nu(s)=\frac{s^{\nu-1}e^{-s}}{\Gamma(\nu)}\,ds,
	\]
	and
	\[
	W_{m,\infty}:=\int_0^\infty w_m(r)\,dr =2^{\nu-1}\Gamma(\nu).
	\]
	After normalizing $w_m\,dr$ by $W_{m,\infty}$, define the unitary map
	\[
	(Uv)(s)=v(\sqrt{2s})
	\]
	onto $L^2(\eta_\nu)$.
	It transports the closed form according to
	\[
	\frac1{W_{m,\infty}}\int_0^\infty v'h'w_m\,dr
	=2\int_0^\infty s(Uv)'(Uh)'\,d\eta_\nu.
	\]
	Let
	\[
	A_\nu:=U(D^*D)U^{-1}.
	\]
	Then
	\[
	(D^*D+2)v=f
	\quad\Longleftrightarrow\quad
	(A_\nu+2)Uv=Uf.
	\]
	The operator $A_\nu$ is associated with the transformed form, and its interior expression is
	\begin{equation}\label{eq:radial-gamma-operator}
		A_\nu=-2\Bigl(s\frac{d^2}{ds^2} +(\nu-s)\frac d{ds}\Bigr).
	\end{equation}
	
	For $(a)_0=1$ and $(a)_k=a(a+1)\cdots(a+k-1)$, $k\ge1$, let
	\[
	L_k^\alpha(s) =\frac{s^{-\alpha}e^s}{k!} \frac{d^k}{ds^k}\bigl(e^{-s}s^{k+\alpha}\bigr), \qquad \alpha>-1.
	\]
	The following facts on $L_k^\alpha$ follow from \cite[\S\S18.3, 18.5, 18.6, 18.8, and 18.18(i)]{NISTDLMF} and some elementary calculations.
	
	\begin{lemma}
		\label{lem:standard-laguerre-facts}
		Let $m\ge1$ and $\nu=\frac{m+1}{2}$.
		Write $\ell_k=L_k^{\nu-1}$.
		The polynomials $\ell_k$ satisfy
		\[
		s\ell_k''+(\nu-s)\ell_k'+k\ell_k=0,
		\]
		form a complete orthogonal system in $L^2(\eta_\nu)$, and obey
		\[
		\ell_k(0)=\frac{(\nu)_k}{k!},
		\]
		and
		\begin{equation}\label{eq:laguerre-orthogonality-app}
			\int_0^\infty \ell_j\ell_k\,d\eta_\nu =\frac{(\nu)_k}{k!}\,\delta_{jk}.
		\end{equation}
		
		Consequently, every $F\in L^2(\eta_\nu)$ has the expansion
		\[
		F=\sum_{k=0}^\infty \widehat F_k\ell_k, \qquad
		\widehat F_k=\frac{k!}{(\nu)_k}\langle F,\ell_k\rangle_{L^2(\eta_\nu)},
		\]
		with convergence in $L^2(\eta_\nu)$, and
		\[
		\|F\|_{L^2(\eta_\nu)}^2 =\sum_{k=0}^\infty \frac{(\nu)_k}{k!}|\widehat F_k|^2.
		\]
		Moreover,
		\begin{equation}\label{eq:laguerre-half-moment-app}
			\int_0^\infty s^{-\frac12}\ell_k(s)\,d\eta_\nu(s)
			=\frac{\Gamma(\nu-\frac12)}{\Gamma(\nu)}\frac{(\frac12)_k}{k!}.
		\end{equation}
		The function $s^{-\frac12}$ belongs to $L^2(\eta_\nu)$ exactly when $\nu>1$.
	\end{lemma}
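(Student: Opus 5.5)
The statement to prove is Lemma~\ref{lem:standard-laguerre-facts}. Almost all of it is standard Laguerre theory, so the plan is to quote the DLMF facts for the first part and compute only the half-moment \eqref{eq:laguerre-half-moment-app}.

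\textbf{Step 1: the standard facts.} From the Rodrigues formula for $L_k^{\nu-1}$, with $\alpha=\nu-1>-1$ because $m\ge1$, I take the following directly from DLMF \S18.8 and \S18.5:
\begin{itemize}
\item the differential equation $s\ell_k''+(\nu-s)\ell_k'+k\ell_k=0$;
\item the value $\ell_k(0)=\binom{k+\nu-1}{k}=(\nu)_k/k!$.
\end{itemize}
Orthogonality is DLMF \S18.3: $\int_0^\infty L_jL_k s^{\nu-1}e^{-s}ds=\Gamma(k+\nu)/k!\,\delta_{jk}$. Dividing by $\Gamma(\nu)$ gives \eqref{eq:laguerre-orthogonality-app}, since $\Gamma(k+\nu)/\Gamma(\nu)=(\nu)_k$. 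Completeness of the polynomials in $L^2(\eta_\nu)$ is standard, because $\eta_\nu$ has exponential moments (DLMF \S18.18(i)). The expansion and the Parseval identity are then the usual consequences for a complete orthogonal system with norms $(\nu)_k/k!$.

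\textbf{Step 2: the half-moment.} I will use the explicit sum
\[
\ell_k(s)=\sum_{j=0}^k\frac{(\nu)_k}{(\nu)_j\,j!\,(k-j)!}(-s)^j .
\]
Integrating term by term gives
\[
\int_0^\infty s^{j-\frac12}\,d\eta_\nu=\frac{\Gamma(\nu+j-\frac12)}{\Gamma(\nu)}=\frac{\Gamma(\nu-\frac12)}{\Gamma(\nu)}\bigl(\nu-\tfrac12\bigr)_j .
\]
Hence the integral equals
\[
\frac{\Gamma(\nu-\frac12)}{\Gamma(\nu)}\frac{(\nu)_k}{k!}\sum_{j=0}^k\frac{(-k)_j\,(\nu-\frac12)_j}{(\nu)_j\,j!},
\]
using $(-1)^j k!/(k-j)!=(-k)_j$. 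The remaining sum is ${}_2F_1(-k,\nu-\frac12;\nu;1)$. By Chu--Vandermonde it equals $(\tfrac12)_k/(\nu)_k$, which gives exactly the claimed value. The only requirement is $\nu>\frac12$, so that each term is finite; this holds for $m\ge1$.

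An alternative route uses the generating function $\sum_k L_k^{\alpha}t^k=(1-t)^{-\nu}e^{-st/(1-t)}$. Integrating against $s^{-1/2}d\eta_\nu$ yields $\frac{\Gamma(\nu-\frac12)}{\Gamma(\nu)}(1-t)^{-1/2}$, and comparing coefficients gives the same formula. I would mention it as a check.

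\textbf{Step 3: the integrability criterion.} We have $\int s^{-1}\,d\eta_\nu=\Gamma(\nu)^{-1}\int s^{\nu-2}e^{-s}ds$. This is finite if and only if $\nu-2>-1$, that is, $\nu>1$.

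The only nonroutine point is the Chu--Vandermonde evaluation in Step~2, and that is a quotable identity. Everything else is citation.
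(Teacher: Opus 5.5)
Your proposal is correct and follows essentially the same route as the paper. The paper gives no written proof: it cites DLMF \S\S18.3, 18.5, 18.6, 18.8 and 18.18(i) for the standard facts and leaves the rest to ``elementary calculations.'' Your explicit-sum evaluation of the half-moment via Chu--Vandermonde, ${}_2F_1(-k,\nu-\tfrac12;\nu;1)=(\tfrac12)_k/(\nu)_k$, with the generating-function check, together with your criterion $\nu>1$ for $s^{-1/2}\in L^2(\eta_\nu)$, supplies exactly those calculations.
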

	
	\begin{lemma}[Laguerre representation of the resolvent]
		\label{lem:laguerre-resolvent}
		Let $F\in L^2(\eta_\nu)$ have the expansion
		\[
		F=\sum_{k=0}^\infty\widehat F_k\ell_k
		\]
		from Lemma~\ref{lem:standard-laguerre-facts}.
		Then the unique solution of
		\[
		(A_\nu+2)Z=F
		\]
		is
		\begin{equation}\label{eq:laguerre-resolvent-app}
			Z=(A_\nu+2)^{-1}F
			=\frac12\sum_{k=0}^\infty\frac{\widehat F_k}{k+1}\ell_k,
		\end{equation}
		with convergence in the graph norm of $A_\nu$.
		Equivalently, if $F=Uf$, then the unique solution of $(D^*D+2)v=f$ is $v=U^{-1}Z$.
	\end{lemma}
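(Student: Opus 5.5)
The plan is to diagonalize $A_\nu$ on the Laguerre basis and then identify the resulting series with the resolvent through the form characterization in Theorem~\ref{thm:self-adjoint-realization}.

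\textbf{Step 1: eigenfunctions.} By Lemma~\ref{lem:standard-laguerre-facts}, $s\ell_k''+(\nu-s)\ell_k'=-k\ell_k$. Comparing with \eqref{eq:radial-gamma-operator}, the interior expression gives $A_\nu\ell_k=2k\ell_k$. We must also check that $\ell_k$ lies in the operator domain, not merely that the differential expression matches. We do this directly from the associated form. Polynomials belong to the transformed form domain, because $U^{-1}\ell_k$ is a polynomial in $r^2$ and so lies in $V_m$. For a test function $h$ in the form domain, integrating by parts gives
\[
2\int_0^\infty s\,\ell_k'\,h'\,d\eta_\nu = 2k\int_0^\infty \ell_k h\,d\eta_\nu .
\]
The boundary term $s^{\nu}e^{-s}\ell_k'h$ vanishes at $0$ and at $\infty$. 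To justify this, first take $h$ in $C_c^\infty(0,\infty)$, which is legitimate by the form-core property proved above, and then pass to general $h$ by density. The identity then says exactly that $\ell_k\in\operatorname{Dom}(A_\nu)$ with $A_\nu\ell_k=2k\ell_k$.

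\textbf{Step 2: the resolvent on the basis.} Consequently $(A_\nu+2)\ell_k=2(k+1)\ell_k$. Since $(A_\nu+2)^{-1}$ is bounded, with norm at most $\tfrac12$ because $A_\nu\ge0$, it acts on each $\ell_k$ by multiplication by $\tfrac{1}{2(k+1)}$.

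\textbf{Step 3: passing to the full series.} Let $F_N$ be the partial sum of the expansion of $F$. By Step~2, $Z_N=(A_\nu+2)^{-1}F_N$ is the corresponding partial sum in \eqref{eq:laguerre-resolvent-app}. The Parseval identity of Lemma~\ref{lem:standard-laguerre-facts} gives $F_N\to F$ in $L^2(\eta_\nu)$. Boundedness of the resolvent then gives $Z_N\to Z:=(A_\nu+2)^{-1}F$ in $L^2$. We also have $A_\nu Z_N=F_N-2Z_N\to F-2Z$, so the convergence holds in the graph norm. Because $A_\nu$ is closed, $Z\in\operatorname{Dom}(A_\nu)$ and $(A_\nu+2)Z=F$. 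Uniqueness follows from $A_\nu\ge0$, which makes $A_\nu+2$ injective. The final equivalence is the unitary conjugation $A_\nu=U(D^*D)U^{-1}$ already recorded: $(D^*D+2)v=f$ holds if and only if $(A_\nu+2)Uv=Uf$.

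\textbf{Main obstacle.} The one delicate point is the domain verification in Step~1. The interior expression alone does not identify the self-adjoint realization at the singular endpoint $s=0$. I would handle it through the form identity together with the vanishing of the weighted boundary term. An alternative is the finite-energy selection argument from Subsection~\ref{ssec:radial-tools}: $\ell_k$ is the regular branch, and this determines its membership in the domain.
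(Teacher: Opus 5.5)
Your proposal is correct and follows essentially the same route as the paper. Both arguments verify $A_\nu\ell_k=2k\ell_k$ through the form identity, tested first on $C_c^\infty(0,\infty)$ and extended by the form-core property, and then diagonalize. The only difference is cosmetic: you obtain graph-norm convergence from the boundedness of $(A_\nu+2)^{-1}$ applied to the partial sums of $F$, whereas the paper bounds the Parseval tails of $Z-Z_N$ and $A_\nu(Z-Z_N)$ directly. Your appeal to the closedness of $A_\nu$ is redundant, since $Z$ is defined as the resolvent and so already lies in the domain.
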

	
	\begin{proof}
		Since $\ell_k$ and $\ell_k'$ are bounded near $0$ and $\nu\ge1$, they are integrable there in the transformed form norm; the exponential factor in $\eta_\nu$ gives integrability at infinity.
		Thus $\ell_k$ belongs to the transformed form domain.
		Its differential equation gives, first for $H\in C_c^\infty(0,\infty)$,
		\[
		2\int_0^\infty s\ell_k'H'\,d\eta_\nu
		=2k\int_0^\infty\ell_kH\,d\eta_\nu.
		\]
		The form-core property extends this identity to every test function in the transformed form domain.
		The weak-domain characterization therefore gives
		\[
		\ell_k\in\operatorname{Dom}(A_\nu),
		\qquad A_\nu\ell_k=2k\ell_k.
		\]
		
		Thus the equation $(A_\nu+2)Z=F$, expanded in the complete system $(\ell_k)$, becomes
		\[
		2(k+1)z_k=\widehat F_k.
		\]
		This gives the candidate series in \eqref{eq:laguerre-resolvent-app}.
		If $Z_N$ denotes its $N$th partial sum, Parseval gives
		\begin{align*}
			\|Z-Z_N\|_{L^2(\eta_\nu)}^2
			&=\frac14\sum_{k>N}\frac{(\nu)_k}{k!}\frac{|\widehat F_k|^2}{(k+1)^2}\longrightarrow0,\\
			\|A_\nu(Z-Z_N)\|_{L^2(\eta_\nu)}^2
			&=\sum_{k>N}\frac{(\nu)_k}{k!}\Bigl(\frac{k}{k+1}\Bigr)^2|\widehat F_k|^2\longrightarrow0.
		\end{align*}
		Hence the series converges in the graph norm, and its limit solves $(A_\nu+2)Z=F$.
		Uniqueness follows from the nonnegativity of $A_\nu$.
	\end{proof}
	
	\vskip 20pt
	
	\section{\bf The local reduction}\label{sec:local-reduction}
	
	In this part, we derive a lower bound for $\alpha_\gamma(n)$ by refining the analytic route of Kolesnikov and Livshyts \cite{Livshyts}.
	
	For smooth data, the following estimate is obtained by Kolesnikov and Livshyts \cite[the proof of Lemma~2.3]{Livshyts}, based on the weighted Reilly formula of Kolesnikov and Milman \cite[Theorem~1.1]{milman2}.
	The proof below passes from a smooth source and classical solution to $g\in L^2(K,\gamma_n)$ and $u\in H^2(K)$.
	
	\begin{lemma}
		Let $K\in\mathcal K_{+,o}^n$, and let $f\in C^1(\partial K)$ and $g\in L^2(K,\gamma_n)$ satisfy \eqref{eq:compatibility}.
		If $u\in H^2(K)$ is a solution of \eqref{eq:neumann}, then
		\begin{equation}\label{eq:reilly-bound}
			\mathcal B_K(f) \le \int_K g^2\,d\gamma_n -\int_K\bigl(\|\Hess u\|_{\HS}^2+|\nabla u|^2\bigr)d\gamma_n.
		\end{equation}
	\end{lemma}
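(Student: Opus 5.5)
First I establish the inequality for smooth data, then pass to $g\in L^2$ by approximation.

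\emph{Smooth case.} Suppose $g\in C^\alpha(K)$. By Lemma~\ref{lem:gaussian-neumann}, the mean-zero solution lies in $\mathcal S_N(K)$, so the weighted Reilly formula \eqref{eq:reilly} applies with $u_{\nu_K}=f$. Write the boundary integrand as $H_\gamma f^2-2\langle\nabla_{\partial K}u,\nabla_{\partial K}f\rangle+\langle\II\nabla_{\partial K}u,\nabla_{\partial K}u\rangle$. Since $\II$ is positive definite, completing the square pointwise gives
\[
\langle\II a,a\rangle-2\langle a,b\rangle\ge-\langle\II^{-1}b,b\rangle,
\]
which follows from $\langle\II(a-\II^{-1}b),a-\II^{-1}b\rangle\ge0$. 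With $a=\nabla_{\partial K}u$ and $b=\nabla_{\partial K}f$, the boundary integral is therefore at least $\mathcal B_K(f)$. Substituting into \eqref{eq:reilly} and using $Lu=g$ yields \eqref{eq:reilly-bound}. Adding a constant to $u$ changes neither side, so the bound holds for every solution.

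\emph{General $g$.} Let $g\in L^2(K,\gamma_n)$. Choose $g_k\in C^\infty(K)$ with $g_k\to g$ in $L^2(K,\gamma_n)$, and adjust each $g_k$ by adding the constant
\[
c_k=\gamma_n(K)^{-1}\Bigl(\int_{\partial K}f\,d\gamma_{\partial K}-\int_K g_k\,d\gamma_n\Bigr),
\]
so that \eqref{eq:compatibility} holds. Because $g$ itself satisfies compatibility, $c_k\to0$, and hence the adjusted $g_k$ still converge to $g$. Let $u_k$ be the mean-zero solutions with data $(g_k,f)$. The difference $u_k-u$, with $u$ normalized to be mean-zero, solves \eqref{eq:neumann} with data $(g_k-g,0)$. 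The estimate \eqref{eq:H2-estimate} then gives $u_k\to u$ in $H^2(K)$, so
\[
\int_K\bigl(\|\Hess u_k\|_{\HS}^2+|\nabla u_k|^2\bigr)d\gamma_n\to\int_K\bigl(\|\Hess u\|_{\HS}^2+|\nabla u|^2\bigr)d\gamma_n,
\]
using that the Gaussian density is bounded on $K$. Likewise $\int g_k^2\,d\gamma_n\to\int g^2\,d\gamma_n$. The left side $\mathcal B_K(f)$ does not depend on $k$. Passing to the limit in the smooth-case inequality gives \eqref{eq:reilly-bound}.

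\emph{Main obstacle.} The delicate point is the approximation step. One must ensure that the approximants remain compatible with the fixed boundary datum $f$, and that the solution $u$ in the hypothesis, which is only assumed to lie in $H^2$, coincides up to a constant with the limit of the $u_k$. The latter follows from uniqueness modulo constants in Lemma~\ref{lem:gaussian-neumann}, since an $H^2$ solution is in particular a weak solution. No regularity beyond $f\in C^1(\partial K)$ is needed, because $C^1(\partial K)\subset H^{1/2}(\partial K)$ and the datum $f$ is fixed throughout the approximation.
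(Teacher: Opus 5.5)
Your proposal is correct and follows essentially the same route as the paper. You prove the smooth case via Lemma~\ref{lem:gaussian-neumann}, the Reilly formula \eqref{eq:reilly}, and pointwise completion of the square using positivity of $\II$. You then approximate $g$ by smooth sources shifted by constants to keep compatibility, and apply \eqref{eq:H2-estimate} to the mean-zero difference $u_k-u$. The only differences are cosmetic: the paper uses restrictions of $C_c^\infty(\R^n)$ functions and normalizes $u_j$ by $\int_K(u_j-u)\,d\gamma_n=0$ rather than making both solutions mean-zero, which is equivalent because constants affect neither side.
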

	
	\begin{proof}
		First suppose that $g$ is the restriction to $K$ of an ambient $C^\infty$ function.
		Lemma~\ref{lem:gaussian-neumann} gives $u\in\mathcal S_N(K)$.
		By \eqref{eq:reilly}, we have
		\[
		\begin{aligned}
			\int_Kg^2\,d\gamma_n ={}&\int_K\bigl(\|\Hess u\|_{\HS}^2+|\nabla u|^2\bigr)d\gamma_n\\
			&+\int_{\partial K}\bigl(H_\gamma f^2-2\langle\nabla_{\partial K}u,\nabla_{\partial K}f\rangle+\langle\II\nabla_{\partial K}u,\nabla_{\partial K}u\rangle\bigr)d\gamma_{\partial K}.
		\end{aligned}
		\]
		
		Since $K\in\mathcal K_{+,o}^n$, $\II$ is positive definite, and therefore
		\[
		\langle\II \nabla_{\partial K}u, \nabla_{\partial K}u \rangle-2\langle \nabla_{\partial K}u,\nabla_{\partial K}f\rangle \ge-\langle\II^{-1}\nabla_{\partial K}f,\nabla_{\partial K}f\rangle.
		\]
		Consequently,
		\[
		\int_Kg^2\,d\gamma_n \ge \int_K\bigl(\|\Hess u\|_{\HS}^2+|\nabla u|^2\bigr)d\gamma_n +\int_{\partial K} \Bigl( H_\gamma f^2 -\bigl\langle\II^{-1}\nabla_{\partial K}f, \nabla_{\partial K}f\bigr\rangle \Bigr)d\gamma_{\partial K},
		\]
		which is \eqref{eq:reilly-bound}.
		
		For a general $g\in L^2(K,\gamma_n)$, choose $\widetilde g_j\in C_c^\infty(\R^n)$ with $\widetilde g_j|_K\to g$ in $L^2(K,\gamma_n)$ and write 
		\[
		g_j=\widetilde g_j +\frac1{\gamma_n(K)} \int_K(g-\widetilde g_j)\,d\gamma_n\quad \text{on}\ K.
		\]
		Then
		\[
		g_j\to g\quad\text{in }L^2(K,\gamma_n), \quad\text{as}\ j\to \infty, \quad \text{and} \quad \int_K g_j\,d\gamma_n=\int_K g\,d\gamma_n.
		\]
		For each $j$, let $u_j$ solve \eqref{eq:neumann} with $g$ replaced by $g_j$, and normalize it by
		\[
		\int_K(u_j-u)\,d\gamma_n=0.
		\]
		Such a solution exists because $g_j$ satisfies the same compatibility condition as $g$.
		Since $g_j$ is smooth, Lemma~\ref{lem:gaussian-neumann} gives $u_j\in\mathcal S_N(K)$.
		Moreover, $u_j-u$ has Gaussian mean zero and satisfies
		\[
		L(u_j-u)=g_j-g\quad\text{in }K\quad \text{and} \quad (u_j-u)_{\nu_K}=0\quad\text{on }\partial K.
		\]
		By \eqref{eq:H2-estimate}, 
		\[
		\|u_j-u\|_{H^2(K)} \le C_K\|g_j-g\|_{L^2(K,\gamma_n)}\to 0,\quad\text{as}\ j\to \infty.
		\]
		The smooth case, applied to $u_j$, gives
		\[
		\mathcal B_K(f)\le \int_Kg_j^2\,d\gamma_n-\int_K\bigl(\|\Hess u_j\|_{\HS}^2+|\nabla u_j|^2\bigr)d\gamma_n.
		\]
		Letting $j\to \infty$ gives \eqref{eq:reilly-bound}.
	\end{proof}

	The next lemma combines the Kolesnikov--Livshyts local-to-global argument \cite[Lemma~3.1 and Corollary~1]{Livshyts} with their nonconstant source formulation \cite[Definition~6.2 and Lemma~6.3]{Livshyts}.
	We include the arguments to verify the slightly more flexible formulation needed here: the source may depend on the boundary datum, no symmetry is assumed, and only $C^1$ boundary data, an $L^2$ source, and an $H^2$ solution are required.
	
	\begin{lemma}
		\label{lem:nonconstant-criterion}
		Let $q>0$.
		Assume that for every $K\in\mathcal K_{+,o}^n$ and every $f\in C^1(\partial K)$ there exists $g\in L^2(K,\gamma_n)$ satisfying \eqref{eq:compatibility} such that an $H^2$ solution $u$ of \eqref{eq:neumann} satisfies
		\begin{equation}\label{eq:variance-energy}
			\int_K\bigl(\|\Hess u\|_{\HS}^2+|\nabla u|^2\bigr)d\gamma_n -\int_K(g-\bar g_K)^2\,d\gamma_n \ge q\,\gamma_n(K)\bar g_K^2,
		\end{equation}
		where
		\[
		\bar g_K=\frac1{\gamma_n(K)}\int_K g\,d\gamma_n.
		\]
		Then
		\[
		\gamma_n((1-t)K+tL)^q \ge(1-t)\gamma_n(K)^q+t\gamma_n(L)^q,
		\]
		for all $K,L\in\mathcal K_{+,o}^n$ and $t\in[0,1]$.
	\end{lemma}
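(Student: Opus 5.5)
Set $K_t$ to be the body with support function $h_t=(1-t)h_K+th_L$; by the discussion after the definition of $\mathcal C^2_+$, $K_t\in\mathcal K_{+,o}^n$ for every $t\in[0,1]$, and $K_t=(1-t)K+tL$. Put $F(t)=\gamma_n(K_t)$. The claimed inequality says exactly that $F^q$ is concave on $[0,1]$. Since $F>0$ and $F$ is $C^2$ (smooth dependence of $h_t$ on $t$ together with \eqref{eq:var}), this is equivalent to
\[
F(t)F''(t)\le(1-q)F'(t)^2\qquad(t\in[0,1]).
\]
It suffices to check this at $t=t_0$ for each fixed $t_0$. Write $K=K_{t_0}$ and $\psi=h_L-h_K\in C^2(\Sph^{n-1})$. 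Then $K_{t_0+s}=K+s\psi$ in the sense of the preliminaries, and $f=\psi\circ\nu_K\in C^1(\partial K)$. By \eqref{eq:var},
\[
F'=\int_{\partial K}f\,d\gamma_{\partial K},\qquad F''=\mathcal B_K(f).
\]

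For this $K$ and this $f$, the hypothesis supplies a source $g$ satisfying \eqref{eq:compatibility} and an $H^2$ solution $u$ of \eqref{eq:neumann} obeying \eqref{eq:variance-energy}. The preceding lemma, through \eqref{eq:reilly-bound}, gives
\[
\mathcal B_K(f)\le\int_K g^2\,d\gamma_n-\int_K\bigl(\|\Hess u\|_{\HS}^2+|\nabla u|^2\bigr)d\gamma_n .
\]
Now expand the square,
\[
\int_K g^2\,d\gamma_n=\int_K(g-\bar g_K)^2\,d\gamma_n+\gamma_n(K)\bar g_K^2,
\]
and insert \eqref{eq:variance-energy}. This yields
\[
F''\le(1-q)\gamma_n(K)\bar g_K^2 .
\]
By compatibility, $\gamma_n(K)\bar g_K=\int_K g\,d\gamma_n=F'$, so the right-hand side equals $(1-q)F'^2/F$. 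This is the desired inequality at $t_0$.

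The remaining points are bookkeeping. First, the local inequality must be applied to the intermediate body $K_{t_0}$, not only to $K$ and $L$. This is allowed because the hypothesis is uniform over $\mathcal K_{+,o}^n$ and $C^1$ boundary data. Second, the sign convention of $\II$ in \eqref{eq:var} has to match the one in \eqref{eq:reilly-bound}; both are taken from Kolesnikov--Milman with the same convention. Third, $F$ must be $C^2$ in $t$ up to the endpoints, so that concavity of $F^q$ follows from $(F^q)''\le0$. The identity $(F^q)''=qF^{q-2}\bigl(FF''-(1-q)F'^2\bigr)$ then finishes the argument.

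I expect the main obstacle to be justifying that $F''(t_0)$ equals $\mathcal B_{K_{t_0}}(f)$ with $f\in C^1$. The difficulty is that $h_t$ is only $C^2$, so $\nu_{K}$ is only $C^1$. I would rely on \cite{milman1}, which states the formula under exactly $\mathcal C^2_+$ regularity. I would also note that the second derivative at $s=0$ of $s\mapsto\gamma_n(K+s\psi)$ coincides with $F''(t_0)$, because $t\mapsto h_t$ is affine.
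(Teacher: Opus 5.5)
Your proposal is correct and follows the paper's proof essentially step for step. You combine \eqref{eq:reilly-bound} with \eqref{eq:variance-energy} and compatibility to get $\gamma_n(K)\mathcal B_K(f)\le(1-q)\bigl(\int_{\partial K}f\,d\gamma_{\partial K}\bigr)^2$, apply this at every intermediate body $K_t$ via \eqref{eq:var} to obtain $FF''\le(1-q)F'^2$, and conclude that $F^q$ is concave. The only difference is minor. The paper works on the open interval $(0,1)$ and uses continuity of $F$ at the endpoints, so your claim of $C^2$ regularity up to $t=0,1$ is not needed: pointwise twice differentiability with $(F^q)''\le0$ already gives concavity.
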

	
	\begin{proof}
		Let $K,L\in\mathcal K_{+,o}^n$.
		By \eqref{eq:compatibility} and \eqref{eq:variance-energy},
		\[
		\int_K g^2\,d\gamma_n-\int_K\bigl(\|\Hess u\|_{\HS}^2+|\nabla u|^2\bigr)d\gamma_n\le(1-q)\gamma_n(K)\bar g_K^2=\frac{1-q}{\gamma_n(K)}\Bigl(\int_{\partial K}f\,d\gamma_{\partial K}\Bigr)^2.
		\]
		This, together with \eqref{eq:reilly-bound}, gives
		\begin{equation}\label{eq:local-q}
			\gamma_n(K)\mathcal B_K(f) \le(1-q) \Big(\int_{\partial K}f\,d\gamma_{\partial K}\Big)^2.
		\end{equation}
		
		Let $K_t=(1-t)K+tL$ and $F(t)=\gamma_n(K_t)$, $t\in [0,1]$. 
		Write $f_t=(h_L-h_K)\circ\nu_{K_t}$.
		Since $K,L\in\mathcal K_{+,o}^n$, we have  $h_K,h_L\in C^2(\Sph^{n-1})$ and $\nu_{K_t}$ is $C^1$, and therefore $f_t\in C^1(\partial K_t)$.
		Thus for $t\in(0,1)$, the variation formulas \eqref{eq:var} give
		\[
		F'(t)=\int_{\partial K_t}f_t\,d\gamma_{\partial K_t} \quad \text{and}\quad F''(t)=\mathcal B_{K_t}(f_t).
		\]
		Applying \eqref{eq:local-q} at $K_t$ yields that for all $t\in(0,1)$,
		\[
		F(t)F''(t)\le(1-q)F'(t)^2.
		\]
		Hence
		\[
		(F^q)''=qF^{q-2}\big(FF''+(q-1)F'^2\big)\le0.
		\]
		Thus $F^q$ is concave on $(0,1)$, and the continuity at the endpoints completes the proof.
	\end{proof}
	
	\vskip 20pt
	
	\section{\bf The upper bound}\label{sec:upper-bound}
	
	For the sake of completeness, we include the proof of the upper bound.
	The planar construction yielding the upper bound is due to Nayar and Tkocz \cite{Nayar}.
	Xiong and Yang \cite{XiongYang2026} extended the construction in higher dimensions.
	
	\begin{lemma}\label{lem:sharpness}
		\[
		\alpha_\gamma(n)\le q_n:=1-\frac{2}{n-1}\frac{\Gamma(\frac n2)^2}{\Gamma(\frac{n-1}{2})^2}.
		\]
	\end{lemma}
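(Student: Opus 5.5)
To prove the upper bound I will exhibit, for each $q>q_n$, a pair of convex bodies containing the origin for which the $q$-concavity fails. The natural family, following Nayar and Tkocz, is a family of thin bodies near the origin: take $K_\eps=\{x:\ x_1\ge -\eps\}\cap RB$-type truncated half-spaces, or more concretely the perturbation $K_t=K+t\,[0,e_1]$-type Minkowski combination in which one body is a half-space (truncated to a large ball) passing near the origin and the other is its translate. Concretely, I would set $F(t)=\gamma_n(\{x:\langle x,e_1\rangle\le t\}\cap RB_2^n)$ near $t=0$, or rather $F(t)=\gamma_n(P_t)$ where $P_t$ is a cone/wedge deformation for which $F(0)$ is small. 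The concavity of $F^q$ on an interval forces $F F''\le (1-q)F'^2$, so it suffices to find a one-parameter Minkowski-linear family $K_t=(1-t)K+tL$ with $0\in K_t$ and
\[
\frac{F(0)F''(0)}{F'(0)^2}\longrightarrow 1-q_n=\frac{2}{n-1}\frac{\Gamma(\frac n2)^2}{\Gamma(\frac{n-1}{2})^2}
\]
along a limiting sequence of bodies.

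The key step is to choose the family so that this ratio can be computed via the variational formulas \eqref{eq:var}. I will take $K$ a half-space $\{x_1\le 0\}$ intersected with a large ball (then smoothed into $\mathcal K_{+,o}^n$), and $L$ its translate by $\delta e_1$, so that $f\equiv$ const on the flat face and the flat face through the origin contributes the dominant terms. Better, following Xiong--Yang, I would use a ``thin slab'' limit: $K=\{|x_1|\le\eps\}$-like body with the origin on its boundary, where $F(0)\approx$ mass of a slab, $F'(0)$ is the boundary integral of the data, and $F''(0)$ involves $H_\gamma=-\langle x,\nu\rangle$ terms on the face. Writing $x=(x_1,y)$, with data $f(y)$ depending on the tangential variable $y\in\R^{n-1}$, the ratio reduces to a quotient of Gaussian integrals in $y$; choosing $f(y)=|y|$ (the support-function perturbation of a cone, which is Minkowski-linear) should give
\[
\frac{\int |y|^2\,d\gamma_{n-1}}{(\int |y|\,d\gamma_{n-1})^2}\cdot(\text{factor})
\]
and $\int|y|\,d\gamma_{n-1}=\sqrt2\,\Gamma(\frac n2)/\Gamma(\frac{n-1}2)$, $\int|y|^2d\gamma_{n-1}=n-1$, which produces exactly the constant $\frac{2}{n-1}\Gamma(\frac n2)^2/\Gamma(\frac{n-1}2)^2$. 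So the plan is: take $K$ a thin slab/half-space with origin on the face, $L=K+\text{cone}$ i.e. $h_L=h_K+\eps|\cdot|$-type perturbation (the body $K+\eps(\text{segment or cone})$), compute $F,F',F''$ asymptotically as the slab thickness tends to zero, and compare.

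The main obstacle is the asymptotic computation: one must verify that the error terms (curvature of the smoothed edges, truncation at radius $R$, the non-flat parts of $\partial K$) are negligible relative to the leading order in the thin limit, and that the sign conventions give $1-q_n$ rather than a weaker constant. I would first do the computation directly with explicit sets (e.g. $K_t=\{x: x_1\le t\,|y|\}$-type cones intersected with a huge ball) rather than through \eqref{eq:var}, computing $\gamma_n(K_t)$ by Fubini in closed form, expanding to second order in $t$ near a configuration where $F(0)$ is of the same order as the correction terms; the inequality $F(t)^q\ge(1-t)F(0)^q+tF(1)^q$ then fails for $q>q_n$ by a second-order Taylor comparison. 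Finally, since the failing configuration consists of convex bodies containing the origin, this shows $\alpha_\gamma(n)\le q_n$.
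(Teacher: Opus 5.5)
There is a genuine gap. You never commit to a Minkowski-linear family of convex sets containing the origin along which $FF''/F'^2\to1-q_n$, and none of the candidates you list works.

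\textbf{Half-spaces and slabs.} The families $\{x_1\le s\}$ with $s\ge0$, and $\{a\le x_1\le b\}$ with $a\le0\le b$, reduce to one-dimensional Gaussian computations. There $F=\Phi(b)-\Phi(a)$ is concave along any affine path, since $F''=-b\varphi(b)\dot b^2+a\varphi(a)\dot a^2\le0$. Hence $F^q$ is concave for every $q\le1$, and no violation can come from these sets, truncated or smoothed.

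\textbf{The thin-slab data.} On a flat face through the origin one has $H_\gamma=0$. Moreover $f=\psi\circ\nu_K$ is constant there, so data such as $f(y)=|y|$ cannot arise from a support-function perturbation.

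\textbf{The aperture family.} Your final concrete choice $K_t=\{x_1\le t|y|\}$ fails for a different reason. It is not convex for $t>0$. For $t<0$, changing the aperture is not a Minkowski interpolation: the Minkowski average of two coaxial convex cones with common apex is just the wider cone. So the inequality holds trivially, and a Taylor expansion of $t\mapsto\gamma_n(K_t)$ says nothing about it.

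\textbf{The target ratio.} The constant you aim for is $1-q_n=(\E|Y|)^2/\E|Y|^2$ with $Y$ standard Gaussian in $\R^{n-1}$. This is the reciprocal of the ratio you display.

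\textbf{The missing idea.} You should \emph{translate} a narrow cone with apex at the origin along its own axis. Let $A_\alpha=\{x_n\ge|x'|\tan\alpha\}$ and $B_{\alpha,\eps}=A_\alpha-\eps e_n$. Both contain the origin, and $\frac12A_\alpha+\frac12B_{\alpha,\eps}=A_\alpha-\frac\eps2e_n$, so the right object is $F_\alpha(t)=\gamma_n(A_\alpha-te_n)$. Your ``$K+t[0,e_1]$'' would give exactly this if $K$ were such a cone opening towards $-e_1$, rather than a half-space.

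\textbf{The computation.} Fubini gives $F_\alpha(0)$, $F_\alpha'(0)$ and $F_\alpha''(0)$ in closed form. Put $m=n-1$, $c_0=\int_0^\infty s^{n-1}e^{-s^2/2}\,ds$ and $c_1=\int_0^\infty s^{n-2}e^{-s^2/2}\,ds$. In the narrow-cone limit $\alpha\uparrow\frac\pi2$ one gets $F_\alpha'(0)/F_\alpha(0)\to mc_1/c_0$ and $F_\alpha''(0)/F_\alpha(0)\to m$. Hence $FF''/F'^2\to c_0^2/(mc_1^2)=1-q_n$, so $(F_\alpha^q)''(0)>0$ for every $q>q_n$ once $\alpha$ is close to $\frac\pi2$.

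\textbf{Conclusion.} The expansion $F_\alpha(\frac\eps2)^q-\frac12F_\alpha(0)^q-\frac12F_\alpha(\eps)^q=-\frac{\eps^2}{8}(F_\alpha^q)''(0)+o(\eps^2)$ then breaks midpoint $q$-concavity. Intersecting with $RB_2^n$ for large $R$ carries the strict failure to convex bodies, by monotone convergence of the three measures. Without this choice of family, the asymptotic analysis you identify as the main obstacle has no correct configuration to be applied to.
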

	
	\begin{proof}
		The normalizing constant of the Gaussian density cancels from the concavity inequality, so it is suitable to replace $\gamma_n$ as a measure $\mu$ on $\mathbb{R}^n$ with  
		\(
		d\mu(x)=e^{-\frac{|x|^2}{2}}\,dx.
		\)
		
		Fix $q>q_n$.
		We show that $q$ is not admissible in the definition of $\alpha_\gamma(n)$.
		
		\proofstep{1}{Construct convex sets.}

		Fix $0<\alpha<\frac\pi2$, let $e_n=(0,\ldots,0,1)$, and for $\eps>0$ set
		\begin{align*}
			A_\alpha&=\{(x',x_n)\in\R^{n-1}\times\R:x_n\ge |x'|\tan\alpha\},\\
			B_{\alpha,\eps}&=A_\alpha-\eps e_n=\{(x',x_n):x_n\ge |x'|\tan\alpha-\eps\}.
		\end{align*}
		Both sets are convex and contain the origin, and 
		\[
		\frac12A_\alpha+\frac12B_{\alpha,\eps}=A_\alpha-\frac\eps2e_n.
		\]
		Define $F_\alpha(t)=\mu(A_\alpha-te_n)$, $t\in \mathbb{R}$.
		Writing $r=|x'|$ gives
		\[
		F_\alpha(t)=\omega_{n-2}\int_0^\infty r^{n-2}e^{-\frac{r^2}{2}}\int_{r\tan\alpha-t}^\infty e^{-\frac{y^2}{2}}\,dy\,dr,
		\]
		where $\omega_k=\mathcal H^k(\Sph^k)$. In particular, $\omega_0=2$.
		For fixed $\alpha$, Gaussian domination permits two differentiations near $t=0$, and direct computations (see \cite[Step 3 in the proof of the upper bound]{XiongYang2026}) give that
		\begin{align*}
			F_\alpha'(0)&=\omega_{n-2}\int_0^\infty r^{n-2}e^{-\frac{r^2}{2\cos^2\alpha}}\,dr,\\
			F_\alpha''(0)&=\omega_{n-2}\tan\alpha\int_0^\infty r^{n-1}e^{-\frac{r^2}{2\cos^2\alpha}}\,dr.
		\end{align*}
		
		\proofstep{2}{Compute the narrow-cone limit.}

		Put $m=n-1$, $\beta=\frac\pi2-\alpha$, and
		\[
		c_0=\int_0^\infty s^{n-1}e^{-\frac{s^2}{2}}\,ds =2^{\frac n2-1}\Gamma\Bigl(\frac n2\Bigr), \qquad c_1=\int_0^\infty s^{n-2}e^{-\frac{s^2}{2}}\,ds =2^{\frac{n-3}{2}}\Gamma\Bigl(\frac{n-1}{2}\Bigr).
		\]
		Polar coordinates, followed by $r=s\cos\alpha$ in the two derivative formulas, give
		\begin{align*}
			F_\alpha(0)&=\omega_{n-2}c_0\int_0^\beta\sin^{m-1}\theta\,d\theta,\\
			F_\alpha'(0)&=\omega_{n-2}c_1\cos^m\alpha\quad\text{and}\quad 
			F_\alpha''(0)=\omega_{n-2}c_0\sin\alpha\cos^m\alpha.
		\end{align*}
		Moreover,
		\[
		q_n=1-\frac{c_0^2}{mc_1^2}.
		\]
		Since $\int_0^\beta\sin^{m-1}\theta\,d\theta\sim\frac{\beta^m}{m}$ and $\cos\alpha=\sin\beta\sim\beta$, we obtain
		\[
		\frac{F_\alpha'(0)}{F_\alpha(0)}\longrightarrow m\frac{c_1}{c_0},\qquad
		\frac{F_\alpha''(0)}{F_\alpha(0)}\longrightarrow m
		\]
		as $\alpha\uparrow\frac\pi2$.
		The chain rule gives
		\[
		(F_\alpha^q)''(0)=qF_\alpha(0)^q\Biggl[\frac{F_\alpha''(0)}{F_\alpha(0)}+(q-1)\Bigl(\frac{F_\alpha'(0)}{F_\alpha(0)}\Bigr)^2\Biggr].
		\]
		The expression in brackets converges to
		\[
		m^2\Bigl(\frac{c_1}{c_0}\Bigr)^2(q-q_n)>0,\quad \text{as}\ \alpha\uparrow\frac\pi2.
		\] 
		Choose $\alpha$ so that this expression is positive.
		Then $(F_\alpha^q)''(0)>0$, and Taylor's theorem gives, for all sufficiently small $\eps>0$,
		\[
		F_\alpha\Bigl(\frac\eps2\Bigr)^q-\frac12F_\alpha(0)^q-\frac12F_\alpha(\eps)^q=-\frac{\eps^2}{8}(F_\alpha^q)''(0)+o(\eps^2)<0.
		\]
		Thus the desired inequality fails strictly for $A_\alpha$ and $B_{\alpha,\eps}$.
		
		\proofstep{3}{Truncation to convex bodies.}

		For $R>0$, set
		\[
		A_R=A_\alpha\cap RB_2^n,\qquad B_R=B_{\alpha,\eps}\cap RB_2^n,\qquad C_R=\frac12A_R+\frac12B_R.
		\]
		The sets $A_R$ and $B_R$ belong to $\mathcal K_o^n$.
		Moreover, $A_R\uparrow A_\alpha$, $B_R\uparrow B_{\alpha,\eps}$ as $R\to \infty$, and $C_R$ is increasing in $R$.
		If $x=\frac12(a+b)$ with $a\in A_\alpha$ and $b\in B_{\alpha,\eps}$, then $x\in C_R$ whenever $R\ge\max\{|a|,|b|\}$.
		Consequently,
		\[
		C_R\uparrow\frac12A_\alpha+\frac12B_{\alpha,\eps}=A_\alpha-\frac\eps2e_n.
		\]
		Continuity from below gives convergence of all three Gaussian measures.
		Since the desired inequality fails strictly, it also fails for $A_R$ and $B_R$ when $R$ is sufficiently large.
		Thus $q$ is not admissible.
		Since $q>q_n$ was arbitrary, no exponent larger than $q_n$ is admissible, and hence $\alpha_\gamma(n)\le q_n$.
	\end{proof}
	
	\vskip 20pt
	
	\section{\bf The one-dimensional variational problem}
	\label{sec:one-dim}
	
	In this part, we solve a one-dimensional variational problem \eqref{eq:lambda-definition} for every real $m\ge1$. The geometric application in Section \ref{lift} uses $m=n-1$.
	
	Recall the radial weight from Subsection~\ref{ssec:radial-tools} and set
	\[
	W_{m,R}=\int_0^R w_m(r)\,dr,\qquad d\mu_{m,R}(r)=\frac{w_m(r)}{W_{m,R}}\,dr.
	\]
	Define
	\[
	\begin{aligned}
		\E_{m,R}:L^1((0,R),\mu_{m,R})&\to\mathbb R, & \E_{m,R}\phi&:=\int_0^R\phi\,d\mu_{m,R},\\
		\Var_{m,R}:L^2((0,R),\mu_{m,R})&\to[0,\infty), & \Var_{m,R}(\phi)&:=\E_{m,R}\big[(\phi-\E_{m,R}\phi)^2\big].
	\end{aligned}
	\]
	The weighted space $H^1((0,R),w_m\,dr)$ is understood as in Subsection~\ref{ssec:functional-tools}, with squared norm $\int_0^R(v^2+v'^2)w_m\,dr$.
	On this space, define
	\[
	T:H^1((0,R),w_m\,dr)\to L^2((0,R),w_m\,dr),\qquad (Tv)(r):=rv(r)-v'(r).
	\]
	The subscripts on $\E_{m,R}$ and $\Var_{m,R}$ will occasionally be omitted for convenience.
	For $g\in L^2((0,R),w_m\,dr)$ with $\E_{m,R}g=1$ and $v\in H^1((0,R),w_m\,dr)$, define
	\[
	\Phi_{m,R}(g,v):=\E_{m,R}\Big[v'^2+v^2+\frac{(g+Tv)^2}{m}\Big] -\Var_{m,R}(g),
	\]
	and
	\begin{equation}\label{eq:lambda-definition}
		\lambda_m(R)=\sup_{\substack{g\in L^2((0,R),w_m\,dr)\\ \E_{m,R}g=1}}\ \inf_{v\in H^1((0,R),w_m\,dr)}\Phi_{m,R}(g,v).
	\end{equation}
	
	An admissible pair \((g_*,v_*)\) for this max--min problem, if it exists, is called a saddle point if
	\[
	\Phi_{m,R}(g,v_*) \le \Phi_{m,R}(g_*,v_*) \le \Phi_{m,R}(g_*,v)
	\]
	for every admissible \((g,v)\). Here, a pair \((g,v)\) is admissible if $g\in L^2((0,R),w_m\,dr)$ with $\E_{m,R}g=1$ and $v\in H^1((0,R),w_m\,dr)$.
	
	The main result of this section is the following.
	
	\begin{theorem}\label{thm:radial-value}
		For every real $m\ge1$ and $R>0$,
		\[
		\lambda_m(R)\ge \kappa_m, \quad\text{where}\  \kappa_m=1-\frac2m \frac{\Gamma(\frac{m+1}{2})^2}{\Gamma(\frac m2)^2}.
		\]
		Moreover, the half-line limit satisfies
		\begin{equation}\label{eq:radial-limit}
			\lim_{R\to\infty}\lambda_m(R)=\kappa_m.
		\end{equation}
		The supremum in \eqref{eq:lambda-definition} is attained for every $R>0$.
	\end{theorem}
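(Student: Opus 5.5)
The plan is to exploit the convex--concave structure of $\Phi_{m,R}$ and turn the max--min into a computable dual problem.

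\textbf{Structure and attainment.} For fixed $g$, $v\mapsto\Phi_{m,R}(g,v)$ is a continuous quadratic functional on $H^1((0,R),w_m\,dr)$. It is coercive, since it dominates $\E[v'^2+v^2]$. Theorem~\ref{thm:lax-milgram} therefore gives a unique minimizer $v_g$, characterized by the weak Euler--Lagrange identity
\[
\E\bigl[v_g'h'+v_gh+\tfrac1m(g+Tv_g)\,Th\bigr]=0\qquad(h\in H^1).
\]
On the affine set $\E g=1$ we have $\Var(g)=\E g^2-1$. Hence, for fixed $v$, $g\mapsto\Phi_{m,R}(g,v)$ equals $(\tfrac1m-1)\E g^2$ plus terms that are affine in $g$; it is concave for $m\ge1$, and affine when $m=1$.

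A minimax theorem (Sion, or a direct Hilbert-space saddle argument) should then give
\[
\lambda_m(R)=\inf_v\sup_g\Phi_{m,R}(g,v),
\]
together with a saddle point $(g_*,v_*)$ for $m>1$. The saddle conditions are the Euler--Lagrange identity above for $v_*$, and the pointwise relation
\[
\tfrac1m(g_*+Tv_*)-(g_*-1)=c
\]
for the multiplier $c$ of $\E g=1$. This gives
\[
g_*=\frac{Tv_*/m+1-c}{1-1/m}.
\]
Substituting this into the $v$-equation yields a single linear second-order ODE for $v_*$ with the natural boundary condition at $r=R$ and the regular branch at $0$. Existence of the maximizer for fixed $R$ follows from weak upper semicontinuity: the map $g\mapsto\inf_v\Phi_{m,R}(g,v)$ is an infimum of concave continuous functions, and the $-(1-\tfrac1m)\E g^2$ term bounds maximizing sequences. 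For $m=1$ I would verify attainment separately from the explicit saddle structure.

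\textbf{Dual formula and monotonicity.} Performing the explicit supremum over $g$ for $m>1$ gives
\[
\lambda_m(R)=\inf_v J_R(v),
\]
where $J_R$ is a quadratic functional in $v$. It should have the form $\E[v'^2+v^2]$ plus a multiple of $\Var(Tv)$, plus an affine term in $\E Tv$. I would then prove that $R\mapsto\lambda_m(R)$ is nonincreasing. The first attempt is to take the saddle point $(g_*,v_*)$ on $(0,R')$ with $R'>R$ and test it on $(0,R)$. More precisely, I would compare $\inf_v\Phi_{m,R}$ for a suitable restricted and renormalized source, and use the saddle ODE to control the boundary contribution at $r=R$ via an integration by parts. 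This monotonicity step is where I expect the main obstacle. The normalization by $\mu_{m,R}$ changes with $R$, so a naive restriction argument produces a boundary term whose sign has to come from the explicit structure of the ODE solution.

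\textbf{The half-line limit.} Granting monotonicity, the lower bound $\lambda_m(R)\ge\kappa_m$ follows from \eqref{eq:radial-limit}. For the limit, I would pass to $R=\infty$ in the dual problem and transfer it via $U$ to $L^2(\eta_\nu)$, $\nu=\frac{m+1}{2}$. There the optimality equation becomes a resolvent equation
\[
(A_\nu+2)Z=F,
\]
whose right-hand side contains $s^{-1/2}$; this comes from $r^{-1}$ terms generated when $T$ is commuted with $D$. Lemma~\ref{lem:laguerre-resolvent} and the half-moment \eqref{eq:laguerre-half-moment-app} then express the optimal value as an explicit series. I expect it to collapse, via Gauss's summation of a ${}_2F_1$ at $1$, to
\[
\frac{2}{m}\Bigl(\frac{\Gamma(\frac{m+1}{2})}{\Gamma(\frac m2)}\Bigr)^2,
\]
which gives $\kappa_m$.

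Convergence $\lambda_m(R)\to\lambda_m(\infty)$ requires two things. First, upper semicontinuity: truncate half-line competitors $v$ and use $W_{m,R}\to W_{m,\infty}$. Second, a matching lower bound, obtained by extending finite-$R$ sources.

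\textbf{The case $m=1$.} Here $s^{-1/2}\notin L^2(\eta_1)$, so the endpoint must be handled separately. I would approximate the source by the truncations $\min(s^{-1/2},N)$, or argue by continuity in $m\downarrow1$, and verify directly that the value tends to $\kappa_1=1-2/\pi$.
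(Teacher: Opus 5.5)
\textbf{There is a genuine gap, and it is the one you flagged as the ``main obstacle''.} You never prove that $R\mapsto\lambda_m(R)$ is nonincreasing. Yet the finite-$R$ bound $\lambda_m(R)\ge\kappa_m$, which is the part of the statement used on every ray of $K$, rests entirely on it. Your suggested route, restricting the saddle source from $(0,R')$ to $(0,R)$, points the wrong way. To bound $\inf_v\Phi_{m,R}(\tilde g,v)$ from below you must control \emph{every} competitor $v$ on $(0,R)$, and such $v$ cannot be extended to $(R,R')$ at controlled cost.

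The workable direction is the dual one. Extend the minimizer $v_{m,R_0}$ of $J_{m,R_0}$ linearly past $R_0$. Then $R\mapsto J_{m,R}(\widetilde v|_{(0,R)})$ touches $\lambda_m$ from above at $R_0$, and $\lambda_m$ is differentiable by the implicit function theorem. Put $\tau=\E_{m,R}(Tv_{m,R})$. Using the natural boundary condition $mv'(R)=Rv(R)+\frac{m-1-\tau}{m}$ and $J_{m,R}(v_{m,R})=\frac{1+\tau}{m}$, one gets
\[
\lambda_m'(R)=\frac{w_m(R)}{W_{m,R}}\bigl[mv_{m,R}'(R)^2+v_{m,R}(R)^2-\lambda_m(R)\bigl(1-\lambda_m(R)\bigr)\bigr].
\]
The sign of the bracket is not automatic, and the idea that settles it is what your proposal lacks. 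Write $v_{m,R}=-(1-\lambda_m(R))z$ with $z=z_{m,R}$. The paper shows that $\Xi=z'+\frac{1-rz}{m}$ vanishes at $0$ and $R$. It also solves a second-order equation with positive zeroth-order coefficient and positive right-hand side when $m>1$.
\begin{itemize}
\item The maximum principle gives $\Xi>0$. Hence $(Tz)'=\frac mr\Xi>0$ and $H_m(R)\le Tz(R)$, that is, $1-H_m(R)\ge-(m-1)z'(R)$.
\item Hopf's lemma at $R$ gives $(m+R^2)z(R)<R$, that is, $mz'(R)^2+z(R)^2\le-z'(R)$.
\end{itemize}
Together with $\frac{\lambda_m}{1-\lambda_m}=\frac{1-H_m(R)}{m-1}$, these make the bracket nonpositive.

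\textbf{Even granting monotonicity, your limit argument supplies only the wrong inequality.} Truncating half-line competitors $v$ gives $\limsup_{R\to\infty}\lambda_m(R)\le\lambda_m(\infty)$. Extending finite-$R$ sources to the half-line gives the same direction: extending $g$ by $1$ beyond $R$ yields $\lambda_m(\infty)\ge\frac{W_{m,R}}{W_{m,\infty}}\lambda_m(R)$. The lower bound needs the opposite inequality $\liminf_{R\to\infty}\lambda_m(R)\ge\kappa_m$, and that requires compactness of the finite-$R$ optimizers.

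The paper extends $z_{m,R}$ by the constant $z_{m,R}(R)$ beyond $R$. It bounds these extensions uniformly in $H^1$ using $0<-z'(R)\le\frac1m$ and $0<z(R)\le\frac1{\sqrt m}$ from the same maximum-principle lemma. It then passes to the weak limit $z_{m,\infty}$ and uses $\lambda_m(R)=\frac{1-H_m(R)}{m-H_m(R)}$ with $H_m(R)\to m\langle r^{-1},z_{m,\infty}\rangle$. Alternatively, the bound $\E_{m,R}(v_{m,R}'^2+v_{m,R}^2)\le J_{m,R}(0)=\frac1m$ plus weak lower semicontinuity of the convex functional $J$ would also work.

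\textbf{The case $m=1$ depends on the same missing step.} If you argue by continuity in $m\downarrow1$, the finite-$R$ bound is inherited from $m>1$ and so needs the monotonicity you have not proved. That continuity is itself a real step, because the penalty $\Var(Tv)/(m-1)$ degenerates into the hard constraint that $Tv$ be constant. The limit value $1-\frac2\pi$ also needs an explicit competitor with $TM=1$, namely the Mills-ratio profile.
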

	
	For $m=n-1$, the constant in Theorem~\ref{thm:radial-value} satisfies $\kappa_{n-1}=q_n$.
	
	For $m>1$, we eliminate $g$ by a saddle reduction, show that $\lambda_m(R)$ is nonincreasing in $R$, and evaluate the worst case on the half-line using the Laguerre facts from Subsection~\ref{ssec:radial-tools}.
	The case $m=1$ is handled by a degenerate saddle argument separately.
	Explicit maximizing profiles, which play no role in the sharp radial estimate for $\lambda_m$, are given in Appendix~\ref{app:explicit-profiles} for completeness.
	
	\subsection{Saddle reduction away from \texorpdfstring{$m=1$}{m=1}}
	
	We first treat the case $m>1$.
	
	Let $D=\frac{d}{dr}$. So $T=r-D$.
	To derive the Euler equation and its endpoint condition, we use weighted integration by parts on $(0,R)$.
	Throughout this subsection, $D^*$ and $T^*$ denote only the formal adjoint differential expressions with respect to $w_m\,dr$; they do not specify an operator domain or a boundary condition at $R$.
	For $\phi,\psi\in C_c^\infty(0,R)$, the formal adjoints are characterized by
	\[
	\int_0^R (D\phi)\psi w_m\,dr =\int_0^R \phi(D^*\psi)w_m\,dr, \qquad \int_0^R (T\phi)\psi w_m\,dr =\int_0^R \phi(T^*\psi)w_m\,dr.
	\]
	Since $\frac{w_m'}{w_m}=\frac{m}{r}-r$, direct integration by parts gives
	\begin{equation}\label{eq:adjoints}
		\begin{aligned}
			D^*&=-D+r-\frac mr, &T^*&=D+\frac mr,\\
			D^*D&=-D^2+\big(r-\frac mr\big)D, &T^*T&=D^*D+(m+1).
		\end{aligned}
	\end{equation}
	If $\phi$ and $\psi$ are smooth, vanish near $0$, and have traces at $R$, the same calculation retains the endpoint terms
	\begin{align*}
		\int_0^R (D\phi)\psi w_m\,dr&=\int_0^R\phi(D^*\psi)w_m\,dr+\phi(R)\psi(R)w_m(R),\\
		\int_0^R (T\phi)\psi w_m\,dr&=\int_0^R\phi(T^*\psi)w_m\,dr-\phi(R)\psi(R)w_m(R).
	\end{align*}
	The argument in Subsection~\ref{ssec:radial-tools} shows that functions of this type are dense in $H^1((0,R),w_m\,dr)$.
	Thus the weak integrations by parts used below have no contribution from the singular endpoint $0$, while the trace term at $R$ remains.
	
	In the variational argument below, setting the coefficient of the arbitrary trace at $R$ equal to zero gives the natural boundary condition for the minimizer.
	
	\begin{lemma}\label{lem:saddle-reduction}
		Let $m>1$.
		Then
		\begin{equation}\label{eq:dual-functional}
			\lambda_m(R) =\inf_{v\in H^1((0,R),w_m\,dr)}J_{m,R}(v),
		\end{equation}
		where $J_{m,R}:H^1((0,R),w_m\,dr)\to\mathbb R$ is defined by
		\begin{equation}\label{eq:J-functional}
			J_{m,R}(v):=\E_{m,R}(v'^2+v^2) +\frac{\Var_{m,R}(Tv)}{m-1} +\frac{(1+\E_{m,R}Tv)^2}{m}.
		\end{equation}
		The minimizer is unique and denoted by $v_{m,R}$.
		Let $z=z_{m,R}\in H^1((0,R),w_m\,dr)$ be the unique solution of
		\begin{align}
			(D^*D+2)z=\frac1r,& \quad \text{in }(0,R),\label{eq:z-ode}\\
			mz'(R)=Rz(R)-1,&\label{eq:z-boundary}
		\end{align}
		on the regular branch at the origin, and  $H_m(R)=\E_{m,R}(Tz_{m,R})$.
		Then
		\begin{equation}\label{eq:c-lambda-formula}
			v_{m,R}=-c_{m,R}z_{m,R},\qquad c_{m,R}=\frac{m-1}{m-H_m(R)},\qquad \lambda_m(R)=1-c_{m,R} =\frac{1-H_m(R)}{m-H_m(R)},
		\end{equation}
		and the maximizing right-hand side in \eqref{eq:lambda-definition} is
		\begin{equation}\label{eq:g-optimal}
			g_{m,R}(r)=\frac{m-Tz_{m,R}(r)}{m-H_m(R)} =\frac{m-rz_{m,R}(r)+z_{m,R}'(r)}{m-H_m(R)}.
		\end{equation}
	\end{lemma}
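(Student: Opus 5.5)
The plan is to compute the inner supremum over $g$ explicitly, show that the resulting functional $J_{m,R}$ has a unique minimizer, and then build a saddle point from that minimizer. For fixed $v\in H^1((0,R),w_m\,dr)$, I write $g=1+h$ with $\E h=0$ and $a=Tv-\E Tv$. Since $\E[(g+Tv)^2]=\Var(h+a)+(1+\E Tv)^2$, expanding gives
\[
\Phi_{m,R}(g,v)=\E(v'^2+v^2)+\frac{(1+\E Tv)^2}{m}+\frac{\E a^2}{m}+\frac{2}{m}\E[ha]-\Bigl(1-\frac1m\Bigr)\E h^2 .
\]
For $m>1$ this is strictly concave in $h$. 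It is maximized uniquely at $h=a/(m-1)$, where it equals $J_{m,R}(v)$. Hence $\sup_g\Phi_{m,R}(g,v)=J_{m,R}(v)$, and therefore $\lambda_m(R)\le\inf_vJ_{m,R}(v)$ by the trivial max--min inequality.

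Next I show that $J_{m,R}$ has a unique minimizer $v_{m,R}$. The map $T$ is bounded from $H^1((0,R),w_m\,dr)$ to $L^2$, so $J_{m,R}$ is continuous. The term $\E(v'^2+v^2)$ makes $J_{m,R}$ strictly convex and coercive, and the remaining two terms are convex. Existence and uniqueness then follow from the direct method, or equivalently from Theorem~\ref{thm:lax-milgram} applied to the Euler equation.

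For the saddle point I set $g_*=1+(Tv_{m,R}-\E Tv_{m,R})/(m-1)$. Because $\Phi_{m,R}(g_*,\cdot)$ is convex, it suffices to check that $v_{m,R}$ is a critical point of it. The two first variations coincide:
\[
\tfrac{2}{m}\E[(g_*+Tv)T\varphi]=\tfrac{2}{m-1}\E[(Tv-\E Tv)T\varphi]+\tfrac{2}{m}(1+\E Tv)\E T\varphi .
\]
So $\inf_v\Phi_{m,R}(g_*,v)=\Phi_{m,R}(g_*,v_{m,R})=J_{m,R}(v_{m,R})$, and combined with the first paragraph this gives \eqref{eq:dual-functional} together with the maximizer $g_*$.

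To identify $v_{m,R}$, I test the Euler equation against functions that vanish near $0$ and have a trace at $R$. These are dense in $H^1((0,R),w_m\,dr)$ by Subsection~\ref{ssec:radial-tools}, and I integrate by parts using \eqref{eq:adjoints} and the endpoint formulas. The interior operator is
\[
(m-1)(D^*D+1)+T^*T=m(D^*D+2),
\]
and the constant term contributes $\E T\varphi=\E[\varphi\,m/r]-\varphi(R)w_m(R)/W_{m,R}$. Hence the interior equation reads $m(D^*D+2)v=\beta\,m/r$, where $\beta$ is a constant involving $\E Tv$. Setting the coefficient of the arbitrary trace $\varphi(R)$ to zero gives the natural boundary condition. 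Finite energy at the origin selects the regular branch, so $v=-c\,z_{m,R}$, with the normalization of $\beta$ matching \eqref{eq:z-boundary}.

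The main obstacle is bookkeeping the boundary terms at $R$ so that they yield exactly $mz'(R)=Rz(R)-1$. Once that is settled, the remaining steps are algebraic. Substituting $\E Tv=-cH_m(R)$ into the definition of $\beta$ gives a linear equation, whose solution is $c=(m-1)/(m-H_m(R))$. Evaluating $J_{m,R}$ at the minimizer via the Euler identity (testing with $\varphi=v$) gives $\lambda_m(R)=1-c$. Finally, $g_*$ simplifies to \eqref{eq:g-optimal}.
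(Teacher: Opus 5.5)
Your route matches the paper's, and most steps check out. The completed square gives $\sup_g\Phi_{m,R}(g,v)=J_{m,R}(v)$ with maximizer $g=1+\frac{Tv-\E Tv}{m-1}$, and $J_{m,R}$ is strictly convex. The saddle check works by matching the first variations of $J_{m,R}$ and $\Phi_{m,R}(g_*,\cdot)$. The Euler equation $(D^*D+2)v=-\frac{m-1-\tau}{m}\frac1r$ comes with $mv'(R)=Rv(R)+\frac{m-1-\tau}{m}$. The closing algebra $J_{m,R}(v_{m,R})=\frac{1+\tau}{m}=1-c$ and $c(m-H_m(R))=m-1$ is also right.

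The step that fails as written is ``finite energy at the origin selects the regular branch, so $v=-c\,z_{m,R}$.'' Finite energy only excludes the singular branch $r^{1-m}$ at $0$. The regular homogeneous solution $\phi_m(r)={}_1F_1(1;\nu;\frac{r^2}{2})$ of $(D^*D+2)\phi=0$ is smooth on $[0,R]$, so the finite-energy solutions of \eqref{eq:z-ode} form an affine line $z_p+C\phi_m$. Your identification needs $z_{m,R}$ to be \emph{the unique} solution, and the lemma asserts this. That is exactly the claim that the homogeneous Robin problem $(D^*D+2)\psi=0$, $m\psi'(R)=R\psi(R)$ has only the zero regular solution. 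This is not automatic: testing with $\psi$ gives $0=\int_0^R(\psi'^2+2\psi^2)w_m\,dr-\frac Rm\psi(R)^2w_m(R)$, and the boundary term has the unfavorable sign. The paper handles it by integrating $\frac2m\int_0^R r\psi\psi'w_m\,dr$ by parts, using $(rw_m)'=(m+1-r^2)w_m$, which yields
\[
\int_0^R(\psi'^2+2\psi^2)w_m\,dr-\frac Rm\psi(R)^2w_m(R)=\int_0^R\Bigl(\psi'-\frac rm\psi\Bigr)^2w_m\,dr+\frac{m-1}{m}\int_0^R\Bigl(1+\frac{r^2}{m}\Bigr)\psi^2w_m\,dr.
\]
Hence $\psi=0$ because $m>1$. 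Equivalently, you could show $m\phi_m'(R)>R\phi_m(R)$ from Euler's integral representation, as in the appendix. This uniqueness, not the bookkeeping of boundary terms at $R$, is the nontrivial point you still need.

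There are two smaller omissions.
\begin{itemize}
\item Existence of $z_{m,R}$ comes from setting $z=-v_{m,R}/c$, which requires $c\neq0$. You already have what is needed: $1-c=J_{m,R}(v_{m,R})\le J_{m,R}(0)=\frac1m$ gives $c\ge1-\frac1m>0$.
\item The trace $v'(R)$ in the natural boundary condition only makes sense once you note that $v\in H^2((\frac R2,R))$. This follows from the interior equation, since the weight is nondegenerate on $(\frac R2,R)$. The paper records this before testing with functions of arbitrary trace at $R$.
\end{itemize}
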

	
	\begin{proof}
		We suppress $m,R$ from $\Phi_{m,R}$, $J_{m,R}$, $\E_{m,R}$, and $\Var_{m,R}$ in this proof for convenience.
		
		\proofstep{1}{Elimination of $g$.}
		
		\noindent\hspace*{2em}Let $v\in H^1((0,R),w_m\,dr)$ and  $\tau_v=\E(Tv)$.
		Then for every $g\in L^2((0,R),w_m\,dr)$ with $\E g=1$, 
		\[
		\frac1m\E(g+Tv)^2-\Var(g)=\frac{(1+\tau_v)^2}{m}+\frac{\Var(Tv)}{m-1}-\frac{m-1}{m}\E\Big[\Big(g-1-\frac{Tv-\tau_v}{m-1}\Big)^2\Big].
		\]
		Since $m>1$, the unique maximizer over all such $g$ is
		\begin{equation}\label{eq:g-fixed-v}
			g=1+\frac{Tv-\E(Tv)}{m-1},
		\end{equation}
		and consequently
		\[
		\sup_{\E g=1}\Phi(g,v)=J(v).
		\]
		
		On $H^1((0,R),w_m\,dr)$, define the symmetric bilinear form
		\[
		Q(u,h):=\E(u'h'+uh)+\frac{\E\big[(Tu-\E Tu)(Th-\E Th)\big]}{m-1}+\frac{\E(Tu)\E(Th)}{m}.
		\]
		Then $J(v)=Q(v,v)+2\frac{\E(Tv)}{m}+\frac{1}{m}$, and
		\[
		Q(h,h)=\E(h'^2+h^2)+\frac{\Var(Th)}{m-1}+\frac{(\E Th)^2}{m}\ge \E(h'^2+h^2).
		\]
		Since $T:H^1((0,R),w_m\,dr)\to L^2(\mu_{m,R})$ is continuous, $Q$ is bounded and coercive and $h\mapsto\E(Th)$ is bounded.
		For $v,h\in H^1((0,R),w_m\,dr)$, direct differentiation gives
		\[
		\frac12\frac{d}{d\varepsilon}J(v+\varepsilon h)\Big|_{\varepsilon=0}=Q(v,h)+\frac1m\E(Th).
		\]
		Hence $v_*$ is a critical point of $J$ exactly when
		\[
		Q(v_*,h)=-\frac1m\E(Th)\qquad\text{for every }h\in H^1((0,R),w_m\,dr),
		\]
		and Lax--Milgram gives the unique $v_*$.
		Moreover, for $0<t<1$ and $v, w\in H^1((0,R),w_m\,dr)$ with $v\ne w$,
		\[
		tJ(v)+(1-t)J(w)-J(tv+(1-t)w)=t(1-t)Q(v-w,v-w)>0,
		\]
		where the last inequality follows from the displayed coercive bound.
		Thus $J$ is strictly convex, and its critical point $v_*$ is the unique minimizer. Write $v_*=v_{m,R}$.
		
		\proofstep{2}{Euler equation and natural boundary condition.}
		
		\noindent\hspace*{2em}Let $\tau=\E(Tv_*)$.
		Since $v_*$ minimizes $J$, for every $h\in H^1((0,R),w_m\,dr)$,
		\[
		0=\frac12\frac{d}{d\varepsilon}J(v_*+\varepsilon h)\Big|_{\varepsilon=0}
		=\E(v_*'h'+v_*h)+\frac1{m-1}\E\big[(Tv_*-\tau)Th\big]+\frac{1+\tau}{m}\E(Th).
		\]
		
		First take $h\in C_c^\infty(0,R)$ to determine the interior equation.
		Since $T^*T=D^*D+(m+1)$ and $T^*1=\frac{m}{r}$, the above first-variation identity gives
		\[
		\begin{aligned}
			0&=D^*Dv_*+v_*+\frac{T^*(Tv_*-\tau)}{m-1}+\frac{1+\tau}{m}T^*1\\
			&=\frac{m}{m-1}(D^*D+2)v_*+\frac{m-1-\tau}{(m-1)r}
		\end{aligned}
		\]
		Thus
		\[
		(D^*D+2)v_*=-\frac{m-1-\tau}{m}\frac1r.
		\]
		Since $m>1$, the right-hand side belongs to $L^2((0,R),\mu_{m,R})$.
		On $(\frac{R}{2},R)$, the weight is bounded above and below by positive constants, and
		\[
		v_*''=\big(r-\frac mr\big)v_*'+2v_*+\frac{m-1-\tau}{mr}
		\]
		has right-hand side in $L^2$.
		Thus $v_*\in H^2((\frac{R}{2},R))$, so $v_*'(R)$ and $Tv_*(R)$ are well defined.
		
		To obtain the boundary condition, take $h\in C^\infty([0,R])$ vanishing near $0$, with arbitrary $h(R)$.
		Applying the Green identities in the first-variation formula gives
		\[
		0=\frac{w_m(R)}{W_{m,R}}h(R)\Big[v_*'(R)-\frac{Tv_*(R)-\tau}{m-1}-\frac{1+\tau}{m}\Big].
		\]
		Since $\frac{w_m(R)}{W_{m,R}}>0$ and $h(R)$ is arbitrary, 
		\[
		0=v_*'(R)-\frac{Tv_*(R)-\tau}{m-1}-\frac{1+\tau}{m}.
		\]
		Then $Tv_*(R)=Rv_*(R)-v_*'(R)$ gives
		\begin{equation}\label{eq:v-boundary}
			mv_*'(R)=Rv_*(R)+\frac{m-1-\tau}{m}.
		\end{equation}
		
		\proofstep{3}{Identification of the minimizer and representation of its value.}
		
		\noindent\hspace*{2em}Taking $h=v_*$ in the first-variation identity gives
		\[
		\E(v_*'^2+v_*^2)+\frac{\Var(Tv_*)}{m-1}+\frac{(1+\tau)\tau}{m}=0.
		\]
		By the definition of $J$, 
		\begin{equation}\label{eq:J-scaling}
			J(v_*)=\frac{1+\tau}{m}.
		\end{equation}
		Define
		\[
		c=c_{m,R}:=\frac{m-1-\tau}{m}=1-J(v_*).
		\]
		Since $0\le J(v_*)\le J(0)=\frac{1}{m}$, we have $c\ge1-\frac{1}{m}>0$, and the equation and boundary condition from Step~2 become
		\[
		(D^*D+2)v_*=-\frac c r,\qquad mv_*'(R)=Rv_*(R)+c.
		\]
		Therefore $z=-\frac{v_*}{c}$ belongs to the form domain and solves \eqref{eq:z-ode}--\eqref{eq:z-boundary}.
		The other local homogeneous branch behaves as $r^{1-m}$, whose derivative is not square-integrable against $w_m\,dr$ for $m>1$, so $z$ is on the regular branch at $0$.
		
		To prove the uniqueness of $z$, let $\psi$ be the difference of two regular solutions.
		Then
		\[
		(D^*D+2)\psi=0,\qquad m\psi'(R)=R\psi(R).
		\]
		Regularity at $0$ makes the boundary term there vanish.
		Multiplication by $\psi$, integration by parts, and completion of the square give
		\[
		\begin{aligned}
			0&=\int_0^R(\psi'^2+2\psi^2)w_m\,dr-\frac Rm\psi(R)^2w_m(R)\\
			&=\int_0^R\Bigl(\psi'-\frac rm\psi\Bigr)^2w_m\,dr+\frac{m-1}{m}\int_0^R\Bigl(1+\frac{r^2}{m}\Bigr)\psi^2w_m\,dr.
		\end{aligned}
		\]
		Since $m>1$, the sum can vanish only when $\psi=0$.
		
		Since $v_*=-cz$, we have
		\[
		\tau=-cH_m(R) \quad \text{and}\quad c=\frac{m-1-\tau}{m}.
		\]
		Eliminating $\tau$ gives
		\[
		c\bigl(m-H_m(R)\bigr)=m-1\quad \text{and}\quad c=\frac{m-1}{m-H_m(R)}.
		\]
		Hence
		\[
		J(v_*)=1-c=\frac{1-H_m(R)}{m-H_m(R)}.
		\]
		So the maximizer associated with $v_*$ in Step~1 is
		\[
		g_*=g_{m,R}:=1+\frac{-cTz+cH_m(R)}{m-1}=\frac{m-Tz}{m-H_m(R)},
		\]
		which is \eqref{eq:g-optimal}.
		It is admissible because $z\in H^1$, $m-H_m(R)=\frac{m-1}{c}>0$, and its construction in \eqref{eq:g-fixed-v} gives $\E g_*=1$.
		
		\proofstep{4}{Recovery of the max--min value.}
		
		\noindent\hspace*{2em}Step~1 computes $\inf_v\sup_g\Phi(g,v)$, so it remains to show that $(g_*,v_*)$ is exactly a saddle point.
		
		For $v,h\in H^1((0,R),w_m\,dr)$, write  $v_\varepsilon=v+\varepsilon h$.
		Since $T$ is linear,
		\[
		\begin{aligned}
			\Phi(g_*,v_\varepsilon)
			={}&\Phi(g_*,v)+2\varepsilon\E\Big(v'h'+vh+\frac{(g_*+Tv)Th}{m}\Big)\\
			&+\varepsilon^2\E\Big(h'^2+h^2+\frac{(Th)^2}{m}\Big).
		\end{aligned}
		\]
		Write $\tau=\E(Tv_*)$. Formula \eqref{eq:g-fixed-v} gives
		\[
		\frac{g_*+Tv_*}{m}=\frac{Tv_*-\tau}{m-1}+\frac{1+\tau}{m}.
		\]
		Thus the coefficient of $\varepsilon$ vanishes at $v=v_*$ by the first-variation identity in Step~2.
		For $h\ne0$, the coefficient of $\varepsilon^2$ is positive because $\E(h'^2+h^2)>0$.
		Thus $v_*$ is the unique minimizer of $v\mapsto\Phi(g_*,v)$. Moreover, 
		Step~1 also shows that $g_*$ is the unique maximizer of $g\mapsto\Phi(g,v_*)$ under $\E g=1$. 
		Therefore, for every admissible $g$ and $v$,
		\[
		\Phi(g,v_*)\le\Phi(g_*,v_*)\le\Phi(g_*,v).
		\]
		
		For every admissible $g$, the left saddle inequality gives
		\[
		\inf_v\Phi(g,v)\le\Phi(g,v_*)\le\Phi(g_*,v_*),
		\]
		and hence $\lambda_m(R)\le\Phi(g_*,v_*)$.
		Conversely, $g_*$ is admissible and the right saddle inequality gives
		\[
		\lambda_m(R)\ge\inf_v\Phi(g_*,v)=\Phi(g_*,v_*).
		\]
		Thus
		\[
		\lambda_m(R)=\Phi(g_*,v_*)=J(v_*)=\inf_v J(v).
		\]
		This, together with Step~3, proves \eqref{eq:dual-functional} and \eqref{eq:c-lambda-formula}.
	\end{proof}
	
	\subsection{Monotonicity in the segment length}
	
	In this part, we prove that $\lambda_m(R)$ is nonincreasing in $R$.
	
	\begin{lemma}\label{lem:boundary-estimates}
		Let $m>1$ and let $z_{m,R}$ solve \eqref{eq:z-ode}--\eqref{eq:z-boundary}.
		Then
		\[
		z_{m,R}(R)>0,\qquad z_{m,R}'(R)<0,\qquad m z_{m,R}'(R)^2+z_{m,R}(R)^2\le -z_{m,R}'(R),
		\]
		and
		\[
		1-H_m(R)\ge-(m-1)z_{m,R}'(R).
		\]
	\end{lemma}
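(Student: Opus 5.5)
The plan is to get everything from three ingredients: positivity and sign information for $z=z_{m,R}$ by a maximum-principle argument, a pointwise quadratic bound at $r=R$ obtained from a first-order Lyapunov-type quantity along the ODE, and the weak identities of \eqref{eq:z-ode}--\eqref{eq:z-boundary} with suitable test functions, which turn $H_m(R)$ into boundary data. I record the weak form first. By the Green identity of this section (no contribution from $0$, a trace term at $R$), for every $h\in H^1((0,R),w_m\,dr)$,
\[
\int_0^R(z'h'+2zh)w_m\,dr-z'(R)h(R)w_m(R)=\int_0^R\frac{h}{r}\,w_m\,dr .
\]

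\textbf{Step 1: signs.} I would first show $z>0$ on $(0,R]$. Test the weak identity with $h=z^-$. The source $1/r$ is positive. The boundary term, after substituting $z'(R)=(Rz(R)-1)/m$, equals $-\tfrac1m(Rz(R)-1)z^-(R)w_m(R)$. On the set where $z(R)<0$ this is $\tfrac1m\bigl(R(z^-(R))^2+z^-(R)\bigr)w_m(R)$, so the $-1$ in the Robin condition has a favorable sign. The Robin coefficient $R/m$ itself is unfavorable. I expect it to be absorbed exactly as in the uniqueness proof of Lemma~\ref{lem:saddle-reduction}, by completing the square with $(\psi'-\tfrac rm\psi)^2$, which forces $z^-=0$. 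Strict positivity at $R$ then follows from the ODE and uniqueness: $z(R)=0$ would give $z'(R)=-1/m<0$, which is incompatible with $z\ge0$ near $R$. For $z'(R)<0$ it suffices, by \eqref{eq:z-boundary}, to show $Rz(R)<1$. This will also follow from the quadratic bound in Step~2, since $mz'^2+z^2\le -z'$ forces $z'(R)\le0$, and equality $z'(R)=0$ would force $z(R)=0$, contradicting Step~1.

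\textbf{Step 2: the quadratic bound at $R$ (main obstacle).} Write $y=z'$. The ODE reads $y'=(r-\tfrac mr)y+2z-\tfrac1r$, and the target is $P(R):=my(R)^2+z(R)^2+y(R)\le0$. My first attempt is to study $P(r)=my^2+z^2+y$ along the whole trajectory on the regular branch. Near $0$, regularity gives $z(0)$ finite and $y(r)\approx r\bigl(2z(0)-\text{singular part}\bigr)/(m+1)-\tfrac{1}{m-1}$. Indeed the $1/r$ source produces $y(0^+)=-\tfrac1{m-1}$ on the regular branch, so $P(0^+)=\tfrac{m}{(m-1)^2}+z(0)^2-\tfrac1{m-1}$. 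This is not obviously $\le0$, so a plain monotonicity of $P$ will probably not do.

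The fallback is to use the Robin condition to eliminate $y(R)$. Then $P(R)\le0$ becomes an explicit inequality on $x:=z(R)$, namely
\[
\frac{(Rx-1)^2}{m}+x^2+\frac{Rx-1}{m}\le0,
\qquad\text{that is,}\qquad
x\bigl((R^2+m)x-R\bigr)\le0 ,
\]
which is equivalent to $0\le z(R)\le \dfrac{R}{R^2+m}$. I would prove the upper bound by comparison. Take $\varphi=c\,r$ or a combination like $a+br$ with $a,b$ chosen so that $(D^*D+2)\varphi\ge 1/r$ and $\varphi$ satisfies the Robin inequality at $R$ in the supersolution direction. Then run the same maximum principle as in Step~1 on $\varphi-z$, and check that $\varphi(R)=R/(R^2+m)$. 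Finding the right supersolution is the step I expect to be delicate.

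\textbf{Step 3: the bound on $1-H_m(R)$.} Use $T^*1=m/r$ and the trace formula for $T$ to write
\[
W_{m,R}H_m(R)=m\int_0^R\frac{z}{r}\,w_m\,dr-z(R)w_m(R).
\]
Then test the weak identity with $h=z$ to express $\int_0^R (z/r)\,w_m\,dr$ through the energy $\int_0^R(z'^2+2z^2)w_m\,dr$ and the boundary term $z'(R)z(R)w_m(R)$, and test with $h=1$ to control $\int_0^R z\,w_m\,dr$ through $z'(R)$. Comparing the result with $W_{m,R}$, and using the uniqueness-proof square completion to bound the energy from below, should reduce $1-H_m(R)+(m-1)z'(R)\ge0$ to a boundary inequality. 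I expect that boundary inequality to be exactly the quadratic bound from Step~2.
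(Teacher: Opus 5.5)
There is a genuine gap, and it starts with the sign claim in Step~1. The constant in the Robin condition $mz'(R)=Rz(R)-1$ works \emph{against} positivity, not for it. Testing with $\psi=z^-$ gives exactly
\[
\int_0^R(\psi'^2+2\psi^2)w_m\,dr-\frac Rm\psi(R)^2w_m(R)+\int_0^R\frac{\psi}{r}\,w_m\,dr=\frac1m\,\psi(R)\,w_m(R).
\]
After the square completion the left side is only nonnegative, and the right side is a nonnegative forcing term, so nothing forces $\psi=0$. Equivalently, with the coercive form $a(\psi,h)=\int_0^R(\psi'h'+2\psi h)w_m\,dr-\frac Rm\psi(R)h(R)w_m(R)$, the function $z$ solves $a(z,h)=\int_0^R\frac hr\,w_m\,dr-\frac1m h(R)w_m(R)$. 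The boundary datum $-\frac1m$ pushes $z$ down at $R$, so no weak maximum principle yields $z(R)\ge0$.

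Your strictness argument also fails: $z(R)=0$ with $z'(R)=-\frac1m$ just means $z>0$ to the left of $R$. Incidentally, the regular branch has $z'(0^+)=-\frac1m$, not $-\frac1{m-1}$.

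In Step~2 your reduction of the quadratic bound to $0\le z(R)\le\frac{R}{R^2+m}$ is correct, and it is the paper's algebra too. However, the upper bound is left to an unspecified supersolution. The natural candidate $\varphi=\frac{r}{r^2+m}$ has the right value at $R$ and satisfies $m\varphi'(R)\ge R\varphi(R)-1$, but $(D^*D+2)\varphi\approx-\frac1r$ near $0$, so it is not a supersolution.

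Step~3 points the wrong way. Taking $h=z$ above gives $W_{m,R}H_m(R)=m\,a(z,z)$. So the claim $H_m(R)\le1+(m-1)z'(R)$ needs an \emph{upper} bound on $a(z,z)$, whereas square completion and testing only bound it from below.

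The missing idea is the auxiliary function $\Xi=z'+\frac{1-rz}{m}$, which satisfies $(Tz)'=\frac mr\,\Xi$. The regular branch gives $z'(0^+)=-\frac1m$, hence $\Xi(0)=0$, and the Robin condition gives $\Xi(R)=0$. Moreover, $\Xi$ solves a second-order equation with positive zeroth-order coefficient and positive right-hand side $\frac{2(m-1)}{m(r^2+m)}$, so the maximum principle gives $\Xi>0$ on $(0,R)$. All four claims then follow:
\begin{itemize}
\item $Tz$ is increasing, so $Tz(R)>Tz(0^+)=\frac1m$. Together with $Tz(R)=\frac1m+\frac{m-1}{m}Rz(R)$, this gives $z(R)>0$.
\item Hopf's lemma gives $\Xi'(R)=\frac{m-1}{m^2}\bigl((m+R^2)z(R)-R\bigr)<0$. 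This is precisely the upper bound you were seeking in Step~2, and it also gives $Rz(R)<1$, hence $z'(R)<0$.
\item $H_m(R)=\E_{m,R}(Tz)\le Tz(R)=1+(m-1)z'(R)$ is the last claim.
\end{itemize}
That last inequality is a pointwise monotonicity fact, which is why the global energy identities you propose cannot reach it.
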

	
	\begin{proof}
		Write $z=z_{m,R}$ and define
		\[
		\Xi=z'+\frac{1-rz}{m}.
		\]
		
		\proofstep{1}{Endpoint values of $\Xi$.}
		
		\noindent\hspace*{2em}We first prove that $\Xi$ extends continuously to $[0,R]$ and vanishes at both endpoints.
		
		Since $z$ is on the regular branch, it is bounded near $0$, and \eqref{eq:z-ode} becomes $(w_mz')'=2w_mz-\frac{w_m}{r}$.
		The right-hand side is integrable at $0$, so $w_mz'$ has a finite limit there.
		Finite energy forces this limit to be zero. Otherwise $|z'|^2w_m$ would be comparable to $w_m^{-1}\sim r^{-m}$.
		Hence
		\[
		w_m(r)z'(r)=2\int_0^r w_m(s)z(s)\,ds-\int_0^r s^{m-1}e^{-\frac{s^2}{2}}\,ds=-\frac{r^m}{m}+O(r^{m+1}).
		\]
		Since $w_m(r)=r^m(1+O(r^2))$, we have $z'(r)=-\frac{1}{m}+O(r)$.
		Also $rz(r)=O(r)$, and therefore $\Xi(r)=O(r)$.
		Thus $\Xi$ extends continuously to $0$ with $\Xi(0)=0$.
		
		At $R$, the boundary condition gives $z'(R)=\frac{Rz(R)-1}{m}$, so
		\[
		\Xi(R)=\frac{Rz(R)-1}{m}+\frac{1-Rz(R)}m=0.
		\]
		
		\proofstep{2}{Positivity of $\Xi$ and monotonicity of $Tz$.}
		
		\noindent\hspace*{2em}Using \eqref{eq:z-ode}, direct differentiation gives
		\[
		(Tz)'=\frac mr\Xi.
		\]
		The same equation also gives
		\[
		-\Xi''+\Big(r-\frac mr+\frac{2r}{r^2+m}\Big)\Xi'+\Big(\frac m{r^2}+\frac{r^2+5m-2}{r^2+m}\Big)\Xi=\frac{2(m-1)}{m(r^2+m)}.
		\]
		The zeroth-order coefficient and the right-hand side are positive because $m>1$.
		Together with the endpoint values from Step~1, the maximum principle gives $\Xi>0$ on $(0,R)$.
		Consequently, $(Tz)'>0$ on $(0,R)$.
		
		\proofstep{3}{Boundary estimates for $z$.}
		
		\noindent\hspace*{2em}By Step~2, $Tz(R)>Tz(0)=\frac{1}{m}$.
		The boundary condition gives
		\[
		Tz(R)=\frac1m+\frac{m-1}{m}Rz(R),
		\]
		and hence $z(R)>0$.
		The Hopf lemma at the regular endpoint $R$ gives $\Xi'(R)<0$.
		Differentiating $\Xi$ and using \eqref{eq:z-ode} and \eqref{eq:z-boundary}, we obtain
		\[
		\Xi'(R)=\frac{m-1}{m^2} \bigl((m+R^2)z(R)-R\bigr),
		\]
		and therefore
		\[
		0<z(R)<\frac{R}{R^2+m}.
		\]
		In particular, $Rz(R)<1$, so the boundary condition gives $z'(R)<0$.
		Moreover,
		\[
		-z'(R)-m z'(R)^2-z(R)^2 =-\frac{z(R)}m\bigl((m+R^2)z(R)-R\bigr)\ge0.
		\]
		This is the required quadratic boundary estimate.
		
		\proofstep{4}{The estimate for $H_m(R)$.}
		
		\noindent\hspace*{2em}By Step~2, $Tz$ is increasing, so $H_m(R)=\E_{m,R}(Tz)\le Tz(R)$.
		The boundary condition gives $Tz(R)=1+(m-1)z'(R)$, and therefore
		\[
		1-H_m(R)\ge1-Tz(R)=-(m-1)z'(R).
		\]
		This completes the proof.
	\end{proof}
	
	$\lambda_m'$ is computed in the following.
	
	\begin{lemma}\label{lem:envelope}
		For $m>1$, the value $\lambda_m(R)$ is differentiable and
		\[
		\lambda_m'(R)=\frac{w_m(R)}{W_{m,R}} [mv_{m,R}'(R)^2+v_{m,R}(R)^2 -\lambda_m(R)(1-\lambda_m(R))].
		\]
	\end{lemma}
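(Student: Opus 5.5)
The plan is to treat $\lambda_m(R)=\min_v J_{m,R}(v)$ (Lemma~\ref{lem:saddle-reduction}) as a value function and apply an envelope argument. The point is that the $R$-dependence of $J_{m,R}$ enters only through the upper limit of integration and the normalization $W_{m,R}$, not through $v$ itself. To make the $R$-dependence explicit, I write $w=w_m$, $W=W_{m,R}$,
\[
A_R(v)=\int_0^R\Bigl(v'^2+v^2+\frac{(Tv)^2}{m-1}\Bigr)w\,dr,\qquad B_R(v)=\int_0^R (Tv)\,w\,dr,\qquad \tau=\frac{B_R(v)}{W},
\]
so that
\[
J_{m,R}(v)=\frac{A_R(v)}{W}-\frac{\tau^2}{m-1}+\frac{(1+\tau)^2}{m}.
\]
For a fixed $v$ that is $C^1$ up to and slightly beyond $R$, differentiating in $R$ (using $\partial_R W=w(R)$ and $\partial_R\tau=\frac{w(R)}{W}(b-\tau)$ with $b=Tv(R)$) gives
\[
\partial_R J_{m,R}(v)=\frac{w(R)}{W}\Bigl[v'(R)^2+v(R)^2+\frac{b^2}{m-1}-\frac{A_R(v)}{W}-\frac{2\tau(b-\tau)}{m-1}+\frac{2(1+\tau)(b-\tau)}{m}\Bigr].
\]

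\textbf{Step 1 (upper derivative).} By Step~2 of Lemma~\ref{lem:saddle-reduction}, $v_{m,R}\in H^2$ near $R$. I extend it past $R$ as the solution of the same ODE $(D^*D+2)v=-c/r$ with the Cauchy data at $R$; this extension is smooth on $[R,R+\delta]$. Then for every $R'$ near $R$,
\[
\lambda_m(R')\le J_{m,R'}(\text{extension}),
\]
with equality at $R'=R$. This bounds the upper Dini derivatives from the right and the lower Dini derivatives from the left by $\partial_R J_{m,R}(v_{m,R})$.

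\textbf{Step 2 (reverse inequality).} I use $\lambda_m(R)\le J_{m,R}(v_{m,R'}|_{(0,R)})$ for $R'$ near $R$, and apply the same derivative formula to $v_{m,R'}$. Closing the argument requires that $v_{m,R'}$, its derivative and its trace data at the endpoint depend continuously on $R'$ (uniformly near the endpoint), together with continuity of $\tau$ and $J$. I would obtain this from the explicit representation $v_{m,R}=-c_{m,R}z_{m,R}$. The function $z_{m,R}$ is the regular-branch solution $z_0$ of \eqref{eq:z-ode} plus a multiple $\kappa(R)\psi$ of the regular homogeneous solution $\psi$, and the coefficient $\kappa(R)$ is fixed by \eqref{eq:z-boundary}. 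The denominator in that boundary equation, $m\psi'(R)-R\psi(R)$, is nonzero by the uniqueness argument in Step~3 of Lemma~\ref{lem:saddle-reduction}. Hence $R\mapsto z_{m,R}$ is smooth, as are $H_m(R)$ and $c_{m,R}$. This smoothness step is where I expect the main technical care: the regular branch at $0$ must be handled uniformly in $R$. Alternatively, it directly gives differentiability of $\lambda_m=(1-H_m)/(m-H_m)$, with the envelope identity then identifying the derivative.

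\textbf{Step 3 (algebra).} I then simplify the bracket at $v=v_*$ using three relations:
\begin{itemize}
\item the first-variation identity tested with $v_*$ (Step~3 of Lemma~\ref{lem:saddle-reduction}), which gives
\[
\frac{A_R(v_*)}{W}-\frac{\tau^2}{m-1}=-\frac{(1+\tau)\tau}{m};
\]
\item the natural boundary condition
\[
v_*'(R)-\frac{b-\tau}{m-1}-\frac{1+\tau}{m}=0;
\]
\item the value identity $\lambda=J(v_*)=\frac{1+\tau}{m}$.
\end{itemize}
From the boundary condition, $\frac{b-\tau}{m-1}=v_*'(R)-\lambda$. Substituting everything, the terms in $b$ and $\tau$ should collapse to $m v_*'(R)^2-\lambda(1-\lambda)$. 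Specifically, one writes $b^2-2\tau b=(b-\tau)^2-\tau^2$ and expresses everything through $v_*'(R)$ and $\lambda$, which yields the stated formula. This last step is routine bookkeeping once the boundary condition is used to eliminate $b$.
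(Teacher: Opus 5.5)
Your proposal is correct. Its skeleton matches the paper's: a one-sided touching argument $\lambda_m(R')\le J_{m,R'}(\widetilde v)$, with equality at $R'=R$, identifies $\lambda_m'(R)$ once differentiability is known. Your Step~3 uses exactly the paper's three relations (tested first variation, natural boundary condition \eqref{eq:v-boundary}, and $1+\tau=m\lambda$) to reach $mv'(R)^2+v(R)^2-\lambda(1-\lambda)$. The paper continues $v_{m,R}$ linearly past $R$ rather than by the ODE, but only $C^1$ matching at $R$ matters, so either extension works.

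The genuine difference is how you obtain differentiability of $\lambda_m$.

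\textbf{The paper's route.} It rescales to $(0,1)$ and shows that the Euler map of $\widehat J_R$ is $C^1$ in $(R,V)$ with a coercive $V$-derivative. The Banach-space implicit function theorem then makes $R\mapsto V_R$ a $C^1$ map into $H^1((0,1),s^m\,ds)$.

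\textbf{Your explicit route.} This is more elementary. Fix a regular particular solution $z_0$ on $(0,\infty)$ (e.g.\ $z_{m,\infty}$, or any $z_{m,R_0}$ continued by the ODE) and the regular homogeneous solution $\psi$. Then $z_{m,R}=z_0+\kappa(R)\psi$ with $\kappa(R)=\frac{Rz_0(R)-1-mz_0'(R)}{m\psi'(R)-R\psi(R)}$. The denominator is nonzero by the energy identity in Step~3 of Lemma~\ref{lem:saddle-reduction}. Hence $H_m$, and so $\lambda_m=(1-H_m)/(m-H_m)$, is even $C^\infty$. This is the same decomposition the appendix records as $z_{m,R}=z_{m,\infty}+C_{m,R}\phi_m$.

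\textbf{Two remarks on your write-up.} Your worry about treating the origin uniformly in $R$ is moot, because all $R$-dependence sits in the scalar $\kappa(R)$ while $z_0$ and $\psi$ are fixed. This route also makes your two-sided envelope variant unnecessary. That variant would in any case need an extension of $v_{m,R'}$, not a restriction, when $R'<R$.

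\textbf{What each approach buys.} The paper's abstract argument yields continuity of the whole pulled-back minimizer without using the one-dimensional ODE structure, and the paper reuses it in Lemma~\ref{lem:profile-dependence}. Your representation could supply that continuity too, but only after a separate check that $g_{m,R}(R\,\cdot)$ is continuous in $L^2((0,1),s^m\,ds)$.
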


	\begin{proof}
		We divide the proof into three steps.
		
		\proofstep{1}{Prove the smooth dependence of the minimizer.}
		
		\noindent\hspace*{2em}Let
		\[
		X=H^1((0,1),s^m\,ds)\quad\text{and}\quad V_R(s)=v_{m,R}(Rs).
		\]
		Denote by $\widehat J_R$ the pullback of $J_{m,R}$ to $X$.
		Write 
		\[
		q_R(s)=\frac{R^{m+1}}{W_{m,R}}e^{-\frac{R^2s^2}{2}}\quad\text{and}\quad  \widehat{\E}_R F=\int_0^1 F(s)q_R(s)s^m\,ds.
		\]
		Under $r=Rs$, we have 
		\[
		d\mu_{m,R}(Rs)=q_R(s)s^m\,ds, \qquad v'(Rs)=R^{-1}V'(s) \quad\text{and}\quad Tv(Rs)=RsV(s)-R^{-1}V'(s).
		\]
		Therefore,
		\[
		\begin{aligned}
			\widehat J_R(V)={}&\widehat{\E}_R\bigl(R^{-2}V'^2+V^2\bigr)+\frac{\widehat{\E}_R\bigl[\bigl(RsV-R^{-1}V'\bigr)^2\bigr]-\bigl[\widehat{\E}_R\bigl(RsV-R^{-1}V'\bigr)\bigr]^2}{m-1}\\
			&+\frac{\bigl[1+\widehat{\E}_R\bigl(RsV-R^{-1}V'\bigr)\bigr]^2}{m}.
		\end{aligned}
		\]
		
		Thus the $R$-dependent coefficients are $q_R(s)$, $R^{-2}$, $Rs$, and $R^{-1}$, together with their products.
		Since $W_{m,R}'=w_m(R)$,
		\[
		\partial_R q_R(s)=q_R(s) \Big(\frac{m+1}{R}-\frac{w_m(R)}{W_{m,R}}-Rs^2\Big).
		\]
		Hence, on every compact interval of radii, $q_R$ is bounded above and below by positive constants, and both $q_R$ and $\partial_Rq_R$ are uniformly bounded on $[0,1]$.
		The remaining coefficients are smooth in $R$.
		Consequently, the Euler map
		\[
		\mathcal E:(0,\infty)\times X\longrightarrow X^*,\qquad
		\mathcal E(R,V)[H]=\frac d{dt}\Big|_{t=0}\widehat J_R(V+tH),\quad H\in X,
		\]
		is $C^1$.
		Moreover, the quadratic part of $\widehat J_R$ is uniformly coercive on $X$ since
		\[
		\widehat{\E}_R\bigl(R^{-2}H'^2+H^2\bigr) \ge c\|H\|_X^2
		\]
		for every $H\in X$, with $c>0$ locally uniform in $R$.
		
		For fixed $R$, the equation $\mathcal E(R,V)=0$ is an affine variational equation whose bilinear part is bounded and coercive.
		The Lax--Milgram theorem gives its unique solution $V_R$.
		The coercivity of the quadratic part makes $\widehat J_R$ strictly convex, so $V_R$ is also its unique minimizer.
		
		Fix $R_0>0$.
		We have $\mathcal E(R_0,V_{R_0})=0$, and $\mathcal E$ is $C^1$.
		Since $V\mapsto\mathcal E(R_0,V)$ is affine, its derivative $\partial_V\mathcal E(R_0,V_{R_0}):X\to X^*$ is the operator associated with twice the quadratic part of $\widehat J_{R_0}$.
		The coercivity estimate above and the Lax--Milgram theorem show that this derivative is a bounded isomorphism.
		
		The Banach-space implicit function theorem \cite[Theorem~15.1 and Corollary~15.1, pp.~148--150]{Deimling1985} therefore gives an interval $I$ with $R_0\in I$ and a $C^1$ map $\widetilde V:I\to X$ such that $\widetilde V(R_0)=V_{R_0}$ and $\mathcal E(R,\widetilde V(R))=0$ for every $R\in I$.
		For each $R\in I$, the uniqueness established above gives $\widetilde V(R)=V_R$.
		Since $R_0$ was arbitrary, $R\mapsto V_R$ is $C^1$ on $(0,\infty)$.
		
		Since $\widehat J_R$ is $C^1$ in $(R,V)$ and $\lambda_m(R)=\widehat J_R(V_R)$, $\lambda_m$ is differentiable.
		
		\proofstep{2}{Differentiation of $\lambda_m(R)$.}
		
		\noindent\hspace*{2em}Fix $R_0>0$ and set $v_0=v_{m,R_0}$.
		The endpoint regularity proved in Lemma~\ref{lem:saddle-reduction} gives well-defined traces $v_0(R_0)$ and $v_0'(R_0)$.
		Choose $0<\delta<\frac{R_0}{2}$ and extend $v_0$ to $(0,R_0+\delta)$ by
		\[
		\widetilde v(r)=
		\begin{cases}
			v_0(r),&r\le R_0,\\
			v_0(R_0)+v_0'(R_0)(r-R_0),&r>R_0.
		\end{cases}
		\]
		For $|R-R_0|<\delta$, let $\Psi(R)=J_{m,R}(\widetilde v|_{(0,R)})$.
		By \eqref{eq:dual-functional},
		\[
		\lambda_m(R)\le\Psi(R),
		\qquad \lambda_m(R_0)=\Psi(R_0).
		\]
		The matching extension makes $\widetilde v'^2+\widetilde v^2$, $T\widetilde v$, and $(T\widetilde v)^2$ integrable with respect to $w_m\,dr$ and continuous at $R_0$.
		Hence $\Psi$ is differentiable at $R_0$, since for every such fixed integrand $F$,
		\[
		\left.\frac d{dR}\E_{m,R}F\right|_{R=R_0}
		=\frac{w_m(R_0)}{W_{m,R_0}}\bigl(F(R_0)-\E_{m,R_0}F\bigr).
		\]
		Step~1 gives the differentiability of $\lambda_m$; the touching relation above therefore yields $\lambda_m'(R_0)=\Psi'(R_0)$.
		Applying the last identity to the three integrands and then writing $R=R_0$, $v=v_0=v_{m,R}$, $\tau=\E_{m,R}(Tv)$, and suppressing the subscripts on $\E_{m,R}$ and $\Var_{m,R}$, we obtain
		\begin{align*}
			\frac{W_{m,R}}{w_m(R)}\lambda_m'(R)={}&v'(R)^2+v(R)^2-\E(v'^2+v^2)\\
			&+\frac{(Tv(R)-\tau)^2-\Var(Tv)}{m-1}
			+\frac{2(1+\tau)(Tv(R)-\tau)}m.
		\end{align*}
		
		\proofstep{3}{Reduction to boundary data.}
		
		\noindent\hspace*{2em}Write $\lambda=\lambda_m(R)$.
		The boundary condition \eqref{eq:v-boundary} and \eqref{eq:J-scaling} give
		\[
		1+\tau=m\lambda \quad \text{and} \quad Tv(R)-\tau=(m-1)\bigl(v'(R)-\lambda\bigr).
		\]
		Moreover, $J(v)=\lambda$ and $1+\tau=m\lambda$ imply
		\[
		\E(v'^2+v^2)+\frac{\Var(Tv)}{m-1} =\lambda-m\lambda^2.
		\]
		Substitution into the identity from Step~2 gives
		\begin{align*}
			\frac{W_{m,R}}{w_m(R)}\lambda_m'(R)={}&v'(R)^2+v(R)^2-\lambda+m\lambda^2+(m-1)\bigl(v'(R)-\lambda\bigr)^2+2(m-1)\lambda\bigl(v'(R)-\lambda\bigr)\\
			={}&mv'(R)^2+v(R)^2-\lambda(1-\lambda),
		\end{align*}
		which is the claimed formula.
	\end{proof}
	
	Combining these results above, we obtain the main result in this part.
	
	\begin{lemma}
		\label{lem:finite-monotonicity}
		If $m>1$, then $R\mapsto\lambda_m(R)$ is nonincreasing.
	\end{lemma}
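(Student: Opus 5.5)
The plan is to show that $\lambda_m'(R)\le0$ for every $R>0$, and then to conclude by the mean value theorem. Lemma~\ref{lem:envelope} gives that $\lambda_m$ is differentiable on $(0,\infty)$, and since $w_m(R)/W_{m,R}>0$ it suffices to prove
\[
m v_{m,R}'(R)^2+v_{m,R}(R)^2\le\lambda_m(R)\bigl(1-\lambda_m(R)\bigr).
\]

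\textbf{Step 1: pass from $v$ to $z$.} By Lemma~\ref{lem:saddle-reduction}, $v_{m,R}=-c z$ with $z=z_{m,R}$ and $c=c_{m,R}=1-\lambda_m(R)$. Step~3 of that proof gives $c\ge 1-\frac1m>0$ and $m-H_m(R)=\frac{m-1}{c}>0$. Hence
\[
m v_{m,R}'(R)^2+v_{m,R}(R)^2=c^2\bigl(m z'(R)^2+z(R)^2\bigr)\le c^2\bigl(-z'(R)\bigr),
\]
where the inequality is the quadratic boundary estimate of Lemma~\ref{lem:boundary-estimates}.

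\textbf{Step 2: reduce to the estimate on $H_m(R)$.} Since $c>0$ and $1-\lambda_m(R)=c$, Step~1 shows it is enough to prove $c\,(-z'(R))\le\lambda_m(R)$. Substituting the formulas $c=\frac{m-1}{m-H_m(R)}$ and $\lambda_m(R)=\frac{1-H_m(R)}{m-H_m(R)}$ from \eqref{eq:c-lambda-formula}, and multiplying by the positive quantity $m-H_m(R)$, this becomes
\[
-(m-1)z'(R)\le 1-H_m(R).
\]
This is exactly the last inequality of Lemma~\ref{lem:boundary-estimates}.

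\textbf{Step 3: conclude.} Steps~1 and~2 give $\lambda_m'(R)\le0$ for all $R>0$. Since $\lambda_m$ is differentiable on $(0,\infty)$, the mean value theorem shows that it is nonincreasing.

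There is no real obstacle left at this point, because the analytic content sits in Lemmas~\ref{lem:boundary-estimates} and~\ref{lem:envelope}. The only points needing care are the signs of $c$ and of $m-H_m(R)$, which are used when dividing and multiplying in Step~2. Both are positive for $m>1$, as recorded in Step~3 of the proof of Lemma~\ref{lem:saddle-reduction}.
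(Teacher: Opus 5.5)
Your proposal is correct and follows essentially the same route as the paper. Both arguments write $v_{m,R}=-(1-\lambda_m(R))z_{m,R}$, apply the quadratic boundary estimate and the bound $1-H_m(R)\ge-(m-1)z_{m,R}'(R)$ from Lemma~\ref{lem:boundary-estimates} together with \eqref{eq:c-lambda-formula}, and then invoke the envelope formula of Lemma~\ref{lem:envelope}. The only differences are cosmetic: you take the positivity of $c_{m,R}$ and $m-H_m(R)$ from the saddle reduction rather than from the boundary estimates, and you spell out the final mean value theorem step.
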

	
	\begin{proof}
		By Lemma~\ref{lem:boundary-estimates}, \eqref{eq:c-lambda-formula} and $m-H_m(R)=(m-1)+(1-H_m(R))$, we have
		\[
		1-H_m(R)>0,\qquad0<\lambda_m(R)<1 \quad\text{and}\quad -z_{m,R}'(R) \le\frac{1-H_m(R)}{m-1} =\frac{\lambda_m(R)}{1-\lambda_m(R)}.
		\]
		This, together with $v_{m,R}=-(1-\lambda_m(R))z_{m,R}$ and Lemma~\ref{lem:boundary-estimates} again, implies that
		\[
		mv_{m,R}'(R)^2+v_{m,R}(R)^2 \le-(1-\lambda_m(R))^2z_{m,R}'(R) \le\lambda_m(R)(1-\lambda_m(R)),
		\]
		By Lemma~\ref{lem:envelope}, we have $\lambda_m'(R)\le0$.
	\end{proof}
	
	\subsection{The half-line value}
	
	This subsection computes $\lim_{R\to\infty}\lambda_m(R)$, which by Lemma~\ref{lem:finite-monotonicity} equals the infimum of $\lambda_m(R)$ over all $R>0$.
	
	\begin{lemma}\label{lem:half-line-value}
		For $m>1$,
		\[
		\lim_{R\to\infty}\lambda_m(R) =1-\frac2m \frac{\Gamma(\frac{m+1}{2})^2}{\Gamma(\frac m2)^2}.
		\]
	\end{lemma}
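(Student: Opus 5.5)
The plan is to pass to the half-line problem, show that it has the same saddle structure as in Lemma~\ref{lem:saddle-reduction} but with no boundary condition at infinity, and then evaluate the resulting constant exactly with Lemmas~\ref{lem:standard-laguerre-facts} and~\ref{lem:laguerre-resolvent}.

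\emph{Half-line functional.} On $V_m=H^1((0,\infty),w_m\,dr)$, with $\E_\infty$ and $\Var_\infty$ taken with respect to $w_m\,dr/W_{m,\infty}$, define $J_{m,\infty}$ by the formula \eqref{eq:J-functional}. The identity $T^*T=D^*D+(m+1)$, combined with the form-core property from Subsection~\ref{ssec:radial-tools}, gives
\[
\E_\infty(Tv)^2=\E_\infty\bigl(v'^2+(m+1)v^2\bigr) \quad (v\in V_m),
\]
because there is no endpoint term on the half-line. Hence $T:V_m\to L^2$ is bounded. I would then repeat Steps~1--3 of Lemma~\ref{lem:saddle-reduction} verbatim, dropping the boundary condition. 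Strict convexity and Lax--Milgram give a unique minimizer $v_\infty=-c_\infty z_\infty$, where
\[
z_\infty=(D^*D+2)^{-1}\tfrac1r
\]
is given by Lemma~\ref{lem:laguerre-resolvent}. This requires $\frac1r\in L^2(w_m\,dr)$, which holds exactly when $m>1$. Setting $H_\infty=\E_\infty(Tz_\infty)$, the same algebra as before yields
\[
\inf J_{m,\infty}=\frac{1-H_\infty}{m-H_\infty}.
\]

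\emph{Computation of $H_\infty$.} Since $T^*1=\frac mr$ and no boundary terms appear, $H_\infty=m\,\E_\infty\bigl(z_\infty\cdot\tfrac1r\bigr)$. Under $U$, with $s=r^2/2$, we have $U(\tfrac1r)=F:=2^{-1/2}s^{-1/2}$. By \eqref{eq:laguerre-half-moment-app},
\[
\widehat F_k=2^{-1/2}\frac{\Gamma(\nu-\frac12)}{\Gamma(\nu)}\frac{(\frac12)_k}{(\nu)_k},
\]
and Parseval together with \eqref{eq:laguerre-resolvent-app} gives
\[
\E_\infty\bigl(z_\infty\tfrac1r\bigr)=\frac12\sum_k\frac{(\nu)_k}{k!}\frac{\widehat F_k^2}{k+1}
=\frac14\frac{\Gamma(\nu-\frac12)^2}{\Gamma(\nu)^2}\sum_{k\ge0}\frac{(\frac12)_k^2}{(\nu)_k\,(k+1)!}.
\]
To sum the series, I would shift the index using $(a-1)_{k+1}=(a-1)(a)_k$. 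This turns it into
\[
\frac{\nu-1}{1/4}\Bigl[{}_2F_1\bigl(-\tfrac12,-\tfrac12;\nu-1;1\bigr)-1\Bigr],
\]
and Gauss's summation theorem evaluates the hypergeometric value as $\Gamma(\nu-1)\Gamma(\nu)/\Gamma(\nu-\frac12)^2$. Writing $G=\Gamma(\frac m2)/\Gamma(\frac{m+1}2)$ and $\nu-1=\frac{m-1}2$, this gives $H_\infty=m-\frac{m(m-1)}2G^2$. Then
\[
\frac{1-H_\infty}{m-H_\infty}=1-\frac{2}{mG^2}=\kappa_m.
\]

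\emph{Convergence $\lambda_m(R)\to\inf J_{m,\infty}$.} This is the step I expect to be the main obstacle. By Lemma~\ref{lem:finite-monotonicity} the limit exists, so it suffices to prove matching upper and lower bounds.

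For the upper bound, I would test $J_{m,R}$ with $v_\infty|_{(0,R)}$. The normalized measures $\mu_{m,R}$ converge to the half-line measure with Gaussian tails, and $v_\infty,v_\infty'$ and $Tv_\infty$ all lie in $L^2(w_m)$. Dominated convergence then gives $J_{m,R}(v_\infty)\to J_{m,\infty}(v_\infty)$.

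For the lower bound, I would use $\E(v_R'^2+v_R^2)\le J_{m,R}(v_R)\le\frac1m$, which bounds the minimizers $v_R$ uniformly in $H^1$. A diagonal argument then gives a subsequence converging weakly in $H^1((0,R_0),w_m\,dr)$ for every $R_0$, to a limit $v\in V_m$. The delicate terms are the mean $\E_{m,R}(Tv_R)$ and the variance term, since these are not weakly lower semicontinuous in an obvious way on growing domains. I would control them in two pieces.
\begin{itemize}
\item On $(0,R_0)$, compactness of the embedding $H^1\hookrightarrow L^2$ on bounded intervals handles these terms.
\item On the tail $(R_0,R)$, a uniform bound on $\E_{m,R}(Tv_R)^2$ is needed. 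The finite-interval analogue of the $T^*T$ identity gives this bound, because its boundary term $-R\,v(R)^2w_m(R)/W_{m,R}$ has a favorable sign after using $v_R=-c\,z_R$ and Lemma~\ref{lem:boundary-estimates}. Cauchy--Schwarz against the Gaussian tail mass then makes the tail contribution to the mean small.
\end{itemize}
With these estimates, weak lower semicontinuity of the convex quadratic parts gives $\liminf_R\lambda_m(R)\ge J_{m,\infty}(v)\ge\inf J_{m,\infty}$, which completes the proof.
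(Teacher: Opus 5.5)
Your proof is correct, but the convergence step takes a different route from the paper's.

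\textbf{How the paper passes to the limit.} The paper never sets up a half-line minimization. It passes to the limit in the linear Euler problem. It extends $z_{m,R}$ by the constant $z_{m,R}(R)$, proves $z_{m,R}\rightharpoonup z_{m,\infty}$, and then shows $H_m(R)\to m\langle\frac1r,z_{m,\infty}\rangle$ through the formula $H_m(R)=m\E_{m,R}\frac{z_{m,R}}{r}-\frac{w_m(R)}{W_{m,R}}z_{m,R}(R)$. This needs the endpoint bounds $0<z_{m,R}(R)\le m^{-1/2}$ and $0<-z_{m,R}'(R)\le\frac1m$ from Lemma~\ref{lem:boundary-estimates}. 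The value then follows from the closed formula \eqref{eq:c-lambda-formula}. The advantage of this route is that it reuses an identity already proved and only has to handle one linear problem.

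\textbf{How your argument differs.} You run a $\Gamma$-convergence argument for $J_{m,R}\to J_{m,\infty}$. You redo the Euler algebra on the half-line, where your identity makes $T$ bounded on $V_m$ and $T^*1=\frac mr$ holds with no endpoint term. This gives $\inf J_{m,\infty}=\frac{1-H_\infty}{m-H_\infty}$ directly.

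\textbf{What your route buys.} It needs no information at the endpoint $R$. The uniform bound on $\E_{m,R}(Tv_R)^2$ already follows from $J_{m,R}(v_R)\le\frac1m$, which gives $\Var_{m,R}(Tv_R)\le\frac{m-1}{m}$ and $|1+\tau_R|\le1$. In any case, the boundary term $-Rv(R)^2w_m(R)/W_{m,R}$ in the finite-interval $T^*T$ identity is nonpositive for every $v$. So your appeal to $v_R=-cz_R$ and Lemma~\ref{lem:boundary-estimates} is unnecessary.

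\textbf{Smaller simplifications.} On $(0,R_0)$, weak convergence alone gives convergence of the linear mean term, so compactness is not needed there. The quadratic parts are handled by lower semicontinuity, as you say. Since your two bounds hold along arbitrary sequences $R_j\to\infty$, you also do not need Lemma~\ref{lem:finite-monotonicity} to know the limit exists.

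\textbf{The series evaluation.} Your evaluation is shorter than the paper's. The paper writes the sum as $\int_0^1{}_2F_1(\frac12,\frac12;\nu;t)\,dt$ and integrates the hypergeometric equation, which forces it to control $t(1-t)F_\nu'(t)$ as $t\uparrow1$. Your shift $(a-1)_{k+1}=(a-1)(a)_k$ reduces the sum directly to Gauss's theorem for ${}_2F_1(-\frac12,-\frac12;\nu-1;1)$. This is legitimate because $\nu-1>0$ and $c-a-b=\nu>0$. Both routes give $\langle\frac1r,z_{m,\infty}\rangle=1-(\nu-1)\Gamma(\nu-\frac12)^2/\Gamma(\nu)^2$, hence the value $\kappa_m$.
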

	
	\begin{proof}
		We divide the proof into four steps.
		
		\proofstep{1}{The half-line problem.}
		
		\noindent\hspace*{2em}Define the probability measure
		\[
		d\mu_{m,\infty}(r)=\frac{w_m(r)}{W_{m,\infty}}\,dr.
		\]
		Because $m>1$, the function $\frac{1}{r}$ belongs to $L^2(\mu_{m,\infty})$.
		The bilinear form
		\[
		(z,h)\longmapsto \int_0^\infty(z'h'+2zh)\,d\mu_{m,\infty}
		\]
		is continuous and coercive on $H^1((0,\infty),\mu_{m,\infty})$.
		The Lax--Milgram theorem therefore gives a unique $z_{m,\infty}$ in this space such that
		\[
		\int_0^\infty(z_{m,\infty}'h'+2z_{m,\infty}h)\,d\mu_{m,\infty} =\int_0^\infty\frac h r\,d\mu_{m,\infty}
		\]
		for every $h\in H^1((0,\infty),\mu_{m,\infty})$.
		In the notation of Subsection~\ref{ssec:radial-tools}, this is precisely the finite-energy solution of $(D^*D+2)z=\frac1r$; finite energy selects the regular branch at the origin.
		
		\proofstep{2}{Pass from finite intervals to the half-line.}
		
		\noindent\hspace*{2em}Lemma~\ref{lem:boundary-estimates} and \eqref{eq:z-boundary} imply
		\[
		0<-z_{m,R}'(R)\le\frac1m, \qquad 0<z_{m,R}(R)\le\frac1{\sqrt m}.
		\]
		Testing the equation for $z_{m,R}$ with $z_{m,R}$ and using the regular behavior at $0$ gives
		\[
		\E_{m,R}(z_{m,R}'{}^2+2z_{m,R}^2) -\frac{w_m(R)}{W_{m,R}}z_{m,R}'(R)z_{m,R}(R) =\E_{m,R}\frac{z_{m,R}}r.
		\]
		The $L^2(\mu_{m,R})$ norms of $\frac{1}{r}$ are uniformly bounded for $R\ge1$.
		Cauchy--Schwarz therefore gives
		\[
		\sup_{R\ge1}\E_{m,R} (z_{m,R}'{}^2+z_{m,R}^2)<\infty.
		\]
		
		Extend each $z_{m,R}$ to $(0,\infty)$ by the constant value $z_{m,R}(R)$, and use the same symbol for this extension.
		It belongs to $H^1((0,\infty),\mu_{m,\infty})$, and
		\[
		\int_0^\infty(z_{m,R}'{}^2+z_{m,R}^2)\,d\mu_{m,\infty}=\frac{W_{m,R}}{W_{m,\infty}}\E_{m,R}(z_{m,R}'{}^2+z_{m,R}^2)+z_{m,R}(R)^2\mu_{m,\infty}((R,\infty)),
		\]
		so these extensions are bounded in the half-line Sobolev space.
		Given any sequence $R_j\to\infty$, take a weakly convergent subsequence.
		For every $h\in C_c^\infty(0,\infty)$, the support of $h$ lies in $(0,R_j)$ for all sufficiently large $j$, and the interior weak equation passes to the limit.
		The statement in Subsection~\ref{ssec:radial-tools} extends the resulting identity to every half-line Sobolev test function.
		The limit must therefore be the unique solution $z_{m,\infty}$ from Step~1.
		Since this argument applies to every sequence, the whole family converges weakly, that is
		\[
		z_{m,R}\rightharpoonup z_{m,\infty} \quad\text{in }H^1((0,\infty),\mu_{m,\infty}),\quad \text{as}\ R\to \infty.
		\]
		
		\proofstep{3}{The limit of $H_m(R)$.}
		
		\noindent\hspace*{2em}Since $w_m'=(\frac{m}{r}-r)w_m$ and $z_{m,R}w_m\to0$ at the origin, integration by parts gives
		\[
		H_m(R)=m\E_{m,R}\frac{z_{m,R}}r -\frac{w_m(R)}{W_{m,R}}z_{m,R}(R).
		\]
		The constant extension from Step~2 also gives
		\[
		\E_{m,R}\frac{z_{m,R}}r =\frac{W_{m,\infty}}{W_{m,R}} \Big[ \Big\langle\frac1r,z_{m,R}\Big\rangle_{L^2(\mu_{m,\infty})} -z_{m,R}(R)\int_R^\infty\frac1r\,d\mu_{m,\infty} \Big].
		\]
		Here $W_{m,R}\to W_{m,\infty}$ as $R\to\infty$.
		The second term in brackets tends to zero because $z_{m,R}(R)$ is bounded and $\frac{1}{r}$ is integrable on the probability space; the first converges by the weak convergence in Step~2.
		Finally, $\frac{w_m(R)z_{m,R}(R)}{W_{m,R}}\to0$ as $R\to\infty$.
		Thus
		\[
		H_m(R)\longrightarrow m\Big\langle\frac1r,z_{m,\infty} \Big\rangle_{L^2(\mu_{m,\infty})}.
		\]
		
		\proofstep{4}{Compute the half-line value.}
		
		\noindent\hspace*{2em}Recall that $\nu=\frac{m+1}{2}$ and set
		\[
		F(s)=(2s)^{-\frac12}\quad\text{and}\quad Z(s)=z_{m,\infty}(\sqrt{2s}).
		\]
		Under $s=\frac{r^2}{2}$ the normalized measures satisfy
		\[
		d\mu_{m,\infty}(r) =\frac{s^{\nu-1}e^{-s}}{\Gamma(\nu)}\,ds =d\eta_\nu(s).
		\]
		Thus $u(r)\mapsto u(\sqrt{2s})$ is unitary from $L^2(\mu_{m,\infty})$ to $L^2(\eta_\nu)$, and hence
		\[
		\Big\langle\frac1r,z_{m,\infty} \Big\rangle_{L^2(\mu_{m,\infty})} =\langle F,Z\rangle_{L^2(\eta_\nu)}.
		\]
		Moreover, the weak equation from Step~1 becomes
		\[
		(A_\nu+2)Z=F.
		\]
		
		Write $F=\sum_{k\ge0}\widehat F_kL_k^{\nu-1}$.
		By Lemma~\ref{lem:standard-laguerre-facts},
		\[
		\widehat F_k=\frac{k!}{(\nu)_k}\langle F,L_k^{\nu-1}\rangle_{L^2(\eta_\nu)}=\frac1{\sqrt2}\frac{\Gamma(\nu-\frac12)}{\Gamma(\nu)}\frac{(\frac12)_k}{(\nu)_k}.
		\]
		Lemma~\ref{lem:laguerre-resolvent} then gives
		\[
		Z=\frac12\sum_{k=0}^\infty \frac{\widehat F_k}{k+1}L_k^{\nu-1}.
		\]
		Finally, orthogonality and $\|L_k^{\nu-1}\|_{L^2(\eta_\nu)}^2=\frac{(\nu)_k}{k!}$ give
		\[
		\Big\langle\frac1r,z_{m,\infty}\Big\rangle_{L^2(\mu_{m,\infty})}=\frac12\sum_{k=0}^\infty\frac{(\nu)_k}{k!}\frac{\widehat F_k^2}{k+1}=\frac14\Big(\frac{\Gamma(\nu-\frac12)}{\Gamma(\nu)}\Big)^2\sum_{k=0}^\infty\frac{(\frac12)_k^2}{(\nu)_k\,k!\,(k+1)}.
		\]
		
		To evaluate this series above, set
		\[
		F_\nu(t)=\sum_{k=0}^\infty \frac{(\frac12)_k^2}{(\nu)_k\,k!}t^k ={}_2F_1\Big(\frac12,\frac12;\nu;t\Big).
		\]
		Its coefficients are positive, so monotone convergence and $(k+1)^{-1}=\int_0^1t^k\,dt$ show that the series above equals $\int_0^1F_\nu(t)\,dt$.
		The hypergeometric differential equation \cite[Eq.~(15.10.1)]{NISTDLMF} gives
		\[
		F_\nu(t)=4\frac d{dt} \Bigl[t(1-t)F_\nu'(t)+(\nu-1)F_\nu(t)\Bigr].
		\]
		Since $\nu>1$, Gauss's formula \cite[Eq.~(15.4.20)]{NISTDLMF} gives
		\[
		F_\nu(1)= \frac{\Gamma(\nu)\Gamma(\nu-1)} {\Gamma(\nu-\frac12)^2}.
		\]
		
		The coefficients of $F_\nu$ are nonnegative and sum to this finite value, so $t(1-t)F_\nu'(t)\to0$ as $t\uparrow1$.
		Indeed, after splitting the derivative series at $N$, its finite part tends to zero and its tail is bounded by $\sum_{k>N}\frac{(\frac12)_k^2}{(\nu)_k k!}$, uniformly in $t$, because $k(1-t)t^k\le1$.
		Integrating the preceding derivative identity from $0$ to $1$ yields
		\[
		\int_0^1F_\nu(t)\,dt =4(\nu-1)\Big( \frac{\Gamma(\nu)\Gamma(\nu-1)} {\Gamma(\nu-\frac12)^2}-1\Big).
		\]
		Therefore
		\[
		\Big\langle\frac1r,z_{m,\infty} \Big\rangle_{L^2(\mu_{m,\infty})} =1-(\nu-1) \Big(\frac{\Gamma(\nu-\frac12)}{\Gamma(\nu)}\Big)^2.
		\]
		Combining Step~3 with the preceding identity gives
		\[
		\lim_{R\to\infty}\bigl(m-H_m(R)\bigr)
		=m(\nu-1)\Big(\frac{\Gamma(\nu-\frac12)}
		{\Gamma(\nu)}\Big)^2>0.
		\]
		Hence the denominator in \eqref{eq:c-lambda-formula}
		stays away from zero for all sufficiently large $R$.
		Using $m-1=2(\nu-1)$ in the same formula now gives
		\[
		\lim_{R\to\infty}\lambda_m(R) =1-\frac2m \frac{\Gamma(\nu)^2}{\Gamma(\nu-\frac12)^2},
		\]
		which is the asserted value.
	\end{proof}
	
	\subsection{The case \texorpdfstring{$m=1$}{m=1}}
	
	Now we turn to the case $m=1$.
	
	\begin{lemma}
		\label{lem:first-order-endpoint}
		Every locally absolutely continuous solution of $Tz=c$ on $(0,R)$ has the form
		\[
		z(r)=e^{\frac{r^2}{2}} \Bigl(C-c\int_0^r e^{-\frac{s^2}{2}}\,ds\Bigr).
		\]
		On $(0,\infty)$, finite energy in $H^1((0,\infty),w_1\,dr)$ forces $C=c\int_0^\infty e^{-\frac{s^2}{2}}\,ds$, and therefore
		\[
		z(r)=ce^{\frac{r^2}{2}}\int_r^\infty e^{-\frac{s^2}{2}}\,ds.
		\]
	\end{lemma}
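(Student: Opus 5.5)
The plan is to treat $Tz=rz-z'=c$ as a linear first-order ODE and solve it with the integrating factor $e^{-\frac{r^2}{2}}$; then use finite energy with respect to $w_1(r)=re^{-\frac{r^2}{2}}$ to fix the constant.

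\emph{General solution.} For a locally absolutely continuous $z$ on $(0,R)$, the equation $z'-rz=-c$ holds a.e. Since $e^{-\frac{r^2}{2}}$ is smooth, the product $e^{-\frac{r^2}{2}}z$ is locally absolutely continuous, and
\[
\bigl(e^{-\frac{r^2}{2}}z\bigr)'=e^{-\frac{r^2}{2}}(z'-rz)=-ce^{-\frac{r^2}{2}}\quad\text{a.e.}
\]
Integrate from a fixed $r_0\in(0,R)$ and absorb the boundary value into the constant. Because the right-hand side is integrable up to $0$, the antiderivative can be based at $0$. This gives
\[
z(r)=e^{\frac{r^2}{2}}\Bigl(C-c\int_0^re^{-\frac{s^2}{2}}\,ds\Bigr)
\]
for some real $C$. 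Conversely, every such function solves the equation.

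\emph{Selecting $C$ on the half-line.} Set $C_0=c\int_0^\infty e^{-\frac{s^2}{2}}\,ds$ and write
\[
z=e^{\frac{r^2}{2}}(C-C_0)+ce^{\frac{r^2}{2}}\int_r^\infty e^{-\frac{s^2}{2}}\,ds .
\]

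\begin{itemize}
\item \emph{The second piece has finite energy.} By the Mills-ratio bound $e^{\frac{r^2}{2}}\int_r^\infty e^{-\frac{s^2}{2}}\,ds\le\min\{\sqrt{\pi/2},\,1/r\}$, it is bounded. Its derivative equals $r\cdot(\text{itself})-c$, which is also bounded. Both are therefore square-integrable against $w_1$, which has finite mass.
\item \emph{The first piece does not, unless $C=C_0$.} We have $\bigl(e^{\frac{r^2}{2}}\bigr)^2w_1=re^{\frac{r^2}{2}}$, which is not integrable at infinity.
\end{itemize}

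So if $C\neq C_0$, the function $z$ is a finite-energy function plus a nonzero multiple of a function whose $L^2(w_1\,dr)$ norm is infinite, and hence $z\notin L^2(w_1\,dr)$. Finite energy therefore forces $C=C_0$, which gives the stated formula.

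The only delicate point is justifying the integrating-factor step for merely locally absolutely continuous $z$, via the product rule for absolutely continuous functions, together with the Mills-ratio estimate. Neither is a serious obstacle.
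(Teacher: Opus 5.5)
Your proposal is correct and follows essentially the same route as the paper: the integrating factor $e^{-\frac{r^2}{2}}$, splitting off the Mills-ratio term $ce^{\frac{r^2}{2}}\int_r^\infty e^{-\frac{s^2}{2}}\,ds$, and observing that $e^{r^2}w_1=re^{\frac{r^2}{2}}$ is not integrable at infinity. The differences are cosmetic: you exclude $C\neq C_0$ by the triangle inequality in $L^2(w_1\,dr)$ instead of the paper's pointwise bound $|z|\ge ae^{\frac{r^2}{2}}$, and you use the global bound $\min\{\sqrt{\pi/2},1/r\}$ where the paper uses $O(r^{-1})$ at infinity together with smoothness near $0$.
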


	\begin{proof}
		Since $T=r-D$, the equation $Tz=c$ is $z'-rz=-c$.
		Hence
		\[
		\bigl(e^{-\frac{r^2}{2}}z(r)\bigr)'=-ce^{-\frac{r^2}{2}},
		\]
		and integration gives
		\[
		z(r)=e^{\frac{r^2}{2}}\Bigl(C-c\int_0^r e^{-\frac{s^2}{2}}\,ds\Bigr).
		\]
		This is
		\[
		z(r)=e^{\frac{r^2}{2}}\Bigl(C-c\int_0^\infty e^{-\frac{s^2}{2}}\,ds\Bigr)+ce^{\frac{r^2}{2}}\int_r^\infty e^{-\frac{s^2}{2}}\,ds.
		\]
		The second term is $O(r^{-1})$, since
		\[
		0\le e^{\frac{r^2}{2}}\int_r^\infty e^{-\frac{s^2}{2}}\,ds\le\frac1r, \quad \text{for}\ r>0.
		\]
		
		If the coefficient of the first term is nonzero, then $|z(r)|\ge a e^{\frac{r^2}{2}}$ for some $a>0$ and all sufficiently large $r$.
		Since $w_1(r)=re^{-\frac{r^2}{2}}$, this implies
		\[
		\int_0^\infty z(r)^2w_1(r)\,dr=\infty.
		\]
		Finite energy therefore forces $C=c\int_0^\infty e^{-\frac{s^2}{2}}\,ds$, which gives the asserted formula.
		For this choice, $z(r)=O(r^{-1})$ and $z'(r)=rz(r)-c=O(1)$ at infinity. These, together with the smoothness near $0$, give that $z\in H^1((0,\infty),w_1\,dr)$.
	\end{proof}

	For later reference, denote 
	\begin{equation}\label{eq:Lambda-one}
		\Lambda_1(R)=\inf_{\substack{v\in H^1((0,R),w_1\,dr)\\ Tv= \mathrm{constant}}}\Bigl\{\E_{1,R}(v'^2+v^2)+(1+Tv)^2\Bigr\}.
	\end{equation}
	
	\begin{lemma}
		\label{lem:m-one-continuity}
		For every fixed $R>0$,
		\[
		\Lambda_1(R)=\lim_{m\downarrow1}\lambda_m(R).
		\]
	\end{lemma}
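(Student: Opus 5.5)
The plan is a penalty (or $\Gamma$-convergence) argument. By Lemma~\ref{lem:saddle-reduction}, for $m>1$ we have $\lambda_m(R)=\inf_v J_{m,R}(v)$. In $J_{m,R}$ the term $\Var_{m,R}(Tv)/(m-1)$ is a penalty that, as $m\downarrow1$, forces $Tv$ to be constant. The remaining terms $\E_{m,R}(v'^2+v^2)+(1+\E_{m,R}Tv)^2/m$ converge formally to the functional in \eqref{eq:Lambda-one}. So I would prove $\limsup_{m\downarrow1}\lambda_m(R)\le\Lambda_1(R)$ and $\liminf_{m\downarrow1}\lambda_m(R)\ge\Lambda_1(R)$ separately. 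Throughout, $R$ is fixed and $m\in(1,2]$.

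\textbf{Upper bound.} Take any competitor $v\in H^1((0,R),w_1\,dr)$ with $Tv\equiv c$. On $(0,R)$ we have $r^m\le R^{m-1}r$, so $w_m\le R\,w_1$ and $v\in H^1((0,R),w_m\,dr)$. Hence $v$ is admissible for $J_{m,R}$. Since $\Var_{m,R}(Tv)=0$,
\[
\lambda_m(R)\le J_{m,R}(v)=\E_{m,R}(v'^2+v^2)+\frac{(1+c)^2}{m}.
\]
The densities $w_m/W_{m,R}$ converge to $w_1/W_{1,R}$ pointwise, with the domination $w_m\le R\,w_1$ and $W_{m,R}\to W_{1,R}$. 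Dominated convergence then gives $J_{m,R}(v)\to\E_{1,R}(v'^2+v^2)+(1+c)^2$. Taking the infimum over such $v$ yields the $\limsup$ inequality.

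\textbf{Lower bound.} Let $v_m=v_{m,R}$ be the minimizers from Lemma~\ref{lem:saddle-reduction}, and pass to a subsequence realizing the $\liminf$. Since $J_{m,R}(v_m)\le J_{m,R}(0)=1/m\le1$, we get two uniform bounds:
\[
\E_{m,R}(v_m'^2+v_m^2)\le1,\qquad \Var_{m,R}(Tv_m)\le m-1.
\]
Consequently $\E_{m,R}(Tv_m)^2$ is bounded, and $\tau_m=\E_{m,R}Tv_m$ is bounded.

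For each fixed $\eps>0$, the weights $w_m$ are uniformly comparable to $w_1$ on $(\eps,R)$. So $v_m$ is bounded in $H^1(\eps,R)$. A diagonal argument gives $v_m\rightharpoonup v$ weakly in $H^1(\eps,R)$ for every $\eps$, and strongly in $L^2(\eps,R)$; also $\tau_m\to c$. The penalty bound gives $\int_\eps^R(Tv_m-\tau_m)^2\,d\mu_{m,R}\to0$. Since $Tv_m\rightharpoonup Tv$ in $L^2(\eps,R)$, this yields $Tv=c$ a.e.\ on $(\eps,R)$ for every $\eps$, hence $Tv\equiv c$ on $(0,R)$. Weak lower semicontinuity on $(\eps,R)$, followed by $\eps\downarrow0$ and monotone convergence, gives
\[
\E_{1,R}(v'^2+v^2)\le\liminf_m \E_{m,R}(v_m'^2+v_m^2)\le1,
\]
so $v\in H^1((0,R),w_1\,dr)$ is admissible in \eqref{eq:Lambda-one}. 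Dropping the nonnegative variance term,
\[
\liminf_m\lambda_m(R)\ge\E_{1,R}(v'^2+v^2)+(1+c)^2\ge\Lambda_1(R).
\]

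\textbf{Main obstacle.} The delicate point is uniformity near the singular endpoint $r=0$, where $w_m\sim r^m$ degenerates differently for each $m$. Concretely, one must check two things: that $\tau_m$ really converges to the constant value $c$ of $Tv$ on compact subintervals, and that no mass escapes to $r=0$. For the first, I would use Cauchy--Schwarz,
\[
\Bigl|\int_0^\eps Tv_m\,d\mu_{m,R}\Bigr|\le\mu_{m,R}((0,\eps))^{1/2}\bigl(\E_{m,R}(Tv_m)^2\bigr)^{1/2},
\]
with $\mu_{m,R}((0,\eps))\le C\eps^2$ uniformly in $m\in(1,2]$. This shows the contribution near $0$ to $\tau_m$ is uniformly small, so $c=\lim_m\tau_m$ coincides with the value of $Tv$ on $(\eps,R)$. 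The same estimate handles the normalizations $W_{m,R}\to W_{1,R}$.
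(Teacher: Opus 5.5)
Your proposal is correct and follows essentially the same route as the paper. The upper bound tests $J_{m,R}$ with constrained competitors, for which the variance term vanishes, and uses dominated convergence of the densities. The lower bound takes $H^1(\eps,R)$-weak limits of the minimizers $v_{m,R}$, uses the penalty $\Var_{m,R}(Tv_{m,R})\le m-1$ to force $Tv=\lim\tau_m$, and finishes with weak lower semicontinuity and monotone convergence as $\eps\downarrow0$.

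Two small corrections. The domination should read $w_m\le\max\{1,R\}\,w_1$, since your $w_m\le R\,w_1$ fails when $R<1$. Also, the Cauchy--Schwarz estimate near $r=0$ in your last paragraph is unnecessary: $c$ is defined as $\lim\tau_m$, and $Tv=c$ follows directly from $Tv_m-\tau_m\to0$ in $L^2(\eps,R)$.
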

	
	\begin{proof}
		Fix any sequence $m_j>1$ with $m_j\to1$.
		Since $W_{m_j,R}\to W_{1,R}$,
		\[
		\frac{d\mu_{m_j,R}}{d\mu_{1,R}}(r)=\frac{W_{1,R}}{W_{m_j,R}}r^{m_j-1}\longrightarrow1 \qquad (0<r<R).
		\]
		The convergence is uniform on every $[\eta,R]$, and the displayed ratio is bounded above on $(0,R)$ by a constant depending only on $R$ for all sufficiently large $j$.
		
		\proofstep{1}{The lower bound.}
		
		\noindent\hspace*{2em}Choose indices $j_k$ such that $\lambda_{m_{j_k}}(R)$ converges to $\liminf_{j\to\infty}\lambda_{m_j}(R)$.
		Let $v_k$ minimize $J_{m_{j_k},R}$ and set $\tau_k=\E_{m_{j_k},R}(Tv_k)$.
		Since $J_{m_{j_k},R}(v_k)\le J_{m_{j_k},R}(0)=\frac1{m_{j_k}}$ and the three terms in $J_{m_{j_k},R}$ are nonnegative,
		\[
		\E_{m_{j_k},R}(v_k'^2+v_k^2)\le\frac1{m_{j_k}},\qquad
		\Var_{m_{j_k},R}(Tv_k)\le\frac{m_{j_k}-1}{m_{j_k}}\quad\text{and}\quad
		|1+\tau_k|\le1.
		\]
		After passing to a further subsequence, $\tau_k\to\ell$.
		For every $\eta\in(0,R)$, the density comparison above makes $(v_k)$ bounded in $H^1(\eta,R)$.
		A diagonal extraction gives a function $v$ such that $v_k\rightharpoonup v$ in $H^1(\eta,R)$ for every $\eta>0$.
		The variance estimate and the positive lower bound for the densities on $[\eta,R]$ give
		\[
		Tv_k-\tau_k\longrightarrow0 \quad\text{in }L^2(\eta,R).
		\]
		On the other hand, weak $H^1$ convergence implies $Tv_k\rightharpoonup Tv$ in $L^2(\eta,R)$.
		Hence $Tv=\ell$ on $(0,R)$.
		Uniform convergence of the densities on $[\eta,R]$ and weak lower semicontinuity yield
		\[
		\liminf_{j\to\infty}\lambda_{m_j}(R)\ge\frac1{W_{1,R}}\int_\eta^R(v'^2+v^2)w_1\,dr+(1+\ell)^2.
		\]
		As $\eta\downarrow0$, the truncated energies increase and remain bounded by the finite left-hand side.
		Monotone convergence therefore gives $v\in H^1((0,R),w_1\,dr)$ and
		\[
		\liminf_{j\to\infty}\lambda_{m_j}(R)\ge\E_{1,R}(v'^2+v^2)+(1+\ell)^2\ge\Lambda_1(R).
		\]
		
		\proofstep{2}{The upper bound.}
		
		\noindent\hspace*{2em}Let $\widetilde v\in H^1((0,R),w_1\,dr)$ satisfy $T\widetilde v=a$ for some constant $a$.
		The density bound above makes $\widetilde v$ admissible for $J_{m_j,R}$ for all sufficiently large $j$.
		Since $\E_{m_j,R}(T\widetilde v)=a$ and $\Var_{m_j,R}(T\widetilde v)=0$, dominated convergence gives
		\[
		\begin{aligned}
			\limsup_{j\to\infty}\lambda_{m_j}(R)
			\le\lim_{j\to\infty}J_{m_j,R}(\widetilde v)=\E_{1,R}(\widetilde v'^2+\widetilde v^2)+(1+a)^2.
		\end{aligned}
		\]
		Taking the infimum over the constraint set in \eqref{eq:Lambda-one} gives $\limsup_{j\to\infty}\lambda_{m_j}(R)\le\Lambda_1(R)$.
		This, together with Step~1, proves the result, since the sequence $(m_j)$ was arbitrary.
	\end{proof}
	
	\begin{lemma}\label{lem:m-one}
		For every $R>0$,
		\begin{equation}\label{eq:m-one-dual}
			\lambda_1(R)=\Lambda_1(R)=\inf_{\substack{v\in H^1((0,R),w_1\,dr)\\ Tv\ \mathrm{constant}}}\Bigl\{\E_{1,R}(v'^2+v^2)+(1+Tv)^2\Bigr\}\ge1-\frac2\pi,
		\end{equation}
		and $\lambda_1(R)\downarrow1-\frac2\pi$ as $R\to\infty$.
	\end{lemma}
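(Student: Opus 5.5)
The plan has four parts: show $\lambda_1(R)=\Lambda_1(R)$ directly at $m=1$, get the lower bound $1-\frac2\pi$ and monotonicity from the case $m>1$ via Lemma~\ref{lem:m-one-continuity}, and compute the half-line value with Lemma~\ref{lem:first-order-endpoint}.

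\textbf{Step 1: $\lambda_1(R)\le\Lambda_1(R)$ by weak duality.} Fix $v$, write $\tau=\E_{1,R}(Tv)$, and let $g$ satisfy $\E g=1$. Expanding the square gives
\[
\Phi_{1,R}(g,v)=\E(v'^2+v^2)+(1+\tau)^2+\Var(Tv)+2\E\bigl[(g-1)(Tv-\tau)\bigr].
\]
Hence $\sup_g\Phi_{1,R}(g,v)=+\infty$ unless $Tv$ is constant. When $Tv$ is constant, the supremum equals $\E(v'^2+v^2)+(1+Tv)^2$, independently of $g$. Therefore $\inf_v\sup_g\Phi_{1,R}=\Lambda_1(R)$, and always $\lambda_1(R)\le\Lambda_1(R)$.

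\textbf{Step 2: $\lambda_1(R)\ge\Lambda_1(R)$ via a degenerate saddle point.} By Lemma~\ref{lem:first-order-endpoint}, the constraint set in \eqref{eq:Lambda-one} is the two-parameter family
\[
v=e^{r^2/2}\Bigl(C-c\int_0^r e^{-s^2/2}\,ds\Bigr),
\]
each member of which lies in $H^1((0,R),w_1\,dr)$ since $R$ is finite. So $\Lambda_1(R)$ is a positive definite quadratic minimization in $(C,c)$, and it has a minimizer $v_*$ with $Tv_*=\ell$.

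I then look for a source $g_*$, with $\E g_*=1$, that acts as a Lagrange multiplier for the constraint. It must make the first variation of $\Phi_{1,R}(g_*,\cdot)$ vanish at $v_*$:
\[
\E(v_*'h'+v_*h)+\E\bigl[(g_*+\ell)Th\bigr]=0\quad\text{for all }h.
\]
\begin{itemize}
\item \emph{Interior equation.} This is the first-order ODE $T^*g_*=g_*'+g_*/r=-(D^*D+1)v_*-\ell/r$.
\item \emph{Behaviour at $0$.} Its homogeneous solution $1/r$ is not in $L^2(w_1\,dr)$, so the solution is fixed near $0$.
\item \emph{Remaining conditions.} The trace condition at $R$ and the normalization $\E g_*=1$ must then hold as well. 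These are exactly the two stationarity conditions of $v_*$ in the parameters $(C,c)$.
\end{itemize}
Checking this consistency is the main obstacle.

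Once $g_*$ is found, $v\mapsto\Phi_{1,R}(g_*,v)$ is strictly convex with critical point $v_*$. By Step 1 the constraint $Tv_*=\ell$ gives $\Phi_{1,R}(g_*,v_*)=\Lambda_1(R)$, hence $\lambda_1(R)\ge\inf_v\Phi_{1,R}(g_*,v)=\Lambda_1(R)$. If the multiplier construction fails, the fallback is to take $g=g_{m,R}$ from \eqref{eq:g-optimal} and let $m\downarrow1$, passing to weak limits of the sources.

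\textbf{Step 3: monotonicity and the lower bound.} By Lemma~\ref{lem:m-one-continuity}, $\Lambda_1(R)=\lim_{m\downarrow1}\lambda_m(R)$. Each $\lambda_m$ with $m>1$ is nonincreasing by Lemma~\ref{lem:finite-monotonicity}, so the pointwise limit $\Lambda_1$ is nonincreasing in $R$. Moreover, Lemmas~\ref{lem:finite-monotonicity} and \ref{lem:half-line-value} give $\lambda_m(R)\ge\kappa_m$ for $m>1$. Letting $m\downarrow1$ gives
\[
\Lambda_1(R)\ge\lim_{m\downarrow1}\kappa_m=1-\frac{2\Gamma(1)^2}{\Gamma(\tfrac12)^2}=1-\frac2\pi .
\]

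\textbf{Step 4: the limit as $R\to\infty$.} Take the half-line competitor from Lemma~\ref{lem:first-order-endpoint}, namely $v=c\psi$ with $\psi(r)=e^{r^2/2}\int_r^\infty e^{-s^2/2}\,ds$, and restrict it to $(0,R)$. It has $Tv=c$, so
\[
\Lambda_1(R)\le c^2\,\E_{1,R}(\psi'^2+\psi^2)+(1+c)^2 .
\]
As $R\to\infty$, the expectation tends to $A:=\E_{1,\infty}(\psi'^2+\psi^2)$, which is finite. Optimizing over $c$ gives $\limsup_{R\to\infty}\Lambda_1(R)\le\frac{A}{1+A}$.

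It remains to compute $A=\frac\pi2-1$, so that $\frac{A}{1+A}=1-\frac2\pi$. I expect this to follow by integrating by parts with $\psi'=r\psi-1$ and $\int_0^\infty e^{-s^2/2}\,ds=\sqrt{\pi/2}$. Combined with the lower bound from Step 3, this gives $\lambda_1(R)\downarrow1-\frac2\pi$.
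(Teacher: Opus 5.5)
Your proposal is correct and follows the paper's proof essentially step for step. It uses weak duality for $\lambda_1(R)\le\Lambda_1(R)$, a Lagrange-multiplier source that turns the constrained minimizer into a saddle point, the lower bound and monotonicity inherited from $m>1$ through Lemma~\ref{lem:m-one-continuity}, and the Mills profile for the upper bound; its energy is indeed $\frac\pi2-1$, as in \eqref{eq:Mills-energy}.

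The consistency check you flag as the main obstacle takes two lines, and it is exactly how the paper verifies its explicit multiplier $g_{1,R}=(2-\mathcal Z_{1,R})/N_R$. After the interior equation is imposed, the Euler expression reduces to $\frac{w_1(R)}{W_{1,R}}h(R)\bigl(v_*'(R)-g_*(R)-\ell\bigr)$. Evaluating it at $h=e^{r^2/2}$, which spans $\ker T$, gives the trace condition at $R$ from stationarity in $C$. Testing the now-valid identity against a solution of $Th=1$ then gives $\E_{1,R}g_*=1$ from stationarity in $c$.
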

	
	\begin{proof}
		We divide the proof into four steps.
		
		\proofstep{1}{Reduction to the constrained problem.}
		
		\noindent\hspace*{2em}When $m=1$, the quadratic terms in $g$ cancel:
		\[
		\Phi_{1,R}(g,v) =\E_{1,R}(v'^2+v^2)+1+\E_{1,R}\big[(Tv)^2\big] +2\E_{1,R}(gTv).
		\]
		On the affine hyperplane $\E_{1,R}g=1$, the last term is bounded above precisely when $Tv$ is constant; in that case the supremum equals the expression on the right-hand side of \eqref{eq:m-one-dual}.
		The minimax inequality therefore gives $\lambda_1(R)\le\Lambda_1(R)$; the reverse inequality follows from the saddle verified in Step~2.
		
		\proofstep{2}{Construction and verification of the saddle.}
		
		\noindent\hspace*{2em}The affine set $\{z\in H^1((0,R),w_1\,dr):Tz=1\}$ is nonempty and closed.
		Since the energy $\E_{1,R}(z'^2+z^2)$ is coercive and strictly convex, it has a unique minimizer $z_{1,R}$.
		Set
		\[
		N_R=1+\E_{1,R}(z_{1,R}'^2+z_{1,R}^2), \quad \mathcal Z_{1,R}(r)=\frac1r\int_0^r s z_{1,R}(s)\,ds, \quad \mathcal Z_{1,R}(0)=0 \quad \text{and} \quad g_{1,R}=\frac{2-\mathcal Z_{1,R}}{N_R}.
		\]
		Then $Tz_{1,R}=1$ and Lemma~\ref{lem:first-order-endpoint} give, for some constant $C_{1,R}$,
		\[
		z_{1,R}(r)=e^{\frac{r^2}{2}} \Bigl(C_{1,R}-\int_0^r e^{-\frac{s^2}{2}}\,ds\Bigr).
		\]
		Hence $z_{1,R}$, $\mathcal Z_{1,R}$, and $g_{1,R}$ are bounded on $[0,R]$, so $g_{1,R}$ is admissible in \eqref{eq:lambda-definition}.
		
		Since $Tz_{1,R}=1$, the formal identities in \eqref{eq:adjoints} give, in the interior of $(0,R)$,
		\[
		(D^*D+2)z_{1,R}=\frac1r,\qquad T^*\mathcal Z_{1,R}=z_{1,R}.
		\]
		The feasible variations in this constrained minimization belong to $\ker T$, which is spanned by $e^{\frac{r^2}{2}}$.
		Stationarity in this direction, followed by integration by parts, gives
		\[
		w_1(R)e^{\frac{R^2}{2}} \bigl(z_{1,R}'(R)+1-\mathcal Z_{1,R}(R)\bigr)=0.
		\]
		Thus $z_{1,R}'(R)+1-\mathcal Z_{1,R}(R)=0$.
		A second integration by parts yields
		\begin{equation}\label{eq:m-one-abstract-bilinear}
			\E_{1,R}(z_{1,R}'h'+z_{1,R}h) =\E_{1,R}\big[(1-\mathcal Z_{1,R})Th\big],
		\end{equation}
		for every $h\in H^1((0,R),w_1\,dr)$.
		Taking $h=z_{1,R}$ shows that $N_R=2-\E_{1,R}\mathcal Z_{1,R}$, and hence $\E_{1,R}g_{1,R}=1$.
		
		Put $v_*=-\frac{z_{1,R}}{N_R}$.
		Identity \eqref{eq:m-one-abstract-bilinear} is the Euler equation of the strictly convex map $v\mapsto\Phi_{1,R}(g_{1,R},v)$ at $v_*$.
		Thus $v_*$ is its unique minimizer.
		Since $Tv_*=-\frac1{N_R}$ is constant, $\Phi_{1,R}(g,v_*)$ has the same value for every $g$ of mean one.
		These two saddle inequalities give
		\[
		\lambda_1(R)=\Phi_{1,R}(g_{1,R},v_*) =1-\frac1{N_R}.
		\]
		This also establishes the equality in \eqref{eq:m-one-dual}.
		
		\proofstep{3}{Lower bound and monotonicity of $N_R$.}
		
		\noindent\hspace*{2em}Combining Lemma~\ref{lem:m-one-continuity} with the equality $\lambda_1(R)=\Lambda_1(R)$ just proved gives
		\[
		1-\frac1{N_R} =\lambda_1(R)=\lim_{m\downarrow1}\lambda_m(R).
		\]
		Lemma~\ref{lem:finite-monotonicity} and Lemma~\ref{lem:half-line-value} now imply
		\[
		1-\frac1{N_R}\ge \lim_{m\downarrow1} \Biggl[1-\frac2m \frac{\Gamma(\frac{m+1}{2})^2}{\Gamma(\frac m2)^2}\Biggr] =1-\frac2\pi.
		\]
		Hence $N_R\ge\frac\pi2$, equivalently $\lambda_1(R)\ge1-\frac2\pi$.
		
		If $0<R_1<R_2$, Lemma~\ref{lem:finite-monotonicity} gives $\lambda_m(R_1)\ge\lambda_m(R_2)$ for every $m>1$.
		Passing to the limit $m\downarrow1$ yields
		\[
		1-\frac1{N_{R_1}}\ge1-\frac1{N_{R_2}}.
		\]
		Since $x\mapsto1-\frac1x$ is strictly increasing on $(0,\infty)$, $N_{R_1}\ge N_{R_2}$.
		Thus $R\mapsto N_R$ is nonincreasing.
		
		\proofstep{4}{Upper bound and limit of $N_R$.}
		
		\noindent\hspace*{2em}For the matching upper bound, use the Mills profile $M$ from Lemma~\ref{lem:first-order-endpoint}, which satisfies $TM=1$.
		Since $M'=rM-1$, Fubini's theorem and one integration by parts give
		\[
		\int_0^\infty M^2r e^{-\frac{r^2}{2}}dr =2-\frac\pi2, \qquad \int_0^\infty M e^{-\frac{r^2}{2}}dr =1.
		\]
		Differentiating $M'=rM-1$ gives the pointwise differential identity
		\[
		-M''+\Bigl(r-\frac1r\Bigr)M'+2M=\frac1r, \qquad r>0.
		\]
		This is an identity for the differential expression, not an equation in the domain of the half-line self-adjoint operator. Indeed, $\frac1r\notin L^2((0,\infty),w_1\,dr)$.
		Multiply the identity by $M$ on a compact subinterval and integrate by parts.
		The explicit Mills formula gives $M(0)<\infty$, $M'(0)=-1$, and $M(r)=r^{-1}+O(r^{-3})$ as $r\to\infty$, so the boundary terms vanish as the subinterval exhausts $(0,\infty)$.
		We obtain
		\begin{equation}\label{eq:Mills-energy}
			\int_0^\infty(M'^2+M^2)r e^{-\frac{r^2}{2}}\,dr=\frac\pi2-1.
		\end{equation}
		The restriction of $M$ to $(0,R)$ still satisfies $TM=1$ exactly.
		It is therefore admissible in the constrained minimization defining $z_{1,R}$.
		By the minimality of $z_{1,R}$ and \eqref{eq:Mills-energy},
		\[
		N_R =1+\E_{1,R}(z_{1,R}'{}^2+z_{1,R}^2) \le1+\E_{1,R}(M'^2+M^2),
		\]
		while \eqref{eq:Mills-energy} and $W_{1,R}\to W_{1,\infty}=1$ give
		\[
		1+\E_{1,R}(M'^2+M^2)\longrightarrow\frac\pi2.
		\]
		Hence $\limsup_{R\to\infty}N_R\le\frac\pi2$.
		Together with Step~3, this proves
		\[
		N_R\downarrow\frac\pi2.
		\]
		Finally,
		\[
		\lambda_1(R)=1-\frac1{N_R} \downarrow1-\frac2\pi,
		\]
		as asserted.
	\end{proof}
	
	\begin{proof}[Proof of Theorem~\ref{thm:radial-value}]
		For $m>1$, Lemma~\ref{lem:finite-monotonicity} and Lemma~\ref{lem:half-line-value} give $\lambda_m(R)\ge\kappa_m$ and \eqref{eq:radial-limit}.
		The endpoint is Lemma~\ref{lem:m-one}.
		The attainment assertion follows from Lemma~\ref{lem:saddle-reduction} when $m>1$ and from the saddle construction in the proof of Lemma~\ref{lem:m-one} when $m=1$.
	\end{proof}
	
	This completes the one-dimensional argument needed for the geometric lifting.
	The closed forms of the maximizing right-hand sides play no role below and are recorded separately in Appendix~\ref{app:explicit-profiles} for completeness.
	
	\section{\bf The lower bound by raywise lifting}\label{lift}
	
	In this part, we give the required lower bound. We first isolate the radial part of the Hessian, then verify that the one-dimensional optimizing profiles can be selected measurably, and finally assemble those profiles along the rays of the convex body.
	
	\begin{lemma}
		\label{lem:polar-hessian}
		Let $\Omega\subset\R^n$ be open, $m=n-1$,  $u\in H^2_{\mathrm{loc}}(\Omega)$, and suppose that $Lu=g$ almost everywhere in $\Omega$.
		At $x=r\theta\in\Omega\setminus\{0\}$, let
		\[
		u_r(r\theta)=\partial_r[u(r\theta)] \quad\text{and}\quad u_{rr}(r\theta)=\partial_r^2[u(r\theta)]
		\]
		denote the radial derivatives.
		Then, almost everywhere,
		\[
		\|\Hess u\|_{\HS}^2+|\nabla u|^2 \ge u_{rr}^2+u_r^2+\frac{(g+ru_r-u_{rr})^2}{m}.
		\]
	\end{lemma}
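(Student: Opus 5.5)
The plan is a pointwise linear-algebra argument at almost every $x=r\theta\in\Omega\setminus\{0\}$. Since $u\in H^2_{\mathrm{loc}}(\Omega)$, the function $u$ and its weak derivatives up to order two are defined almost everywhere. On the open set $\Omega\setminus\{0\}$ the chain rule for Sobolev functions composed with the smooth polar map gives the expected formulas almost everywhere. I will therefore treat $\nabla u(x)$ and $\Hess u(x)$ as a fixed vector and a fixed symmetric matrix. Because $\{0\}$ is a null set, excluding it costs nothing.

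First I choose an orthonormal basis $e_1=\theta,e_2,\dots,e_n$ of $\R^n$, where $e_2,\dots,e_n$ span $T_\theta\Sph^{n-1}$. In this basis $u_r=\langle\nabla u,\theta\rangle$. Since $\partial_r^2[u(r\theta)]=\langle\Hess u(r\theta)\theta,\theta\rangle$, we have $u_{rr}=H_{11}$, where $H=\Hess u(x)$.

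The gradient term is handled by $|\nabla u|^2\ge\langle\nabla u,\theta\rangle^2=u_r^2$. For the Hessian I keep only the diagonal entries:
\[
\|H\|_{\HS}^2=\sum_{i,j}H_{ij}^2\ge H_{11}^2+\sum_{i=2}^nH_{ii}^2 .
\]
Applying Cauchy--Schwarz to the $m=n-1$ tangential diagonal entries gives
\[
\sum_{i=2}^nH_{ii}^2\ge\frac1m\Bigl(\sum_{i=2}^nH_{ii}\Bigr)^2=\frac{(\Delta u-u_{rr})^2}{m}.
\]

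The last step expresses the tangential trace through $g$. From $Lu=g$ almost everywhere we get $\Delta u=g+\langle x,\nabla u\rangle=g+ru_r$, using $\langle x,\nabla u\rangle=r\langle\theta,\nabla u\rangle$. Hence $\Delta u-u_{rr}=g+ru_r-u_{rr}$. Adding the three lower bounds yields the claimed inequality almost everywhere.

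The only delicate point is justifying the almost-everywhere identification of $u_r$ and $u_{rr}$ with $\langle\nabla u,\theta\rangle$ and $\langle\Hess u\,\theta,\theta\rangle$ for an $H^2_{\mathrm{loc}}$ function. I would handle it by approximating $u$ by smooth functions in $H^2$ on compact subsets of $\Omega\setminus\{0\}$. The polar change of variables is a local diffeomorphism there, so the identities pass to the limit in $L^2_{\mathrm{loc}}$ and hence hold almost everywhere. Everything else is pointwise algebra.
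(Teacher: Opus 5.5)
Your proposal is correct and is essentially the paper's proof. Both use an orthonormal frame adapted to $\theta$, discard the off-diagonal Hessian entries, apply Cauchy--Schwarz to the $m$ tangential diagonal entries (whose sum is $\Delta u-u_{rr}=g+ru_r-u_{rr}$ by $Lu=g$), and use $|\nabla u|^2\ge u_r^2$. Your extra remark identifying $u_r$ and $u_{rr}$ almost everywhere with $\langle\nabla u,\theta\rangle$ and $\langle\Hess u\,\theta,\theta\rangle$ is a valid justification that the paper leaves implicit here and supplies later by Sobolev slicing in the proof of Theorem~\ref{thm:sharp-local}.
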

	
	\begin{proof}
		
		Complete $\theta$ to an orthonormal frame $\theta,e_1,\ldots,e_m$, with the last $m$ vectors tangent to the sphere.
		The radial entry of the Hessian is $u_{rr}$.
		Since $Lu=\Delta u-x\cdot\nabla u=g$ and $x\cdot\nabla u=ru_r$,
		\[
		\sum_{i=1}^m\Hess u(e_i,e_i) =\Delta u-u_{rr}=g+ru_r-u_{rr}.
		\]
		Cauchy--Schwarz applied only to these $m$ diagonal tangential entries gives
		\[
		\|\Hess u\|_{\HS}^2 \ge u_{rr}^2+\frac{(g+ru_r-u_{rr})^2}{m}.
		\]
		Since $|\nabla u|^2\ge u_r^2$, this proves the claim.
	\end{proof}
	
	For $K\in\mathcal K_{+,o}^n$, let $\rho_K$ be the radial function defined in Section~\ref{sec:prelim}.
	For every $R>0$, choose a maximizer $g_{m,R}$ supplied by Theorem~\ref{thm:radial-value}.
	Since it is an admissible maximizer, $\E_{m,R}g_{m,R}=1$ and, for every $v\in H^1((0,R),w_m\,dr)$,
	\begin{equation}\label{eq:profile-inequality}
		\E_{m,R}\Bigl[v'^2+v^2+\frac{(g_{m,R}+Tv)^2}{m}-(g_{m,R}-1)^2\Bigr]=\Phi_{m,R}(g_{m,R},v)\ge\lambda_m(R)\ge\kappa_m.
	\end{equation}
	No closed-form expression for $g_{m,R}$ is needed below. The next lemma provides the measurability and local $L^2$ bounds required for the raywise construction.
	
	We record the parameter dependence needed to assemble these profiles.
	
	\begin{lemma}
		\label{lem:profile-dependence}
		Fix $m\ge1$ and $0<a<b<\infty$.
		The profiles may be chosen so that
		\begin{equation}\label{eq:profile-L2-continuity}
			R\longmapsto g_{m,R}(R\,\cdot)
		\end{equation}
		is continuous from $[a,b]$ into $L^2((0,1),s^m\,ds)$.
		Moreover, there is a Borel function
		\[
		G:[a,b]\times(0,1)\longrightarrow\R.
		\]
		For every $R$, the function $G(R,\cdot)$ represents $g_{m,R}(R\,\cdot)$.
		Consequently, the profiles have representatives for which $(R,r)\mapsto g_{m,R}(r)$ is Borel measurable when $a\le R\le b$ and $0<r<R$.
		Finally,
		\begin{equation}\label{eq:profile-uniform-L2}
			\sup_{a\le R\le b}\E_{m,R}g_{m,R}^2<\infty.
		\end{equation}
	\end{lemma}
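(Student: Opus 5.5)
The plan is to use the explicit characterization of the maximizers from Section~\ref{sec:one-dim} and show that the rescaled profiles depend continuously on $R$. For $m>1$ I take $g_{m,R}$ to be the formula \eqref{eq:g-optimal}, $g_{m,R}=(m-Tz_{m,R})/(m-H_m(R))$. For $m=1$ I take $g_{1,R}=(2-\mathcal Z_{1,R})/N_R$ from the proof of Lemma~\ref{lem:m-one}. Both are admissible maximizers by Theorem~\ref{thm:radial-value}.

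For $m>1$, pull back to the fixed space $X=H^1((0,1),s^m\,ds)$ exactly as in Step~1 of Lemma~\ref{lem:envelope}. The map $R\mapsto V_R=v_{m,R}(R\cdot)$ is $C^1$ into $X$, and $v_{m,R}=-c_{m,R}z_{m,R}$ with $c_{m,R}=1-\lambda_m(R)\ge 1/m>0$. Hence $Z_R=z_{m,R}(R\cdot)=-V_R/(1-\lambda_m(R))$ is continuous into $X$, because $\lambda_m$ is differentiable.

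Next, $(Tz_{m,R})(Rs)=RsZ_R(s)-R^{-1}Z_R'(s)$. Multiplication by $Rs$ and $R^{-1}$ is continuous from $X$ into $L^2((0,1),s^m\,ds)$, jointly in $(R,Z)$ on $[a,b]$. So $R\mapsto (Tz_{m,R})(R\cdot)$ is continuous into $L^2((0,1),s^m\,ds)$.

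The number $H_m(R)=\widehat\E_R(Tz)$ is then continuous in $R$, since $q_R$ is continuous and bounded. Its denominator satisfies $m-H_m(R)=(m-1)/c_{m,R}\ge m-1>0$, because $c_{m,R}\le1$. This gives \eqref{eq:profile-L2-continuity}.

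The uniform bound \eqref{eq:profile-uniform-L2} follows from continuity on the compact set $[a,b]$. Indeed, $\E_{m,R}g^2=\int_0^1 g(Rs)^2q_R(s)s^m\,ds$, and $q_R$ is bounded on $[a,b]\times[0,1]$.

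For $m=1$ I argue directly and avoid the implicit function theorem. By Lemma~\ref{lem:first-order-endpoint}, $z_{1,R}(r)=e^{r^2/2}(C_R-\int_0^re^{-s^2/2}ds)$. The constant $C_R$ minimizes a strictly convex quadratic in $C$ whose coefficients are integrals over $(0,R)$ depending continuously on $R$. Explicitly, $C_R=-\beta(R)/\alpha(R)$, where $\alpha(R)=\E_{1,R}(\phi'^2+\phi^2)>0$ with $\phi=e^{r^2/2}$, and $\beta(R)$ is the corresponding cross term. Hence $C_R$ is continuous.

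It follows that $(R,s)\mapsto z_{1,R}(Rs)$ and $\mathcal Z_{1,R}(Rs)$ are jointly continuous and bounded on $[a,b]\times[0,1]$. Also $N_R\ge\pi/2$ and $N_R$ is continuous. So $G(R,s)=g_{1,R}(Rs)$ is jointly continuous, which gives continuity, Borel measurability and the uniform bound at once.

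The main obstacle is the Borel representative $G$ for $m>1$, since a continuous map into $L^2$ does not automatically yield a jointly measurable function. The first thing I would try is to approximate uniformly: choose a finite grid $R_k^{(j)}$ of mesh $2^{-j}$, and let $G_j(R,\cdot)$ be the piecewise-linear interpolation in $R$ of fixed Borel representatives of $g_{m,R_k}(R_k\cdot)$. Each $G_j$ is Borel on $[a,b]\times(0,1)$. Uniform continuity on $[a,b]$ gives $\sup_R\|G_j(R,\cdot)-g_{m,R}(R\cdot)\|_{L^2}\to0$, and after passing to a subsequence, $\sum_j\sup_R\|G_{j+1}-G_j\|<\infty$.

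Then set $G=\limsup_j G_j$, which is Borel. For each fixed $R$ the series converges a.e. in $s$, so $G(R,\cdot)=g_{m,R}(R\cdot)$ a.e.

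Finally, $g_{m,R}(r)=G(R,r/R)$ is Borel in $(R,r)$ as a composition with a continuous map. This gives the last claim of Lemma~\ref{lem:profile-dependence}.
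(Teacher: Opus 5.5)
Your proposal is correct and follows the paper's proof essentially step by step: the rescaled minimizers depend continuously on $R$ via the implicit-function argument of Lemma~\ref{lem:envelope} for $m>1$, the case $m=1$ uses the explicit strictly convex quadratic in $C$, and the Borel realization comes from uniform-in-$R$ approximation with summable $L^2$ differences (your piecewise-linear interpolation in $R$ replaces the paper's finite nets). There are two harmless slips. First, $J(v_*)\le J(0)=\frac1m$ gives $c_{m,R}\ge 1-\frac1m$, not $c_{m,R}\ge\frac1m$, which is false for $1<m<2$; however, you only need $c_{m,R}>0$. Second, set $G=0$ wherever the $\limsup$ is infinite so that $G$ is real-valued.
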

	
	\begin{proof}
		\proofstep{1}{Reduction to a fixed interval.}
		
		\noindent\hspace*{2em}Rescale by $r=Rs$.
		For $R\in[a,b]$,
		\[
		d\mu_{m,R}(Rs) =\frac{R^{m+1}s^m e^{-\frac{R^2s^2}{2}}}{W_{m,R}}\,ds, \qquad T v(Rs)=RsV(s)-\frac1R V'(s),\quad V(s)=v(Rs).
		\]
		Thus the pulled-back measures are uniformly equivalent to $s^m\,ds$, and their densities depend continuously on $R$.
		
		\proofstep{2}{Continuity of the pulled-back profiles.}
		
		\noindent\hspace*{2em}Suppose first that $m>1$.
		Pulling \eqref{eq:J-functional} back to $(0,1)$ gives strictly convex quadratic functionals whose bilinear parts are uniformly coercive and whose bilinear and linear parts depend continuously on $R$ in operator norm on $H^1((0,1),s^m\,ds)$.
		The parameter-dependence argument in the proof of Lemma~\ref{lem:envelope} shows that their unique minimizers depend continuously on $R$.
		Formula \eqref{eq:g-fixed-v} then shows that $R\mapsto g_{m,R}(R\,\cdot)$ is continuous in $L^2((0,1),s^m\,ds)$.
		
		For $m=1$, Lemma~\ref{lem:first-order-endpoint} gives
		\[
		z(Rs)=e^{\frac{R^2s^2}{2}} \Bigl(C-\int_0^{Rs}e^{-\frac{t^2}{2}}\,dt\Bigr).
		\]
		Its energy is a strictly convex quadratic polynomial in $C$, whose coefficients depend continuously on $R$.
		Hence the minimizing coefficient $C_{1,R}$, and therefore $z_{1,R}(R\,\cdot)$, depend continuously on $R$.
		The same is true of
		\[
		\mathcal Z_{1,R}(Rs) =\frac R s\int_0^s t z_{1,R}(Rt)\,dt, \qquad g_{1,R}(Rs)=\frac{2-\mathcal Z_{1,R}(Rs)}{N_R},
		\]
		with the value at $s=0$ understood by continuity. Here $N_R\ge1$ and $N_R$ is continuous.
		This proves \eqref{eq:profile-L2-continuity} also at the endpoint.
		
		\proofstep{3}{The uniform $L^2$ bound.}
		
		\noindent\hspace*{2em}By Step~2 and compactness of $[a,b]$,
		\[
		\sup_{R\in[a,b]} \|g_{m,R}(R\,\cdot)\|_{L^2((0,1),s^m\,ds)}<\infty.
		\]
		The uniform equivalence of the pulled-back measures from Step~1 now gives \eqref{eq:profile-uniform-L2}.
		
		\proofstep{4}{A jointly measurable realization.}
		
		\noindent\hspace*{2em}The source constructed in Theorem~\ref{thm:sharp-local} uses the profile with $R=\rho_K(\theta)$.
		Since Step~2 gives only an $L^2$-equivalence class for each $R$, we must choose actual functions simultaneously so that their values are measurable in both $R$ and $r$.
		
		To distinguish the $L^2$-class from its pointwise representative, write
		\[
		\mathcal F(R) =\bigl[s\longmapsto g_{m,R}(Rs)\bigr] \in L^2((0,1),s^m\,ds).
		\]
		By Step~2, $\mathcal F:[a,b]\to L^2((0,1),s^m\,ds)$ is continuous.
		Hence $\mathcal F([a,b])$ is compact.
		
		For every $k\ge1$, choose a finite $4^{-k}$-net of this compact set and assign each $\mathcal F(R)$ to the first net point within distance $4^{-k}$.
		Since the distance from $\mathcal F(R)$ to each net point is continuous in $R$, this defines a finite-valued Borel map $\mathcal F_k:[a,b]\to L^2((0,1),s^m\,ds)$ such that
		\[
		\sup_{R\in[a,b]} \|\mathcal F_k(R)-\mathcal F(R)\|_2\le4^{-k}.
		\]
		Choosing a Borel representative of each of the finitely many values of $\mathcal F_k$ gives a Borel function $G_k(R,s)$ representing $\mathcal F_k(R)$ for every $R$.
		
		All norms in the following display are taken with respect to $s^m\,ds$.
		Since this measure is finite, for every fixed $R$,
		\[
		\sum_{k=1}^\infty \|G_{k+1}(R,\cdot)-G_k(R,\cdot)\|_1 \le \frac1{\sqrt{m+1}} \sum_{k=1}^\infty(4^{-k-1}+4^{-k}) <\infty.
		\]
		By Tonelli's theorem, $G_k(R,s)$ converges for almost every $s$.
		Let $G(R,s)$ be this limit, setting it equal to zero where the limit does not exist.
		The convergence set is Borel, so $G$ is a Borel function on $[a,b]\times(0,1)$.
		Moreover, $G_k(R,\cdot)\to G(R,\cdot)$ in $L^1$, while $\mathcal F_k(R)\to\mathcal F(R)$ in $L^2$ and hence in $L^1$.
		Uniqueness of the $L^1$ limit shows that $G(R,\cdot)$ represents $\mathcal F(R)$ for every $R$.
		
		Setting
		\[
		g_{m,R}(r)=G\bigl(R,\frac rR\bigr),\qquad 0<r<R,
		\]
		gives the required representatives, and they are Borel measurable in $(R,r)$ because $(R,r)\mapsto\bigl(R,\frac rR\bigr)$ is continuous.
	\end{proof}
	
	We can now assemble the profiles along the rays of $K$.
	
	\begin{theorem}\label{thm:sharp-local}
		Let $K\in\mathcal K_{+,o}^n$, $m=n-1$ and $f\in C^1(\partial K)$.
		There is a right-hand side $g\in L^2(K,\gamma_n)$ satisfying the compatibility condition \eqref{eq:compatibility} such that the mean-zero solution $u\in H^2(K)$ of the Gaussian Neumann problem satisfies
		\begin{equation}\label{eq:sharp-energy-K}
			\int_K\bigl(\|\Hess u\|_{\HS}^2+|\nabla u|^2\bigr)d\gamma_n -\int_K(g-\bar g_K)^2\,d\gamma_n \ge\kappa_m\gamma_n(K)\bar g_K^2.
		\end{equation}
		Here $\bar g_K=\gamma_n(K)^{-1}\int_Kg\,d\gamma_n$.
	\end{theorem}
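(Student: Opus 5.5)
The plan is to build $g$ ray by ray from the one-dimensional maximizers and then integrate the pointwise bound of Lemma~\ref{lem:polar-hessian} in polar coordinates. Put $c=\int_{\partial K}f\,d\gamma_{\partial K}$. If $c=0$, any $g$ with $\int_K g\,d\gamma_n=0$ works trivially: take $g=0$, so that $\bar g_K=0$ and the left side of \eqref{eq:sharp-energy-K} is nonnegative. For $c\neq0$ we look for $g$ of the form
\[
g(r\theta)=\beta\,g_{m,\rho_K(\theta)}(r),
\]
with the profiles chosen jointly Borel by Lemma~\ref{lem:profile-dependence}, applied with $a=\min\rho_K$ and $b=\max\rho_K$. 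The measurability of $(\theta,r)\mapsto g(r\theta)$ follows from the continuity of $\rho_K$. The bound $g\in L^2(K,\gamma_n)$ follows from \eqref{eq:profile-uniform-L2} and polar integration, since
\[
\int_K g^2\,d\gamma_n=(2\pi)^{-n/2}\beta^2\int_{\Sph^{n-1}}W_{m,\rho_K(\theta)}\,\E_{m,\rho_K(\theta)}g_{m,\rho_K(\theta)}^2\,d\theta.
\]
Because $\E_{m,R}g_{m,R}=1$ on each ray, polar integration gives
\[
\int_Kg\,d\gamma_n=\beta\,(2\pi)^{-n/2}\int_{\Sph^{n-1}}W_{m,\rho_K(\theta)}\,d\theta=\beta\gamma_n(K).
\]
We therefore choose $\beta=c/\gamma_n(K)$, which gives compatibility and $\bar g_K=\beta$. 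Lemma~\ref{lem:gaussian-neumann} then provides the mean-zero $H^2$ solution $u$.

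Next apply Lemma~\ref{lem:polar-hessian} on $K^\circ$ and integrate in polar coordinates $x=r\theta$, with $d\gamma_n=(2\pi)^{-n/2}w_m(r)\,dr\,d\theta$. For a.e.\ $\theta$, the restriction $v_\theta(r)=u(r\theta)$ satisfies $u_r=v_\theta'$ and $u_{rr}=v_\theta''$, and $g+ru_r-u_{rr}=g+r v_\theta'-v_\theta''$. Here we must match the operator $T$: set $\phi=v_\theta'$, so that $ru_r-u_{rr}=r\phi-\phi'=T\phi$. The radial terms become $\phi'^2+\phi^2+(g+T\phi)^2/m$, exactly the integrand of $\Phi_{m,R}$ with $v=\phi$. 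Dividing by $\beta$ and using homogeneity (replace $\phi$ by $\beta\tilde\phi$), \eqref{eq:profile-inequality} gives, for each ray with $R=\rho_K(\theta)$,
\[
\E_{m,R}\Bigl[\phi'^2+\phi^2+\tfrac{(g+T\phi)^2}{m}\Bigr]-\E_{m,R}(g-\beta)^2\ge\kappa_m\beta^2,
\]
provided $\phi\in H^1((0,R),w_m\,dr)$. Multiplying by $W_{m,R}$ and integrating over $\theta$ yields \eqref{eq:sharp-energy-K}, since $\int_{\Sph^{n-1}}W_{m,\rho_K}\,d\theta\cdot(2\pi)^{-n/2}=\gamma_n(K)$ and $\bar g_K=\beta$.

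The main obstacle is the technical justification of this raywise step. We need that for a.e.\ $\theta$ the function $r\mapsto\partial_r u(r\theta)$ lies in $H^1((0,\rho_K(\theta)),w_m\,dr)$ with derivatives equal to the a.e.\ radial derivatives of $u$. We also need that $Lu=g$ holds a.e.\ along a.e.\ ray. I would obtain this from $u\in H^2(K)$: Fubini in polar coordinates shows that $\int_{\Sph^{n-1}}\int_0^{\rho_K}(|\nabla u|^2+|\Hess u|^2)w_m\,dr\,d\theta<\infty$, and this is the needed weighted finiteness on a.e.\ ray. Absolute continuity on a.e.\ ray follows by approximating $u$ in $H^2$ by smooth functions and passing to a subsequence that converges on a.e.\ ray, which is the standard ACL argument in polar coordinates away from the origin. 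The origin is a null set and is harmless because the weight $w_m$ already encodes the polar Jacobian. With these points checked, the pointwise inequality integrates along each ray, and no boundary condition is needed at $r=\rho_K(\theta)$, because \eqref{eq:profile-inequality} holds for every $v$ in the weighted $H^1$ space.
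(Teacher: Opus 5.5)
Your proposal is correct and follows essentially the same route as the paper. You use the same raywise source $g(r\theta)=s_f\,g_{m,\rho_K(\theta)}(r)$ with $s_f=\gamma_n(K)^{-1}\int_{\partial K}f\,d\gamma_{\partial K}$, and you then apply \eqref{eq:profile-inequality} to $u_r(\cdot\,\theta)/s_f$ on almost every ray after integrating Lemma~\ref{lem:polar-hessian} in polar coordinates. Your smooth-approximation (ACL) justification of $u_r(\cdot\,\theta)\in H^1((0,\rho_K(\theta)),w_m\,dr)$ with derivative $u_{rr}$ is only a cosmetic variant of the paper's Sobolev slicing on annuli.
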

	
	\begin{proof}
		Set
		\[
		s_f=\frac1{\gamma_n(K)} \int_{\partial K}f\,d\gamma_{\partial K},
		\]
		and, using the measurable representatives supplied by Lemma~\ref{lem:profile-dependence}, define the desired source $g_K$ in polar coordinates by
		\[
		g_K(r\theta)=s_f\, g_{m,\rho_K(\theta)}(r), \qquad 0<r<\rho_K(\theta).
		\]
		Since $\rho_K$ is continuous, the measurability established in Lemma~\ref{lem:profile-dependence} shows that $g_K$ is measurable.
		Polar coordinates also give
		\[
		\gamma_n(K)=(2\pi)^{-\frac n2} \int_{\Sph^{n-1}}W_{m,\rho_K(\theta)}\,d\theta.
		\]
		Consequently, the normalization $\E_{m,R}g_{m,R}=1$ gives
		\[
		\int_K g_K\,d\gamma_n=(2\pi)^{-\frac n2}s_f\int_{\Sph^{n-1}}W_{m,\rho_K(\theta)}\,d\theta=s_f\gamma_n(K)=\int_{\partial K}f\,d\gamma_{\partial K}.
		\]
		Thus $g_K$ is compatible with the Neumann datum $f$, and $\bar g_K=s_f$.
		
		Since $0\in\operatorname{int}K$, there are $0<a\le b<\infty$ such that $a\le\rho_K(\theta)\le b$.
		Lemma~\ref{lem:profile-dependence} gives
		\[
		\int_K g_K^2\,d\gamma_n=(2\pi)^{-\frac n2}s_f^2 \int_{\Sph^{n-1}}W_{m,\rho_K(\theta)} \E_{m,\rho_K(\theta)}g_{m,\rho_K(\theta)}^2\,d\theta <\infty.
		\]
		
		Let $u\in H^2(K)$ be the Gaussian mean-zero solution of
		\[
		Lu=g_K\quad\text{in }K,\qquad u_{\nu_K}=f\quad\text{on }\partial K.
		\]
		Its existence and regularity follow from Lemma~\ref{lem:gaussian-neumann}.
		Since $u\in H^2(K)$, the bounds $|u_r|\le|\nabla u|$ and $|u_{rr}|\le\|\Hess u\|_{\HS}$, together with polar coordinates and Fubini's theorem, show that $u_r(\,\cdot\,\theta)$ and $u_{rr}(\,\cdot\,\theta)$ belong to $L^2((0,\rho_K(\theta)),w_m\,dr)$ for almost every $\theta$.
		On $K\setminus\{0\}$, the identity $u_r=\frac{x}{|x|}\cdot\nabla u$ shows that $u_r\in H^1_{\mathrm{loc}}$.
		Sobolev slicing on a countable exhaustion of $K\setminus\{0\}$ by annuli identifies the weak radial derivative of $u_r$ with $u_{rr}$.
		Hence, by the definition in Subsection~\ref{ssec:functional-tools}, for almost every $\theta$,
		\[
		r\longmapsto u_r(r\theta) \in H^1((0,\rho_K(\theta)),w_m\,dr), \qquad \partial_r[u_r(r\theta)]=u_{rr}(r\theta).
		\]
		For such a direction, the one-dimensional operator in \eqref{eq:profile-inequality} acts along the ray:
		\[
		T\bigl(u_r(\,\cdot\,\theta)\bigr)(r) =ru_r(r\theta)-\partial_r[u_r(r\theta)] =ru_r(r\theta)-u_{rr}(r\theta).
		\]
		If $s_f\ne0$, apply \eqref{eq:profile-inequality} to $r\mapsto\frac{u_r(r\theta)}{s_f}$ and multiply by $s_f^2$; if $s_f=0$, then $g_K=0$ and the same raywise lower bound follows from the nonnegativity of the remaining quadratic energy.
		Lemma~\ref{lem:polar-hessian} gives the first inequality below, while the raywise profile inequality gives the second:
		\begin{align*}
			&\int_K\bigl(\|\Hess u\|_{\HS}^2+|\nabla u|^2\bigr)d\gamma_n-\int_K(g_K-s_f)^2\,d\gamma_n\\
			\ge{}&(2\pi)^{-\frac n2}\int_{\Sph^{n-1}}\int_0^{\rho_K(\theta)}\Bigl[u_{rr}^2+u_r^2+\frac{(g_K+ru_r-u_{rr})^2}{m}-(g_K-s_f)^2\Bigr]w_m(r)\,dr\,d\theta\\
			\ge{}&(2\pi)^{-\frac n2}\kappa_m s_f^2\int_{\Sph^{n-1}}W_{m,\rho_K(\theta)}\,d\theta\\
			={}&\kappa_m\gamma_n(K)s_f^2=\kappa_m\gamma_n(K)\bar g_K^2.
		\end{align*}
		This is \eqref{eq:sharp-energy-K}.
	\end{proof}
	
	\begin{proof}[Proof of Theorem~\ref{thm:main}]
		We prove the lower bound first for smooth strictly convex bodies and then for arbitrary convex bodies by Hausdorff approximation. Lemma~\ref{lem:sharpness} gives the reverse bound and completes the proof.
		
		\proofstep{1}{The lower bound for smooth bodies.}
		
		\noindent\hspace*{2em}Set $m=n-1$.
		For $X\sim\chi_m$, the identities $\E X=\sqrt2\,\frac{\Gamma(\frac{m+1}{2})}{\Gamma(\frac m2)}$ and $\E X^2=m$ give $q_n=\kappa_m=\frac{\Var(X)}m>0$.
		Theorem~\ref{thm:sharp-local} and Lemma~\ref{lem:nonconstant-criterion}, applied with $q=q_n$, prove the desired inequality for $K,L\in\mathcal K_{+,o}^n$.
		
		\proofstep{2}{Pass to arbitrary convex bodies.}
		
		\noindent\hspace*{2em}Let $K,L\in\mathcal K_o^n$.
		By the standard smoothing of support functions \cite{Sch}, choose $K_j,L_j\in\mathcal K_{+,o}^n$ converging to $K,L$, respectively, in the Hausdorff metric.
		Such approximations may be obtained by first adding $j^{-1}B_2^n$ and then smoothing the support function by an approximate identity on the rotation group; the added ball keeps the support function positive and its curvature matrix positive definite.
		For each $t\in[0,1]$, continuity of Minkowski addition gives
		\[
		(1-t)K_j+tL_j\longrightarrow(1-t)K+tL
		\]
		in the Hausdorff metric.
		The boundary of a convex body has Gaussian measure zero, so Hausdorff convergence of convex bodies implies convergence of their Gaussian measures.
		We may therefore pass to the limit in the inequality from Step~1 and obtain it for $K,L$.
		This proves $\alpha_\gamma(n)\ge q_n$.
		
		\proofstep{3}{The reverse bound.}
		
		\noindent\hspace*{2em}Lemma~\ref{lem:sharpness} gives $\alpha_\gamma(n)\le q_n$.
		This, together with Step~2, gives $\alpha_\gamma(n)=q_n$.
	\end{proof}
	
	\appendix
	
	\newpage
	
	\section{\bf Explicit optimal right-hand sides}
	\label{app:explicit-profiles}
	
	The saddle construction in Section~\ref{sec:one-dim} already proves the main result.
	Here we combine it with the Laguerre resolution from Subsection~\ref{ssec:radial-tools} only to record closed formulas for the maximizing profiles.
	
	\subsection{The profile for \texorpdfstring{$m>1$}{m greater than 1}}
	
	For $m>1$, the finite-interval solution is obtained by adding a regular homogeneous correction to the half-line resolvent.
	
	\begin{lemma}
		Let $m>1$, $R>0$, and $\nu=\frac{m+1}{2}$.
		\[
		z_{m,\infty}(r)=\frac{\Gamma(\nu-\frac12)}{2\sqrt2\,\Gamma(\nu)}
		\sum_{k=0}^\infty\frac{(\frac12)_k}{(\nu)_k(k+1)}L_k^{\nu-1}\Bigl(\frac{r^2}{2}\Bigr),
		\qquad
		\phi_m(r)={}_1F_1\Bigl(1;\nu;\frac{r^2}{2}\Bigr).
		\]
		The boundary-correction coefficient
		\[
		C_{m,R}=\frac{Rz_{m,\infty}(R)-1-mz_{m,\infty}'(R)}{m\phi_m'(R)-R\phi_m(R)}
		\]
		is well defined, and $z_{m,R}=z_{m,\infty}+C_{m,R}\phi_m$ is the solution of \eqref{eq:z-ode}--\eqref{eq:z-boundary} on the regular branch at the origin.
		For $r>0$, define
		\[
		\mathcal Z_{m,R}(r)=r^{-m}\int_0^r s^m z_{m,R}(s)\,ds,
		\qquad \mathcal Z_{m,R}(0)=0.
		\]
		Then
		\[
		\begin{aligned}
			\mathcal Z_{m,R}(r)={}&\frac{\Gamma(\nu-\frac12)}{4\sqrt2\,\Gamma(\nu)}\,r
			\sum_{k=0}^\infty\frac{(\frac12)_k}{(\nu)_k(k+1)(k+\nu)}L_k^\nu\Bigl(\frac{r^2}{2}\Bigr)\\
			&+C_{m,R}\frac r{2\nu}{}_1F_1\Bigl(1;\nu+1;\frac{r^2}{2}\Bigr).
		\end{aligned}
		\]
		The maximizing profile is
		\begin{equation}\label{eq:g-m-explicit}
			g_{m,R}(r)=\frac{\dfrac{m+1}{m}-\mathcal Z_{m,R}(r)}{\dfrac{m+1}{m}-\E_{m,R}\mathcal Z_{m,R}},\qquad 0\le r\le R.
		\end{equation}
	\end{lemma}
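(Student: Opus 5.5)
The plan is to derive all four claims from three ingredients: the half-line resolvent of Lemma~\ref{lem:laguerre-resolvent}, an explicit regular homogeneous solution, and the saddle formulas of Lemma~\ref{lem:saddle-reduction}.

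\textbf{The half-line solution.} Step~4 of the proof of Lemma~\ref{lem:half-line-value} already gives $\widehat F_k$ for $F(s)=(2s)^{-1/2}$. Inserting these coefficients into \eqref{eq:laguerre-resolvent-app} with $s=r^2/2$ reproduces the stated series for $z_{m,\infty}$, once the prefactor $\frac12\cdot\frac1{\sqrt2}\frac{\Gamma(\nu-\frac12)}{\Gamma(\nu)}$ is collected.

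\textbf{The homogeneous solution $\phi_m$.} Next I check that $\phi_m(r)={}_1F_1(1;\nu;r^2/2)$ solves $(D^*D+2)\phi=0$ on the regular branch. In the variable $s$ this equation reads
\[
s\Phi''+(\nu-s)\Phi'-\Phi=0,
\]
which is Kummer's equation with parameters $a=1$, $b=\nu$. Its solution ${}_1F_1(1;\nu;s)$ is analytic at $0$. It grows like $e^{s}s^{1-\nu}$, which is harmless on $(0,R)$. Since $z_{m,\infty}$ is regular at $0$, so is $z_{m,\infty}+C\phi_m$ for any constant $C$, and it solves \eqref{eq:z-ode}. Imposing \eqref{eq:z-boundary} yields the stated formula for $C_{m,R}$, provided the denominator $m\phi_m'(R)-R\phi_m(R)$ is nonzero. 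If it vanished, then $\phi_m$ would be a nonzero regular solution of the homogeneous problem with $m\psi'(R)=R\psi(R)$. The uniqueness computation in Step~3 of Lemma~\ref{lem:saddle-reduction} rules this out. Uniqueness of $z_{m,R}$ then identifies $z_{m,R}=z_{m,\infty}+C_{m,R}\phi_m$.

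\textbf{The formula for $\mathcal Z_{m,R}$.} By linearity it suffices to integrate each term separately. For the Laguerre terms I use
\[
\frac{d}{ds}\bigl(s^{\nu}L_k^{\nu}(s)\bigr)=(k+\nu)s^{\nu-1}L_k^{\nu-1}(s).
\]
This gives $\int_0^r t^m L_k^{\nu-1}(t^2/2)\,dt=\frac{2^{\nu}}{k+\nu}\,s^\nu L_k^\nu(s)$. Dividing by $r^m=2^{\nu-1/2}s^{\nu-1/2}$ then produces the factor $\frac{r}{2(k+\nu)}L_k^\nu$. For the ${}_1F_1$ term, integrating termwise gives $(1)_j/(\nu)_j\cdot 1/(2j+m+1)$. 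This rearranges to $\frac{r}{2\nu}\,{}_1F_1(1;\nu+1;r^2/2)$ because $(\nu)_j(j+\nu)=\nu(\nu+1)_j$. Termwise integration of the Laguerre series must be justified, since it converges only in $L^2$. The map $z\mapsto r^{-m}\int_0^r t^m z\,dt$ is continuous from $L^2(w_m\,dr)$ into pointwise values on compacts of $(0,R]$ by Cauchy--Schwarz, so the interchange is legitimate.

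\textbf{The profile formula.} Finally I rewrite \eqref{eq:g-optimal}. Integration by parts gives
\[
\int_0^r t^m(tz-z')\,dt=-r^mz(r)+(m+1)\int_0^r t^mz\,dt,
\]
with no boundary term at $0$ by regularity. I intend to express $Tz$ through $\mathcal Z$ pointwise, via $T^*\mathcal Z$-type identities analogous to the case $m=1$. I expect this to give $Tz=m+1-\ldots$ only after an affine renormalization, so that $\frac{m-Tz}{m-H_m(R)}$ takes the form \eqref{eq:g-m-explicit}.

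This last step is the main obstacle. The displayed formula has $\mathcal Z$ where \eqref{eq:g-optimal} has $Tz$, so a direct pointwise identity is unlikely. More plausibly, one must show that $\frac{m+1}{m}-\mathcal Z$ is itself an optimal source. Optimal sources need not be unique: adding anything orthogonal in the relevant sense leaves $\Phi(g,v_*)$ unchanged only if the variation vanishes. So I would first try to verify directly that $g=c(\frac{m+1}{m}-\mathcal Z)$ together with $v_*$ forms a saddle. This uses $T^*\mathcal Z=z$, the Euler equation, and the boundary condition, mimicking the $m=1$ construction in Lemma~\ref{lem:m-one}. The normalization $\E g=1$ then fixes the denominator.
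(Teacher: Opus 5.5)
\textbf{Gap: the final step (the profile formula) is missing.} Your treatment of $z_{m,\infty}$, $\phi_m$ and $\mathcal Z_{m,R}$ essentially follows the paper. Your use of the uniqueness computation of Lemma~\ref{lem:saddle-reduction} to show $m\phi_m'(R)-R\phi_m(R)\neq0$ is a valid substitute for the paper's Euler-integral argument. In your Laguerre primitive, $2^{\nu}$ should be $2^{\nu-1}$, although your final factor $\frac{r}{2(k+\nu)}$ is right.

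You leave the last step open, and your diagnosis of it is wrong: there is a direct pointwise identity. Write \eqref{eq:z-ode} as $z''=(r-\frac mr)z'+2z-\frac1r$. Then one computes
\[
\Bigl[r^m\Bigl(Tz_{m,R}-\frac1m\Bigr)\Bigr]'=(m-1)\,r^m z_{m,R}.
\]
By Step~1 of Lemma~\ref{lem:boundary-estimates}, $Tz_{m,R}(r)\to\frac1m$ as $r\downarrow0$, so integrating from $0$ gives $Tz_{m,R}=\frac1m+(m-1)\mathcal Z_{m,R}$. Hence $m-Tz_{m,R}=(m-1)\bigl(\frac{m+1}{m}-\mathcal Z_{m,R}\bigr)$ and $m-H_m(R)=(m-1)\bigl(\frac{m+1}{m}-\E_{m,R}\mathcal Z_{m,R}\bigr)$. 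Substituting into \eqref{eq:g-optimal} gives \eqref{eq:g-m-explicit}. This identity must use the ODE, so integration by parts alone cannot produce it. Your integration by parts is also miscomputed: in fact $\int_0^r t^m(tz-z')\,dt=\int_0^r t^{m+1}z\,dt-r^mz(r)+m\int_0^r t^{m-1}z\,dt$.

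\textbf{The saddle fallback does not bypass this identity.} For $m>1$ the optimal source is unique. On $\E_{m,R}g=1$, the $g$-quadratic part of $\Phi_{m,R}(g,v)$ is $(\frac1m-1)\E_{m,R}g^2$, independent of $v$, so $g\mapsto\inf_v\Phi_{m,R}(g,v)$ is strictly concave. Moreover, by Step~1 of Lemma~\ref{lem:saddle-reduction}, the unique maximizer of $g\mapsto\Phi_{m,R}(g,v_*)$ is the affine function of $Tv_*$ in \eqref{eq:g-fixed-v}. So the second saddle inequality for $c(\frac{m+1}{m}-\mathcal Z_{m,R})$ holds only if $\mathcal Z_{m,R}$ is affine in $Tz_{m,R}$, which is exactly the identity you called unlikely. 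The $m=1$ construction of Lemma~\ref{lem:m-one} avoids this only because there $Tv_*$ is constant, so every mean-one source maximizes $\Phi_{1,R}(\cdot,v_*)$. That degeneracy is absent for $m>1$.
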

	
	\begin{proof}
		We divide the proof into four steps.
		
		\proofstep{1}{The half-line particular solution.}

		Under the unitary map $U$, the function $r^{-1}$ becomes $F(s)=(2s)^{-\frac12}$.
		Lemma~\ref{lem:standard-laguerre-facts} gives its Laguerre coefficients
		\[
		\widehat F_k=\frac{\Gamma(\nu-\frac12)}{\sqrt2\,\Gamma(\nu)}\frac{(\frac12)_k}{(\nu)_k}.
		\]
		Lemma~\ref{lem:laguerre-resolvent} therefore gives the displayed series for $z_{m,\infty}=(D^*D+2)^{-1}(r^{-1})$.
		The series converges in the graph norm of $D^*D$ and in $H^2$ on compact subintervals of $(0,\infty)$, so it may be differentiated there term by term.
		It is also absolutely convergent at $r=0$ because $L_k^{\nu-1}(0)=\frac{(\nu)_k}{k!}$.
		
		\proofstep{2}{The regular correction.}

		Under $s=\frac{r^2}{2}$, the homogeneous equation associated with $D^*D+2$ becomes Kummer's equation
		\[
		s\phi''+(\nu-s)\phi'-\phi=0.
		\]
		Its normalized regular solution is $\phi_m(r)={}_1F_1(1;\nu;\frac{r^2}{2})$; see \cite[\S\S13.2(i) and 13.4(i)]{NISTDLMF}.
		Euler's integral representation gives
		\[
		\phi_m(r)=(\nu-1)\int_0^1e^{\frac{r^2t}{2}}(1-t)^{\nu-2}\,dt
		\]
		and hence
		\[
		\frac{\phi_m'(R)}{R\phi_m(R)}
		=\frac{\int_0^1t e^{\frac{R^2t}{2}}(1-t)^{\nu-2}\,dt}{\int_0^1e^{\frac{R^2t}{2}}(1-t)^{\nu-2}\,dt}
		>\frac1\nu>\frac1m.
		\]
		The first inequality follows because the exponential tilt strictly increases the mean of the beta density.
		Thus $m\phi_m'(R)-R\phi_m(R)>0$.
		Imposing \eqref{eq:z-boundary} on $z_{m,\infty}+C\phi_m$ gives $C=C_{m,R}$, and uniqueness in Lemma~\ref{lem:saddle-reduction} identifies the resulting function with $z_{m,R}$.
		
		\proofstep{3}{The radial primitive.}

		The identities
		\begin{align*}
			\int_0^r s^m L_k^{\nu-1}\Bigl(\frac{s^2}{2}\Bigr)\,ds&=\frac{r^{m+1}}{2(k+\nu)}L_k^\nu\Bigl(\frac{r^2}{2}\Bigr),\\
			\int_0^r s^m {}_1F_1\Bigl(1;\nu;\frac{s^2}{2}\Bigr)\,ds&=\frac{r^{m+1}}{2\nu}{}_1F_1\Bigl(1;\nu+1;\frac{r^2}{2}\Bigr)
		\end{align*}
		give the displayed series for $\mathcal Z_{m,R}$; the passage from the Laguerre partial sums to the integral follows from weighted $L^2$ convergence.
		A direct calculation using \eqref{eq:z-ode} gives
		\[
		\Biggl[r^m\Bigl(Tz_{m,R}-\frac1m\Bigr)\Biggr]' =(m-1)r^m z_{m,R}.
		\]
		The regularity at the origin removes the lower boundary term, so integration gives
		\[
		Tz_{m,R}(r)=\frac1m+(m-1)\mathcal Z_{m,R}(r).
		\]
		
		\proofstep{4}{Recovery of the optimizer.}

		Substituting the last identity into \eqref{eq:g-optimal} gives \eqref{eq:g-m-explicit}.
		Its denominator equals $\frac{m-H_m(R)}{m-1}>0$, and it is the expectation of the numerator, so $\E_{m,R}g_{m,R}=1$.
		Finally, the integral definition of $\mathcal Z_{m,R}$ gives $\mathcal Z_{m,R}(r)=O(r)$ at the origin.
	\end{proof}
	
	\subsection{The profile for \texorpdfstring{$m=1$}{m equal to 1}}
	
	For $m=1$, the constraint $Tz=1$ can be solved directly, so no Laguerre expansion is needed.
	
	\begin{lemma}
		Fix $R>0$ and put
		\[
		\Psi(r)=\int_0^r e^{-\frac{s^2}{2}}\,ds, \qquad \Psi_R=\Psi(R),
		\]
		\[
		C_{1,R}=\frac{e^{\frac{R^2}{2}}(R^2-1)\Psi_R+R}{e^{\frac{R^2}{2}}(R^2-1)+1},
		\qquad z_{1,R}(r)=e^{\frac{r^2}{2}}\bigl(C_{1,R}-\Psi(r)\bigr).
		\]
		For $r>0$, define
		\[
		\mathcal Z_{1,R}(r)=\frac1r\int_0^r s z_{1,R}(s)\,ds,
		\qquad \mathcal Z_{1,R}(0)=0.
		\]
		Then
		\[
		\mathcal Z_{1,R}(r)=1+\frac{z_{1,R}(r)-C_{1,R}}r
		\]
		and
		\[
		N_R:=1+\E_{1,R}(z_{1,R}'{}^2+z_{1,R}^2)
		=2-\E_{1,R}\mathcal Z_{1,R}
		=\frac{R\Psi_R+C_{1,R}(\Psi_R-R)}{W_{1,R}}>0.
		\]
		The optimal profile is
		\begin{equation}\label{eq:g-one-simplified}
			g_{1,R}(r)=\frac{1+\dfrac{C_{1,R}-z_{1,R}(r)}{r}}{N_R}\quad(0<r\le R),\qquad g_{1,R}(0)=\frac2{N_R}.
		\end{equation}
		Moreover, $\E_{1,R}g_{1,R}=1$.
	\end{lemma}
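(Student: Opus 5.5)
The proof will be a direct computation. Everything hinges on the constrained minimizer $z_{1,R}$ from the proof of Lemma~\ref{lem:m-one}. That proof already gives
\[
z_{1,R}=e^{\frac{r^2}{2}}(C-\Psi(r)),
\]
together with the natural boundary condition $z'(R)+1-\mathcal Z_{1,R}(R)=0$, the identity $N_R=2-\E_{1,R}\mathcal Z_{1,R}$, and the formula $g_{1,R}=(2-\mathcal Z_{1,R})/N_R$ with $\E_{1,R}g_{1,R}=1$. The task is therefore to evaluate $\mathcal Z_{1,R}$ in closed form, solve for $C$, and simplify.

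\textbf{Step 1: closed form of $\mathcal Z_{1,R}$.} Since $Tz=1$, we have $z'=rz-1$, so $sz(s)=z'(s)+1$. Integrating from $0$ to $r$ gives
\[
\int_0^r sz\,ds=z(r)-z(0)+r=z(r)-C+r .
\]
Dividing by $r$ yields $\mathcal Z_{1,R}(r)=1+\frac{z(r)-C}{r}$. Then $2-\mathcal Z_{1,R}=1+\frac{C-z(r)}{r}$, and together with $g_{1,R}=(2-\mathcal Z_{1,R})/N_R$ this gives \eqref{eq:g-one-simplified}. The value at $0$ follows from $z'(0)=-1$, which gives $\frac{C-z(r)}{r}\to1$, so $g_{1,R}(0)=2/N_R$.

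\textbf{Step 2: determine $C$.} Substitute into the boundary condition $z'(R)+1-\mathcal Z_{1,R}(R)=0$. Using $z'(R)=Rz(R)-1$ and the formula from Step~1, it becomes
\[
Rz(R)-1-\frac{z(R)-C}{R}=0,
\]
that is, $(R^2-1)z(R)+C=R$. Insert $z(R)=e^{\frac{R^2}{2}}(C-\Psi_R)$ and solve the resulting linear equation for $C$; this gives exactly $C_{1,R}$. The denominator $e^{\frac{R^2}{2}}(R^2-1)+1$ is positive, because $e^{x}(2x-1)+1>0$ for $x=\frac{R^2}{2}>0$: the function vanishes at $0$ and has derivative $e^x(2x+1)>0$.

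\textbf{Step 3: formula for $N_R$.} We compute
\[
W_{1,R}\,\E_{1,R}\mathcal Z_{1,R}=\int_0^R\bigl(r+z-C\bigr)e^{-\frac{r^2}{2}}\,dr .
\]
Here $\int_0^R re^{-\frac{r^2}{2}}\,dr=W_{1,R}$ and $\int_0^R Ce^{-\frac{r^2}{2}}\,dr=C\Psi_R$. For the middle term,
\[
\int_0^R ze^{-\frac{r^2}{2}}\,dr=\int_0^R (C-\Psi)\,dr=CR-\int_0^R\Psi\,dr,
\]
and integrating by parts, $\int_0^R\Psi\,dr=R\Psi_R-W_{1,R}$. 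Combining these, $N_R=2-\E_{1,R}\mathcal Z_{1,R}$ gives
\[
N_R=\frac{R\Psi_R+C(\Psi_R-R)}{W_{1,R}} .
\]
Positivity holds because $N_R\ge1$ by definition. Finally, $\E_{1,R}g_{1,R}=1$ either is inherited from Lemma~\ref{lem:m-one} or follows immediately from $N_R=2-\E_{1,R}\mathcal Z_{1,R}$.

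The main obstacle is the algebra in Step~2. There one must make sure that the boundary condition used is the one derived in Lemma~\ref{lem:m-one}, including its sign conventions, and that the minimizer is determined by it. This is guaranteed because the energy is strictly convex in the single parameter $C$.
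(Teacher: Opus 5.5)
Your proof is correct and follows essentially the same route as the paper. The paper gets $C_{1,R}$ from stationarity of the energy in $C$, namely $R^2z(R)=\int_0^R rz\,dr$, while you use the natural boundary condition $z'(R)+1-\mathcal Z_{1,R}(R)=0$ recorded in Lemma~\ref{lem:m-one}; both reduce to $(R^2-1)z(R)+C=R$, and your computations of $\mathcal Z_{1,R}$, $N_R$ (positive since $N_R\ge1$), and $\E_{1,R}g_{1,R}=1$ match the paper's.
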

	
	\begin{proof}
		We divide the proof into three steps.
		
		\proofstep{1}{The constrained minimizer.}

		Lemma~\ref{lem:first-order-endpoint} gives every solution of $Tz=1$ in the form $z(r)=e^{\frac{r^2}{2}}(C-\Psi(r))$.
		Its energy is a strictly convex quadratic function of $C$, and its critical point satisfies
		\[
		0=\int_0^R r(rz_{1,R}'+z_{1,R})\,dr =R^2z_{1,R}(R)-\int_0^R r z_{1,R}(r)\,dr.
		\]
		Substitution of the preceding form of $z$ gives the stated value of $C_{1,R}$.
		Its denominator is
		\[
		e^{\frac{R^2}{2}}(R^2-1)+1=\int_0^R r(r^2+1)e^{\frac{r^2}{2}}\,dr>0,
		\]
		so this critical point is the unique constrained minimizer.
		
		\proofstep{2}{The radial primitive and normalization.}

		Since $z_{1,R}'=rz_{1,R}-1$, integration gives
		\[
		\mathcal Z_{1,R}(r)=1+\frac{z_{1,R}(r)-C_{1,R}}r.
		\]
		This also shows that $\mathcal Z_{1,R}(r)\to0$ as $r\downarrow0$.
		Lemma~\ref{lem:m-one} gives $N_R=2-\E_{1,R}\mathcal Z_{1,R}$, and direct substitution yields
		\[
		N_R=\frac{R\Psi_R+C_{1,R}(\Psi_R-R)}{W_{1,R}}.
		\]
		
		\proofstep{3}{Recovery of the optimizer.}

		Lemma~\ref{lem:m-one} gives $g_{1,R}=(2-\mathcal Z_{1,R})/N_R$.
		The formula for $\mathcal Z_{1,R}$ now gives \eqref{eq:g-one-simplified}, including the value at the origin.
		Finally, $N_R=2-\E_{1,R}\mathcal Z_{1,R}$ gives $\E_{1,R}g_{1,R}=1$.
	\end{proof}

\end{document}